\theoremstyle{plain}
\newtheorem{theorem}{Theorem}[section]
\newtheorem{lemma}[theorem]{Lemma}
\newtheorem{proposition}[theorem]{Proposition}
\newtheorem{corollary}[theorem]{Corollary}
\newtheorem{remark}[theorem]{Remark}
\theoremstyle{definition}
\newtheorem{definition}[theorem]{Definition}
\theoremstyle{remark}
\newcounter{a}\setcounter{a}{0} 
\newenvironment{cond}{\par \refstepcounter{a}  
 {\upshape \textbf{Cond~\thea}:}}{\par}
\newcommand{\C}{\textbf{Cond}}
\newcommand{\NN}{\mathbb{N}}
\newcommand{\ZZ}{\mathbb{Z}}
\newcommand{\LL}{\mathbb{L}}
\newcommand{\ind}[1]{\mathbf{1}_{#1}}
\newcommand{\esp}[2]{E_{#1} \left[ #2 \right]}
\newcommand{\cov}[2]{\text{Cov} \left(#1,#2\right)}
\newcommand{\var}[1]{\text{Var} \left( #1 \right)}
\newcommand{\prob}[1]{P\left( #1 \right)}
\newcommand{\PCA}[1]{\mathbf{#1}} 
\newcommand{\prive}[2]{#1 \setminus #2}
\newcommand{\eqd}{\stackrel{d}=}
\newcommand{\red}[1]{{\bf #1}}
\newcommand{\prt}[1]{\alpha_{#1}}
\newcommand{\HZ}[1]{\xi_{#1}}
\newcommand{\floor}[1]{\left \lfloor #1 \right \rfloor}
\newcommand{\mes}[1]{\mathcal{M}\left(#1\right)}
\newcommand{\tr}[1]{\Phi_\PCA{#1}}
\newcommand{\BC}[2]{\text{#1}(#2)}
\newcommand{\PM}[1]{\text{PM}\left(#1\right)}
\newcommand{\bern}[1]{\mathcal{B}\left(#1\right)}
\title{\Large \bf Edge correlation function of the 8-vertex model when $\bold{a+c=b+d}$}
\author{\bf Jérôme Casse \\ Univ. Lorraine, IECL \\ UMR 7502 \\ 54500 Vandoeuvre-lès-Nancy, France}
\date{}
\begin{document}
\maketitle

\begin{abstract}
This paper is devoted to the 8-vertex model and its edge correlation function. In some particular (integrable) cases, we find a closed form of the edge correlation function and we deduce also its asymptotic. In addition, we quantify influence of boundary conditions on this function.\par
To do this, we introduce a system of particles in interaction related to the 8-vertex model. This system, studied using various tools from analytic combinatorics, random walks and conics, permits to compute the correlation function. To study the influence of boundary conditions, we involve probabilistic cellular automata of order 2.\par
\medskip
{\sf Keywords: } 8-vertex model, correlation function, system of particles, probabilistic cellular automata.\par
{\sf AMS classification:} Primary 82B20, 82B31;  Secondary 60J10, 05A15.   
\end{abstract}

\section{Introduction}
\subsection*{Vertex-models}
We start with some formal definitions. Let $K_N$ be the graph whose set of vertices is 
\begin{displaymath}
V_{N} = \{(i,j): -1 \leq i,j \leq N+1\}
\end{displaymath}
and set of edges is
\begin{align*}
E_{N} = & \{((i,j),(i,j+1)): 0 \leq i \leq N, -1 \leq j \leq N\} \\
& \cup \{((i,j),(i+1,j)): -1 \leq i \leq N, 0 \leq j \leq N\}
\end{align*}
(see Figure~\ref{fig:carreN}). The vertices
\begin{displaymath}
V^i_{N} = \{(i,j) : 0 \leq i,j \leq N\}
\end{displaymath}
are called internal vertices of $K_N$ (it is also the set of vertices adjacent to 4 edges). The edges
\begin{align*}
E^e_{N} = & \{((-1,j),(0,j)):0 \leq j \leq N\} \cup \{((N,j),(N+1,j)):0 \leq j \leq N\} \\
& \cup \{((i,-1),(i,0)):0 \leq i \leq N\} \cup \{((i,N),(i,N+1)):0 \leq i \leq N\}
\end{align*}
are called external (or boundary) edges (it is also the set of edges whose one end vertex is in $V^i_{N}$ and the other one in $\prive{V_{N}}{V^i_{N}}$).
The graph $K_N$ have $2N^2 + 2N$ edges that could be classified in two groups: the $4N$ external edges $E^e_{N}$ and the $2N^2 - 2N$ internal edges $\prive{E_{N}}{E^e_{N}}$. Each of the edges could be oriented: either ``from bottom to top'' or ``from top to bottom'' if the edge is vertical,  either ``from left to right'' or ``from right to left'' it the edge is horizontal. We call an \emph{orientation} of $K_N$, the graph $K_N$ with an orientation for every of its edges. There exists $2^{2N^2+2N}$ orientations of $K_N$ and we denote $\Omega_{N}$ the set of these orientations.\par
In the following, we call vertices of $K_N$ only its internal vertices.\par

\begin{figure}
\begin{center}
\begin{tabular}{cc}
\includegraphics{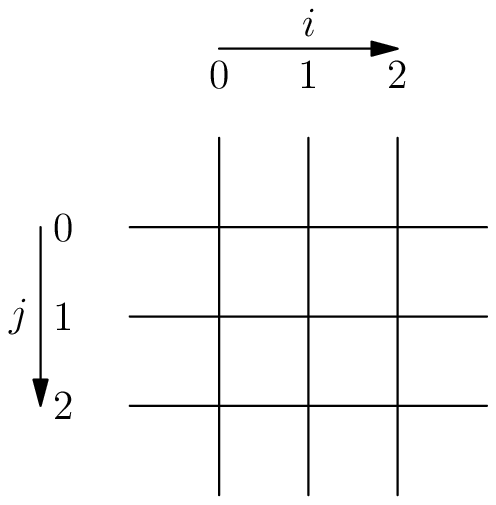} & \includegraphics{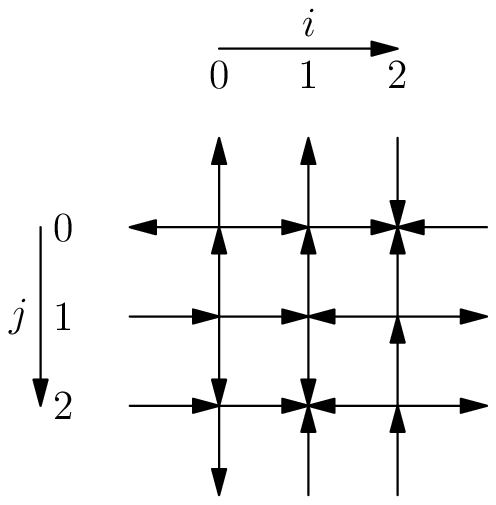} 
\end{tabular}
\end{center}
\caption{Left: $K_3$. Right: an orientation of $K_3$.}
\label{fig:carreN}
\end{figure}

In the 8-vertex model, we consider the subset $\Omega^{8}_{N} \subset \Omega_{N}$  of $K_N$'s orientations such that, around each vertex of $K_N$, there is exactly an even number (0, 2 or 4) of incoming edges. Hence, for any vertex $(i,j)$ in a $K_N$'s orientation $O \in \Omega^{8}_{N}$, the $(i,j)$'s four adjacent oriented edges are oriented like one of the eight local configurations of Figure~\ref{fig:8-vertex}.  For any $k \in \{1,\dots,8\}$, a vertex is said to be \emph{of type $k$} if its four adjacent edges are in the local configuration $k$. At each local configuration $k$, we associate a local weight $w_k$. Based on local weights $(w_k : k \in \{1,\dots,8\})$, we define a global weight (of Boltzmann type) $W$ on $\Omega^8_{N}$: let $O \in \Omega^{8}_{N}$,  the weight of the orientation $O$ is
\begin{equation}
W(O) = \prod_{k=1}^8 w_k^{n_k(O)}
\end{equation}
where $n_k(O)$ is the number of vertices of type $k$ in $O$.  From $W$ on $\Omega^{8}_{N}$, we define a probability measure $P^8_{N}$ on $\Omega^{8}_{N}$: for any $O \in \Omega^{8}_{N}$,
\begin{displaymath}
P^8_N\left(O\right) = \frac{W(O)}{Z^8_N} \text{ where } Z^8_{N} = \sum_{O' \in \Omega^8_{N}} W(O').
\end{displaymath}
The quantity $Z^8_{N}$ is called \emph{partition function} of the 8-vertex model on $K_N$. Under the probability $P^8_{N}$, the probability to get an orientation $O$ is then proportional to its weight $W(O)$.\par
When $w_7=w_8 = 0$, the 8-vertex model becomes the 6-vertex model. Orientations of the 6-vertex model with a non-zero global weight are $K_N$'s orientations whose nodes have exactly two incoming and two outgoing edges. The 6-vertex model is historically the first vertex-model introduced by Pauling in 1935~\cite{Pauling35} to study a model of ice on plane. Indeed, in the 6-vertex model, nodes represent molecules of water, and oriented edges, polarities of hydrogen bonds between these molecules. It is a model of statistical physics widely studied and we recommend~\cite[Chapter 8]{Baxter82},~\cite{Reshetikhin10},~\cite{DGHMT16} and references there in to the interesting reader.\par
The 8-vertex model is a generalization of the 6-vertex model introduced by Sutherland~\cite{Sutherland70} and Fan and Wu~\cite{FW70} in 1970. Its partition function was computed by Baxter in 1972 using Bethe's ansatz methods~\cite{Baxter72}. One important property of 8-vertex model according to 6-vertex model is that it is less dependent on boundary conditions~\cite{BKW73}. It is also related to Ising models as expressed by Baxter in~\cite[Section 10.3]{Baxter82}. Let us present some results of Baxter on 8-vertex model.

\begin{figure}
\begin{center}
\begin{tabular}{>{\centering\arraybackslash}m{.14\textwidth} | >{\centering\arraybackslash}m{.14\textwidth} >{\centering\arraybackslash}m{.14\textwidth} >{\centering\arraybackslash}m{.14\textwidth} >{\centering\arraybackslash}m{.14\textwidth} }
type & (1) & (2) & (3) & (4) \\
\hline
configuration on $K_N$ & \includegraphics{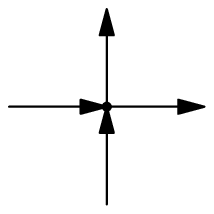} &  \includegraphics{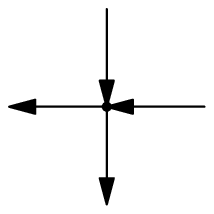} &  \includegraphics{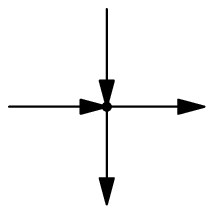} &  \includegraphics{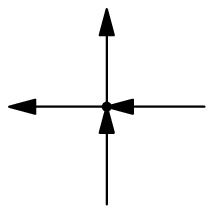} \\
\hline
configuration on $\overline{K}_N$& \includegraphics{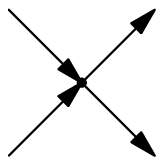} &  \includegraphics{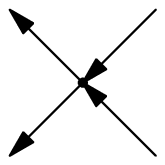} &  \includegraphics{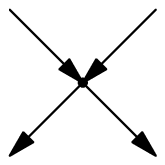} &  \includegraphics{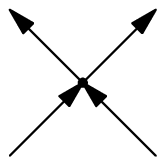} \\
\hline
\hline
type & (5) & (6) & (7) & (8) \\
\hline 
configuration on $K_N$ &  \includegraphics{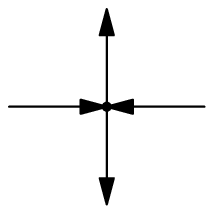} &  \includegraphics{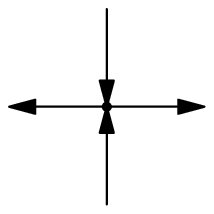} & \includegraphics{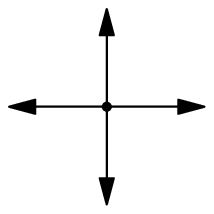} & \includegraphics{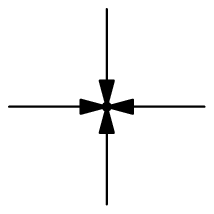} \\
\hline
configuration on $\overline{K}_N$ & \includegraphics{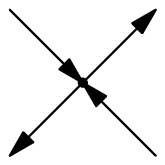} &  \includegraphics{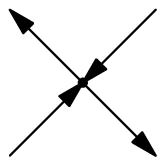} &  \includegraphics{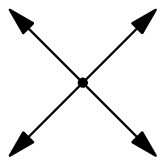} &  \includegraphics{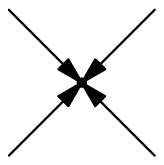}
\end{tabular}
\end{center}
\caption{The 8 available local configurations around any vertex of an orientation in $\Omega^8_N$ and their corresponding configuration (rotated by an angle $-\pi/4$) in $\overline{\Omega}^8_N$.}
\label{fig:8-vertex}
\end{figure}

Due to symmetric reasons on $K_N$, numbers of vertices of type $5$ and of type $6$ in an admissible configuration differ by less than $N$ ($|n_5-n_6| \leq N$), this permits to chose $w_5=w_6=c$ without loss of generality in asymptotic (in the asymptotic case, an interesting type $k$ of nodes is one for which we have that $n_k = \Theta \left(N^2\right)$). For similar reason, $|n_7-n_8| \leq 2N$ and, hence, we consider in the following $w_7=w_8=d$. We will also suppose that we are in a ``zero-field'' case (a classical hypothesis in a first study of a vertex-model), i.e.\ we suppose $w_1=w_2=a$ and $w_3=w_4=b$. In that case, the partition function $Z^8_{N}$ of the 8-vertex model was studied by Baxter in 1972~\cite{Baxter72} and he describes a phase transition behavior related to the value of $(a,b,c,d)$ when $N \to \infty$. He proved that the 8-vertex model has 5 different asymptotic behaviors~\cite[Section 8.10]{Baxter82}:
\begin{itemize}[topsep=0pt,itemsep=0pt,partopsep=0pt,parsep=0pt]
\item if $a > b+c+d$ (state I), then it is a ferromagnetic state, in which $N^2-o(N^2)$ vertices are either of type 1, or of type 2 a.s.\ when $N \to \infty$;
\item if $b > a+c+d$ (state II), then it is a ferromagnetic state, in which $N^2-o(N^2)$ vertices are either of type 3, or of type 4 a.s.\ when $N \to \infty$;
\item if $c > a+b+d$ (state IV), then it is an anti-ferromagnetic state, $N^2/2-o(N^2)$ vertices are of type 5 and $N^2/2-o(N^2)$ vertices are of type 6 a.s.\ when $N \to \infty$;
\item if $d > a+b+c$ (state V), then it is an anti-ferromagnetic state,  $N^2/2-o(N^2)$ vertices are of type 7 and $N^2/2-o(N^2)$ vertices are of type 8 a.s.\ when $N \to \infty$;
\item else (i.e.\ if $a,b,c,d < (a+b+c+d)/2$) (state III), then it is a disordered state, there are $\Theta(N^2)$ vertices of each type a.s. when $N \to \infty$.
\end{itemize}

Until now, we consider only the 8-vertex model on $K_N$ with free boundary conditions because external edges $E^e_N$ of $K_N$ are not constrained. In some other cases, edges of $E^e_N$ are constrained and so we do not consider all the orientations of $\Omega^8_{N}$ but a subset of them. The constraint imposed to the external edges is called a boundary condition. Some classical examples of boundary conditions are (see~\cite{BKW73}):
\begin{itemize}[topsep=0pt,itemsep=0pt,partopsep=0pt,parsep=0pt]
\item free boundary condition (FBC): no constraint on $E^e_N$;
\item periodic boundary condition (PBC): external edges of a same line or a same column are oriented in a same direction;
\item ``wall'' boundary condition (see~\cite{Z-J00}): horizontal external edges are oriented to the inside of the graph and vertical ones to the outside (see Figure~\ref{fig:wall});
\item special boundary condition (SBC)~: orientation of external edges is arbitrary imposed (```wall'' is an example).
\end{itemize} 

\begin{figure}
\begin{center}
\includegraphics{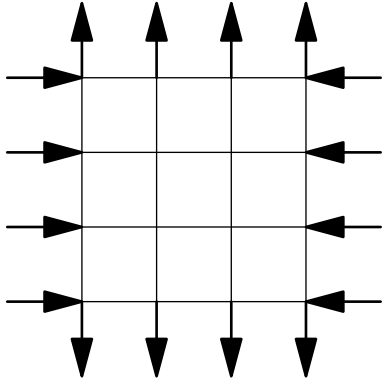}
\end{center}
\caption{$K_4$ with a ``wall'' boundary condition.}
\label{fig:wall}
\end{figure}

\medskip

In this article, main results are about 8-vertex model on $K_N$ with FBC and the following condition on local weights
\begin{equation} \label{eq:cond}
a+c=b+d.
\end{equation}
Our aim is to compute the law of orientations of two distant edges, that is the edge correlation function. This computation has been realized for 6-vertex model in some particular cases: the free fermion limit case~\cite{Sutherland68} and~the $a+c=b$ case~\cite{KDN90}. More generally, correlation functions are important subject in statistical physics, see~\cite[Chapter~10]{Sethna06} for an overview.

To obtain these results, we consider the 8-vertex model on the graph $\overline{K}_N$. Let us define it. The set of $\overline{K}_N$'s vertices is
\begin{align*}
\overline{V}_N & = \left \{(-1/2 + j,-1/2)~:~0 \leq j \leq N \right \} \\
& \qquad\bigcup\ \left \{(j,0)~:~0 \leq j \leq N-1 \right \}\\
& \qquad \bigcup_{i=0}^{N-1} \{(i/2-1/2+j,i/2+1/2)~:~0 \leq j \leq N-i \}
\end{align*}
and the set of its edges is
\begin{align*}
\overline{E}_N = & \{\ ((j-1/2,-1/2),(j,0)),((j+1/2,-1/2),(j,0)):\ 0 \leq j \leq N-1 \}
\\ & \bigcup_{i=0}^{N-1}  \{\ ((i/2-1/2+j,i/2+1/2),(i/2+j,i/2)), 
\\ & \qquad \quad ((i/2+1/2+j,i/2+1/2),(i/2+j,i/2)):\ 0 \leq j \leq N-1-i \}
\end{align*}
see Figure~\ref{fig:otriangN}). As before, we distinguish two types of vertices: internal vertices of degree 4 (called vertices in the following) and the other vertices of degree 1 or 2, and two types of edges: internal edges whose two end vertices are internal and external edges whose only one end vertex is internal. In addition, edges are numbered: the edge whose end vertices are $\{(i,t),(i',t')\}$ is indexed by $(2 \max(i,i'),2 \max(t,t'))$. And, as before, we can define 6 and 8-vertex models on $\overline{K}_N$. Definitions are the same; only local configurations change, they are rotated of an angle $-\pi/4$ (see Figure~\ref{fig:8-vertex}) from the ones in $K_N$'s case. Notations are also the same as in the $K_N$ case except that they are overlined.\par

In the following, orientations of edges are denoted by their vertical orientations: if the edge $(i,t)$ is oriented like $\nwarrow$ (if $i+t$ is even) or like $\nearrow$ (if $i+t$ is odd), then edge $(i,t)$ is said to be \emph{up-oriented} and, if the edge is oriented like $\searrow$ or $\swarrow$, it is said to be \emph{down-oriented}. This information is encoded in a state $e(i,t)$:
\begin{equation} \label{eq:defe}
e(i,t) = \begin{cases} 
1 & \text{if the edge $(i,t)$ is up-oriented}, \\ 
0 & \text{if the edge $(i,t)$ is down-oriented}.\end{cases}
\end{equation}

We introduce the following probabilistic boundary condition on the 6 and 8-vertex models on $\overline{K}_N$: orientations of edges on the top side
\begin{displaymath}
\left( e(i,0) : 0 \leq i \leq 2N-1\right)
\end{displaymath}
are distributed according to a product measure of parameter $q$, i.e.\ they are i.i.d.\ of common law the Bernouilli law of parameter $q$ (denoted $\bern{q}$): for any $i$,
\begin{displaymath}
\prob{e(i,0) = 1} = q \text{ and } \prob{e(i,0) = 0} = 1-q;
\end{displaymath}
the other external edges
\begin{align*}
\{ (i,t) : t-i=1,\ 1 \leq t \leq N \} \cup \{(i,t) : t+i = 2N,\ 1 \leq t \leq N \} 
\end{align*} 
are free oriented. We call this boundary condition half product measure of parameter $q$ (denoted $\BC{HPMBC}{q}$ in the following). Formally, the 8-vertex model on $\overline{K}_N$ with $\BC{HPMBC}{q}$ defines a probability measure $\overline{P}^{8,q}_{N}$ on $\overline{\Omega}^8_{N}$ in the following way: for any $O \in \overline{\Omega}^8_{N}$,
\begin{equation} \label{eq:HPMBC}
\overline{P}^{8,q}_{N}(O) = \sum_{B \in \{0,1\}^{2N}} \prod_{i=0}^{2N-1} q^{e(i,0)} (1-q)^{1-e(i,0)} \frac{a^{n_1+n_2} b^{n_3+n_4} c^{n_5+n_6} d^{n_7+n_8}}{\overline{Z}^{8,B}_{N}} \ind{(e(i,0): 0 \leq i \leq 2N-1)=B}
\end{equation}
where $\overline{Z}^{8,B}_{N}$ is the partition function of the 8-vertex model on $\overline{K}_N$ with SBC $B$ on edges $((i,0): 0 \leq i \leq 2N)$ and FBC on other edges of $\overline{E}^e_N$. Denoting the subset of $\overline{\Omega}^8_{N}$ such that edges $((i,0): 0 \leq i \leq 2N)$ are oriented as $B$ by $\overline{\Omega}^{8,B}_{N}$,
\begin{equation}
\overline{Z}^{8,B}_{N} = \sum_{O' \in \overline{\Omega}^{8,B}_{N}} a^{n_1(O')+n_2(O')} b^{n_3(O')+n_4(O')} c^{n_5(O')+n_6(O')} d^{n_7(O')+n_8(O')}.
\end{equation}
\par

\begin{figure}
\begin{center}
\includegraphics{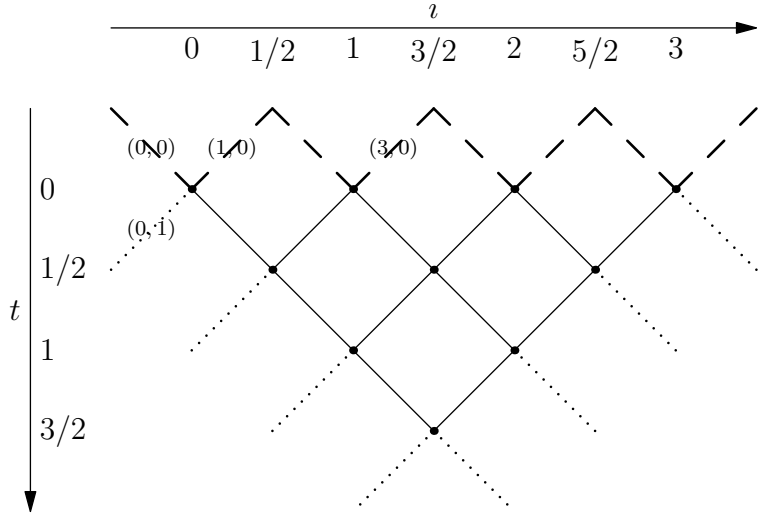}
\end{center}
\caption{The graph $\overline{K}_4$. On axis: vertices numbering. Boundary conditions: dashed edges are i.i.d.\ (for the orientation up/bottom) and dotted edges are free. On edges: edges' numbering.}
\label{fig:otriangN}
\end{figure}

Our aim is to describe properties of 6 and 8-vertex models on $\overline{K}_N$ in the thermodynamic limit (when $N \to \infty$) under the constraint $a+c=b+d$. A first remarkable property is that, when $a+c=b+d$, 8-vertex models on $\overline{K}_N$ with $\BC{HPMBC}{1/2}$ or with $\text{FBC}$ are the same. 
\begin{proposition} \label{prop:8bc}
For any $N$, if $a+c=b+d$, then $\overline{P}^{8,1/2}_{N} = \overline{P}^{8}_{N}$.
\end{proposition}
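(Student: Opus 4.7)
The plan is to reduce the claim to the statement that the conditional partition function $\overline{Z}^{8,B}_{N}$ is the same for every $B\in\{0,1\}^{2N}$, and then prove the latter by integrating the vertex weights from the bottom of $\overline{K}_N$ upward using the hypothesis $a+c=b+d$. Substituting $q=1/2$ in \eqref{eq:HPMBC} gives $q^{e(i,0)}(1-q)^{1-e(i,0)}=1/2$ irrespective of $e(i,0)$, and the indicator collapses the sum over $B$ onto the top-row vector $B(O):=(e(0,0),\dots,e(2N-1,0))$ read off from $O$; hence $\overline{P}^{8,1/2}_{N}(O) = 2^{-2N}\,W(O)/\overline{Z}^{8,B(O)}_{N}$. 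Since $\overline{Z}^{8}_{N}=\sum_{B}\overline{Z}^{8,B}_{N}$, the identity $\overline{P}^{8,1/2}_{N}=\overline{P}^{8}_{N}$ is equivalent to $\overline{Z}^{8,B}_{N}$ being constant in $B$, in which case it equals $2^{-2N}\overline{Z}^{8}_{N}$.

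The heart of the proof is a local identity: under $a+c=b+d$, for any internal vertex of $\overline{K}_{N}$ and any choice $(e_{TL},e_{TR})$ of states on its two upper adjacent edges,
\begin{equation*}
\sum_{(e_{BL},e_{BR})} w(e_{TL},e_{TR},e_{BL},e_{BR}) \;=\; K \;:=\; a+c \;=\; b+d,
\end{equation*}
where the sum ranges over the lower edge pairs satisfying the even-parity constraint. This is a routine check: the parity rule leaves exactly two admissible $(e_{BL},e_{BR})$, and inspection of the eight types in Figure~\ref{fig:8-vertex} shows that their two weights sum to $a+c$ when $e_{TL}+e_{TR}$ is even and to $b+d$ when it is odd.

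Equipped with this identity, I process the $M=N(N+1)/2$ internal vertices of $\overline{K}_{N}$ in order of decreasing $y$ (from bottom to top), summing each vertex's weight over its two lower edges. Every non-top edge of $\overline{K}_{N}$ appears as a lower edge of exactly one internal vertex: an internal edge is the lower edge of its upper endpoint, and a side-boundary edge is the lower edge of the unique internal vertex it meets. Once a vertex $v'$ below $v$ has been treated, its contribution is, by the local identity, the constant $K$ independent of $v'$'s upper edges; so when we reach $v$ the only remaining dependence on $v$'s lower edges lies in $w(v)$, and applying the local identity to $v$ produces a fresh factor $K$. Iterating over all $M$ vertices yields $\overline{Z}^{8,B}_{N}=K^{M}$, manifestly independent of $B$, so $\overline{Z}^{8,B}_{N}=2^{-2N}\overline{Z}^{8}_{N}$ and the two probability measures coincide.

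The delicate step is the combinatorial bookkeeping of the third paragraph: one must check that the triangular geometry of $\overline{K}_{N}$ really does permit a bottom-to-top order in which every non-top edge is summed exactly once as a lower edge of an internal vertex, so that the $M$ local factors $K$ combine cleanly and the dependence on the top boundary condition $B$ disappears.
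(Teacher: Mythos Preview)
Your proof is correct and follows essentially the same route as the paper's: both arguments reduce the claim to showing that $\overline{Z}^{8,B}_{N}$ does not depend on $B$, and both establish this by the local identity that summing the weight of a vertex over its two lower edges, with its two upper edges fixed, yields the constant $a+c=b+d$; iterating this from the apex of $\overline{K}_N$ up to the top row gives $\overline{Z}^{8,B}_{N}=(a+c)^{N(N+1)/2}$, exactly as in the paper's Lemma~\ref{lem:pf8} and its proof of Proposition~\ref{prop:8bc}. The only cosmetic discrepancy is that your assignment of which parity of $e_{TL}+e_{TR}$ produces $a+c$ versus $b+d$ is swapped relative to the paper's convention, but since the hypothesis equates the two this has no effect on the argument.
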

\medskip
We define now the graph $\overline{K}_\infty$ on the half-plan $\ZZ \times \NN$ and a probabilistic boundary condition on this graph. Later (in Proposition~\ref{prop:limitlaw}) this graph and its boundary condition will appear as the limit of the sequence of graphs $(\overline{K}_N: N \geq 1)$ with $\BC{HPMBC}{q}$ when $N \to \infty$ (in a sense that we will precise). The set of $\overline{K}_\infty$'s vertices is
\begin{displaymath}
\overline{V}_\infty = \{(i-1/2,t-1/2),\ (i,t): i \in \ZZ, t \in \NN\}
\end{displaymath}
and its set of edges is
\begin{align*}
\overline{E}_\infty = &\ \{((i-1/2,t-1/2),(i,t)),\ ((i-1/2,t+1/2),(i,t)), \\
& \quad ((i+1/2,t-1/2),(i,t)),\ ((i+1/2,t+1/2),(i,t)): i \in \ZZ, t \in \NN\}
\end{align*}
(see Figure~\ref{fig:ocarreI}). As before, there are two types of vertices: internal vertices (called vertices in the following) of degree 4 and the other vertices $\{(i-1/2,-1/2): i \in \ZZ\}$ of degree 2. As for $\overline{K}_N$, edges are numbered: edge whose end vertices is $\{(i,t),(i',t')\}$ is indexed by $(2 \max(i,i'),2 \max(t,t'))$. Edges $\{(i,0) : i \in \ZZ\}$ are external and others are internal. On this graph, we call product measure boundary condition of parameter $q$ ($\BC{PMBC}{q}$) the probabilistic boundary condition such that $(e(i,0): i \in \ZZ)$ are i.i.d.\ of common law $\bern{q}$.

\begin{figure}
\begin{center}
\includegraphics{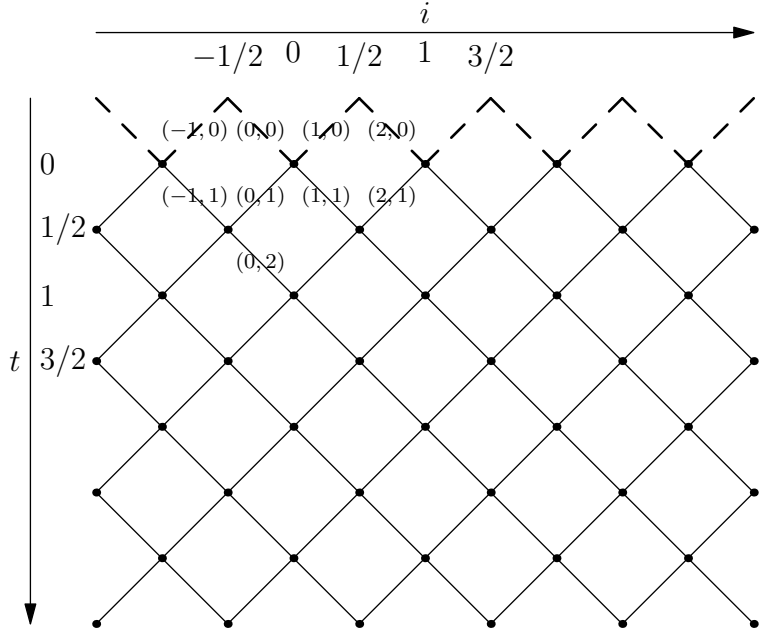}
\end{center}
\caption{The graph $\overline{K}_\infty$. On the axis: vertices numbering. On edges: edges numbering. Boundary condition $\BC{PMBC}{q}$: vertical orientations of dashed edges $(e(i,0) : i \in \NN)$ are i.i.d.}
\label{fig:ocarreI}
\end{figure}

Let $\overline{\Omega}^8_\infty$ be the set of $\overline{K}_\infty$'s orientations such that any vertex of an orientation $O \in \overline{\Omega}^8_\infty$ has $0$, $2$ or $4$ incoming edges. We can define a probability measure $\overline{P}^8_{\infty}$ on $\overline{\Omega}^8_\infty$ associated to the 8-vertex model on $\overline{K}_\infty$ with $\BC{PMBC}{1/2}$. In general case (for any $a$, $b$, $c$ and $d$), this measure must be seen as limit law of probability measures $\overline{P}^{8,1/2}_N$ when $N \to \infty$ in a certain sense. In the case $a+c=b+d$, there is a simpler way to prove its existence by considering the law $\displaystyle \mathcal{L}\left(\PM{\frac{1}{2}};\frac{a}{a+c},\frac{b}{b+d}\right)$, defined below.

\subsection*{The law $\mathcal{L}(\mu;p,r)$ on $\overline{\Omega}_{\infty}$}
In~\cite{KDN90}, Kandel, Domany and Nienhuis defined a law on $\overline{\Omega}^6_\infty$ (the set of orientations of $\overline{K}_\infty$ with exactly 2 incoming edges around each vertex) as the law of a Markov chain whose state space is $\{0,1\}^\ZZ$. This law is in fact the limit law of the 6-vertex model on $\overline{K}_N$ with $\BC{HPMBC}{q}$ when $N \to \infty$. Here, we generalize their idea to define laws $\mathcal{L}(\mu;p,r)$ on $\overline{\Omega}^8_{\infty}$, whose one specification is $\overline{P}^8_\infty$.
\medskip
\begin{definition}[Law $\mathcal{L}(\mu;p,r)$] \label{def:loiqpr}
Let $\mu$ be any probability measure on $\{0,1\}^\ZZ$. Let $p,r \in [0,1]$, we define the law $\mathcal{L}(\mu;p,r)$ on $\overline{\Omega}^8_{\infty}$ by:
\begin{itemize}[topsep=0pt,itemsep=0pt,partopsep=0pt,parsep=0pt]
\item Law of orientations $(e(i,0): i \in \ZZ)$ of edges on the first line is $\mu$.

\item For any even $t$, starting with orientations $(e(i,t): i \in \ZZ)$ on line $t$, we obtain orientations $(e(i,t+1): i \in \ZZ)$ on line $t+1$ by the following way: for any $i \in \ZZ$, orientations of pair $(e(2i,t+1),e(2i+1,t+1))$ depends only on pair $(e(2i,t),e(2i+1,t))$ and local transition probabilities are, for any $i\in \ZZ$, any $k \in \{0,1\}$,
\begin{align*}
& \prob{\ (e(2i,t+1),e(2i+1,t+1)) = (k,k)\ |\ (e(2i,t),e(2i+1,t)) = (k,k)\ } = r,\\
& \prob{\ (e(2i,t+1),e(2i+1,t+1)) = (1-k,1-k)\ |\ (e(2i,t),e(2i+1,t)) = (k,k)\ } = 1-r,\\
& \prob{\ (e(2i,t+1),e(2i+1,t+1)) = (1-k,k)\ |\ (e(2i,t),e(2i+1,t)) = (k,1-k)\ } = p,\\
& \prob{\ (e(2i,t+1),e(2i+1,t+1)) = (k,1-k)\ |\ (e(2i,t),e(2i+1,t)) = (k,1-k)\ } = 1-p
\end{align*}
and local transitions from pair $(e(2i,t),e(2i+1,t))$ to pair $(e(2i,t+1),e(2i+1,t+1))$ are independent of one another, i.e.\ for any $i_1,i_2 \in \ZZ$ such that $i_1 < i_2$, for any $(k_{2 i_1}, k_{2 i_1+1}, \dots , k_{2 i_2}  , k_{2 i_2+1}) \in \{0,1\}^{2(i_2-i_1+1)}$,
\begin{align*}
& \prob{\ (e(j,t+1) = k_j : 2 i_1 \leq j \leq 2 i_2 +1) \ |\ (e(i,t) : i \in \ZZ) \ } \\
& = \prod_{i=i_1}^{i_2} \prob{\ (e(2i,t+1),e(2i+1,t+1)) = (k_{2i},k_{2i+1})\ |\ (e(2i,t),e(2i+1,t))\ }.
\end{align*}

We denote by $T_0$ this operator on $\mes{\{0,1\}^\ZZ}$, the set of $\{0,1\}^\ZZ$'s probability measures:
\begin{equation}
T_0((e(i,t): i \in \ZZ)) = (e(i,t+1): i \in \ZZ).
\end{equation}

\item For any odd $t$, transition is the same as in case even $t$ with the difference that we consider pairs of edges of abscissas $(2i-1,2i)$ instead of pairs of edges of abscissas $(2i,2i+1)$. We denote by $T_1$ this operator.
\end{itemize}

Local transitions of these two operators are illustrated on Figure~\ref{fig:op}.

Finally, the law $\mathcal{L}(\mu;p,r)$ on $\overline{\Omega}^8_\infty$ is the law of $(e(i,t):i \in \ZZ, t \in \NN)$.
\end{definition}
\medskip

\begin{figure}
\begin{center}
\begin{tabular}{>{\centering\arraybackslash}m{.15\textwidth} |ccc|ccc}
Initial state at time $t$ & \multicolumn{3}{c|}{\raisebox{-5mm}{\includegraphics{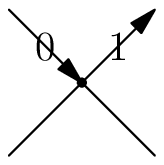}}} & \multicolumn{3}{c}{\raisebox{-5mm}{\includegraphics{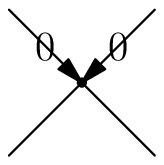}}} \\
\hline
Final state at time $t+1$ & \raisebox{-5mm}{\includegraphics{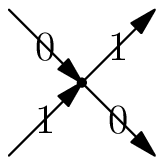}} & or & \raisebox{-5mm}{\includegraphics{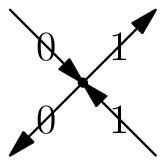}} &   \raisebox{-5mm}{\includegraphics{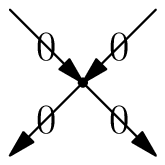}} & or & \raisebox{-5mm}{\includegraphics{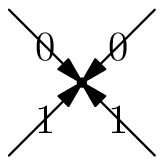}} \\
& w.p. $p$ & & w.p. $1-p$ & w.p. $r$ & & w.p. $1-r$
\end{tabular}
\end{center}
\caption{Operators $T_0$ and $T_1$. In all case, the left-up arrow is indexed by $(i,t)$ with $i+t$ even. Orientation on edge correspond to a choice of $k=0$.}
\label{fig:op}
\end{figure}

In the following, the considered measure $\mu$ will be the product measure of parameter $1/2$ (denoted $\PM{1/2}$) and, sometimes, of parameter $q \in [0,1]$ (denoted $\PM{q}$). To fix notations, $(e_i: i \in \ZZ)$ is distributed according to $\PM{q}$, if $(e_i:i \in \ZZ)$ are i.i.d.\ and $e_0 \sim \bern{q}$.\par

If $r=1$ and $\mu = \PM{q}$, we recover the result of~\cite{KDN90}. For any value of $r$, we get the following generalization: 
\begin{proposition} \label{prop:limitlaw}
For any $N$, for any subset $\overline{S}_N = (\overline{V}_N,\overline{E}_N)$ of $\overline{K}_\infty$ isomorphic to $\overline{K}_N$. Let $O \sim \mathcal{L} \left( \PM{\frac{1}{2}};\frac{a}{a+c},\frac{b}{b+d} \right)$ with $a+c=b+d$. Then, the law of oriented edges of $O$ in the subset $\overline{S}_N$ ($O|_{\overline{S}_N}$) is distributed as $\overline{P}^8_N$.
\end{proposition}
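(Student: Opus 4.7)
The plan is to show that $\mathcal{L}(\PM{1/2}; a/(a+c), b/(b+d))|_{\overline{S}_N}$ coincides pointwise with $\overline{P}^8_N$. By Proposition~\ref{prop:8bc}, since $a+c = b+d$, it suffices to match this restricted law with $\overline{P}^{8,1/2}_N$, which I will do by computing both probabilities of an arbitrary fixed orientation $O$ of $\overline{S}_N$ and comparing the expressions directly.

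First I identify each PCA transition probability with a normalized $8$-vertex weight. The PCA transitions split according to the parity of the input pair: from $(k,1-k)$ the two valid output pairs (forced by the even-parity rule) occur with probabilities $p=a/(a+c)$ and $1-p=c/(a+c)$; from $(k,k)$ they occur with probabilities $r=b/(b+d)$ and $1-r=d/(b+d)$. Inspecting Figures~\ref{fig:8-vertex} and~\ref{fig:op} shows that these four cases correspond to local configurations of weights $a$, $c$, $b$, $d$ respectively. Under $a+c=b+d$, every transition therefore takes the uniform form $P_{\mathrm{trans}}(v)=w(v)/(a+c)$.

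Next I compute $\mathcal{L}|_{\overline{S}_N}(O)$. The bottom row of $\overline{S}_N$, distributed as $\bern{1/2}^{\otimes 2N}$, contributes a factor $2^{-2N}$; each internal vertex of $\overline{S}_N$ contributes $w(v)/(a+c)$. The external vertices of $\overline{S}_N$ are internal in $\overline{K}_\infty$, but their PCA transitions touch at most one edge of $\overline{S}_N$ (necessarily an input edge already produced by an earlier layer); marginalizing over outside edges layer by layer from the top collapses each such transition to the factor $1$, because the local kernels sum to $1$ over outputs given input and outside bottom-row edges are independent of everything inside. This gives
\[
\mathcal{L}|_{\overline{S}_N}(O)=2^{-2N}\cdot\frac{w(O)}{(a+c)^{|V^{\mathrm{int}}|}},
\]
where $|V^{\mathrm{int}}|$ is the number of internal vertices of $\overline{S}_N$. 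On the other side I show $\overline{Z}^{8,B}_N=(a+c)^{|V^{\mathrm{int}}|}$ independent of $B$: sweeping from the bottom layer upward, at each internal vertex the two valid output pairs sum to either $a+c$ or $b+d$ depending on the parity of the input pair, and both equal $a+c$ by hypothesis. Substituting into the definition of $\overline{P}^{8,1/2}_N(O)$ recovers the same expression, and Proposition~\ref{prop:8bc} finishes the argument.

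The main obstacle is Step~1: matching each of the eight vertex types in Figure~\ref{fig:8-vertex} to the correct transition event in Figure~\ref{fig:op} requires careful bookkeeping of the $-\pi/4$ rotation and the orientation convention encoded by $e(i,t)$. A secondary subtlety is the marginalization argument for external vertices of $\overline{S}_N$, which rests on the fact that the outside edges form a structure that can be integrated out one layer at a time (the downstream ones via kernels summing to $1$, the bottom-row ones via product independence), so the joint law of $\overline{S}_N$-edges reduces exactly to bottom-row randomness and internal-vertex transitions.
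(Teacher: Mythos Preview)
Your proposal is correct and takes essentially the same route as the paper: both identify each local PCA transition with a normalized vertex weight $w(v)/(a+c)$ (using $a+c=b+d$) and conclude that the restricted law equals $2^{-2N}\,w(O)/(a+c)^{N(N+1)/2}$. The paper matches this directly against $\overline{P}^8_N$ via Lemma~\ref{lem:pf8}, so your detour through Proposition~\ref{prop:8bc} and the separate computation of $\overline{Z}^{8,B}_N$ is redundant---it just reproves part of that proposition's proof. Your marginalization paragraph is more explicit than the paper, which silently works with the copy of $\overline{S}_N$ whose base sits on line $t=0$ (where the cone of dependence of $\overline{S}_N$ is exactly $\overline{S}_N$, so no integration over outside edges is needed); for a translate with base at some $t_0>0$, note that your assertion that the base row is $\bern{1/2}^{\otimes 2N}$ already requires Proposition~\ref{cor:inv2}.
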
\par
For this reason, in the following, $\displaystyle \mathcal{L} \left( \PM{\frac{1}{2}};\frac{a}{a+c},\frac{b}{b+d} \right)$ is denoted by $\overline{P}^8_\infty$. Moreover, we obtain that
\begin{proposition} \label{prop:SKN}
Let $S_N = (V_N,E_N)$ be any subset of $\overline{K}_\infty$ isomorphic to $K_N$ rotated by an angle $-\pi/4$ (see Figure~\ref{fig:SKN}). Let $O \sim \overline{P}^8_\infty$ with $a+c=b+d$, then the law of $O_{|S_N}$ ($O$ restricted to edges in $S_N$ and rotated by an angle $\pi/4$) is $P^8_{N}$.
\end{proposition}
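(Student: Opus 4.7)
The plan is to compute directly the marginal of $\overline{P}^8_\infty$ on $S_N$'s edges, using the Markov chain description $\overline{P}^8_\infty = \mathcal{L}\bigl(\PM{1/2}; a/(a+c), b/(b+d)\bigr)$ given by Definition~\ref{def:loiqpr}, and to show that it coincides with $P^8_N$. The crucial role of the hypothesis $a+c=b+d$ is that the common value $K:=a+c=b+d$ is the denominator of every local transition probability, so the denominators combine into a single constant independent of the vertex type.

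I would classify the vertices of $\overline{K}_\infty$ according to how they interact with $S_N$: (i) the $(N+1)^2$ internal vertices of $S_N$, whose four incident edges all lie in $S_N$; (ii) the $4(N+1)$ external vertices of $S_N$, which are still internal (degree~$4$) in $\overline{K}_\infty$ but have exactly one incident edge in $S_N$; and (iii) the remaining vertices of $\overline{K}_\infty$, with no incident edge in $S_N$. In the Markov-chain product-factorization, vertices of class~(iii) contribute a factor~$1$ by $\PM{1/2}$-stationarity under $T_0$ and $T_1$; a vertex $v$ of class~(i) contributes the factor $w_v(\sigma)/K$, where $w_v(\sigma)\in\{a,b,c,d\}$ is the $8$-vertex weight of $v$'s type in $\sigma$ (the identity $a+c=b+d=K$ being used here); and a vertex $v$ of class~(ii) contributes the constant $1/2$ after marginalizing the three non-$S_N$ edges at $v$, regardless of the orientation of the single $S_N$-edge at $v$, using that the input pair at $v$ is uniformly distributed on $\{0,1\}^2$ under $\PM{1/2}$.

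Combining and normalizing the per-vertex contributions yields
\[
\overline{P}^8_\infty(O|_{S_N}=\sigma)\ \propto\ \prod_{v \text{ internal of } S_N} w_v(\sigma)\ =\ W_{S_N}(\sigma),
\]
which, after the rotation by $\pi/4$ identifying $S_N$ with $K_N$, is precisely the Boltzmann weight defining $P^8_N(\sigma)$. The main technical step is the class-(ii) computation: for each of the positions the single $S_N$-edge can occupy at a class-(ii) vertex (input or output pair of a $T_0$ or $T_1$ update), one must verify that the sum over the three remaining edges equals exactly $1/2$; this follows from the elementary identity $r+p+(1-p)+(1-r)=2$ (and its symmetric analogue) combined with the uniform marginal of the input pair under $\PM{1/2}$, and does not itself require $a+c=b+d$. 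The hypothesis $a+c=b+d$ enters only in class~(i), to merge the two possible denominators into a single constant.
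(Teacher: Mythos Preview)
Your proof has a genuine gap in the class-(ii) analysis. You claim every class-(ii) vertex contributes a factor $1/2$, justified by ``the input pair at $v$ is uniformly distributed on $\{0,1\}^2$ under $\PM{1/2}$.'' But this is only valid for the class-(ii) vertices where the single $S_N$-edge lies in the \emph{output} pair of the $T_0/T_1$ update at $v$---those on the two sides of the diamond $S_N$ facing earlier time in the Markov chain. On the other two sides the $S_N$-edge lies in the \emph{input} pair; the input at $v$ is then $(\sigma_v, e_2)$ with $\sigma_v$ fixed, which is not uniform on $\{0,1\}^2$, and summing the three free edges gives $1$, not $1/2$. (Consistently, if all class-(ii) vertices contributed $1/2$ the normalization would be $2^{-4N}$ rather than the $2^{-2N}$ appearing in $Z^8_N=2^{2N}(a+c)^{N^2}$.)

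More fundamentally, the per-vertex factorization you invoke is not automatic: the three non-$S_N$ edges at a class-(ii) vertex are shared with neighbouring class-(ii) or class-(iii) vertices and cannot be marginalized locally. Making your approach rigorous requires a line-by-line induction showing that the non-$S_N$ edges on each row remain i.i.d.\ $\mathcal{B}(1/2)$ conditionally on the $S_N$-edges seen so far. This induction does go through on the upper half of the diamond (where all class-(ii) vertices are of output type), but it fails once input-type class-(ii) vertices appear: their two non-$S_N$ outputs have a $\sigma$-dependent, non-uniform law (for instance one marginal equals $(1+r-p)/2$). The sum over the lower half does still collapse to a $\sigma$-independent constant---by summing outputs first, since they feed only into class-(iii) or further input-type class-(ii) vertices---but this telescoping is substantially more than what you have written, and is in effect a re-derivation of the special case of Lemma~\ref{lem:iid} needed here.

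The paper avoids all of this by invoking Lemma~\ref{lem:iid} (proved via the particle system of Section~\ref{sec:vitesse}): it directly gives that the zigzag of $2N$ input edges to $S_N$ is i.i.d.\ $\mathcal{B}(1/2)$, after which the factorization $\prob{O_{|S_N}}=2^{-2N}\prod_{v}\bigl(w_v/K\bigr)$ is immediate, with no need to classify boundary vertices or track conditional laws row by row.
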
\par

\begin{figure}
\begin{center}
\includegraphics{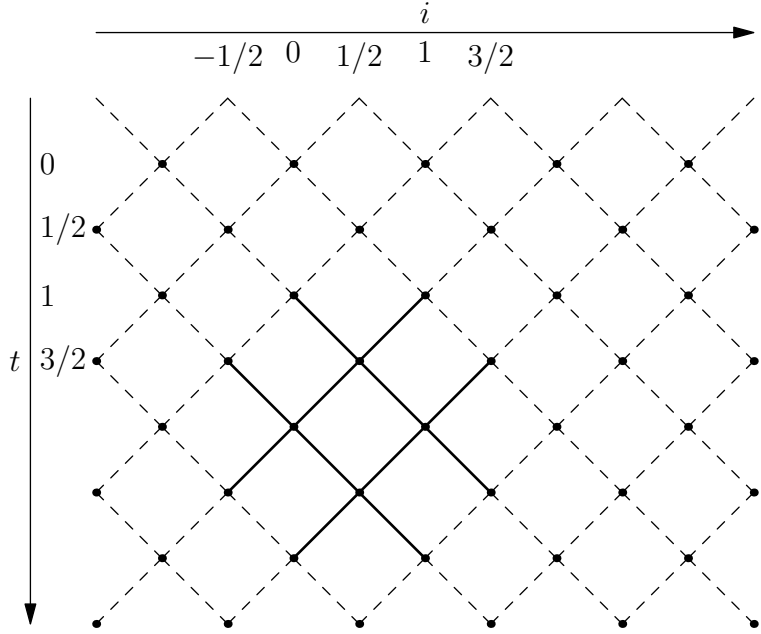}
\end{center}
\caption{In full line, a subset $\text{S}_2$ of $\overline{K}_\infty$ isomorphic to $K_2$.}
\label{fig:SKN}
\end{figure}

The reason why we consider only $\mu=\PM{1/2}$ is due to the fact that $\PM{1/2}$ has remarkable properties according to the Markov chain on $E^{\ZZ}$ with operators $T_0$ and $T_1$ used in Definition~\ref{def:loiqpr}. Indeed, it is an invariant law of this Markov chain.
\begin{proposition} \label{cor:inv2}
Let $O \sim \mathcal{L}(\PM{1/2};p,r)$. Then, for any $t$, $(e(i,t): i\in \ZZ) \sim \PM{1/2}$.
\end{proposition}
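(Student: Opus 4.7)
The plan is to proceed by induction on the time coordinate $t \geq 0$. The base case $t=0$ holds by definition of $\mathcal{L}(\PM{1/2};p,r)$. For the inductive step, I will treat even $t$ (the operator $T_0$); the odd case is identical after re-indexing by pairs $(2i-1,2i)$ with $T_1$.

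Assume $(e(i,t) : i \in \ZZ) \sim \PM{1/2}$. Then the two-component blocks
\[
X_i = (e(2i,t), e(2i+1,t)), \qquad i \in \ZZ,
\]
are i.i.d.\ and each $X_i$ is uniform on $\{0,1\}^2$. By Definition~\ref{def:loiqpr}, the images $Y_i = (e(2i,t+1), e(2i+1,t+1))$ are obtained from the $X_i$'s by an independent random mechanism applied blockwise; hence the $Y_i$'s are mutually independent, and it suffices to verify that each $Y_i$ is uniform on $\{0,1\}^2$.

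The verification is a direct one-line computation per target state. For example, since $X_i = (0,0)$ and $X_i = (1,1)$ each occur with probability $1/4$,
\[
\prob{Y_i = (0,0)} = \tfrac{1}{4}\,r + \tfrac{1}{4}\,(1-r) = \tfrac{1}{4},
\]
and, using that $X_i = (0,1)$ and $X_i = (1,0)$ each occur with probability $1/4$,
\[
\prob{Y_i = (0,1)} = \tfrac{1}{4}\,(1-p) + \tfrac{1}{4}\,p = \tfrac{1}{4}.
\]
The two remaining cases are entirely symmetric, so $Y_i$ is uniform on $\{0,1\}^2$, as required.

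Combining independence across blocks with the marginal uniformity of each $Y_i$, we conclude $(e(j,t+1) : j \in \ZZ) \sim \PM{1/2}$, which completes the induction. There is really no obstacle here: the result boils down to the elementary observation that the two transition sub-stochastic matrices on $\{(0,0),(1,1)\}$ and on $\{(0,1),(1,0)\}$ are each doubly stochastic, so they fix the uniform measure on their respective orbits; since the uniform measure on $\{0,1\}^2$ assigns mass $1/2$ to each orbit, it is preserved. Independence across blocks is built into the definition of $T_0$ and $T_1$, hence the product structure propagates.
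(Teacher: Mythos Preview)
Your proof is correct and is exactly what the paper has in mind: it states that Proposition~\ref{cor:inv2} is ``easy to prove by coming back to Definition~\ref{def:loiqpr}'' without writing out the details, and your blockwise induction is precisely that argument. The paper also records, in Section~\ref{sec:PCA}, an alternative proof via the TPCA $\PCA{A_8}$ and Proposition~\ref{prop:2col}, but that route is presented as a byproduct of the PCA machinery rather than the intended primary argument.
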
 

And, moreover, if $(p,r) \in (0,1)^2$ and $p+r \neq 1$, it is the unique invariant law and the Markov chain is ergodic.
\begin{proposition} \label{prop:ergo}
Let $\mu$ be any measure on $\{0,1\}^\ZZ$ and let $(p,r) \in (0,1)^2$. Let $O \sim \mathcal{L}(\mu;p,r)$. Then, by denoting $\mu_t$ the law of $(e(i,t): i \in \ZZ)$,
\begin{displaymath}
\mu_t \stackrel{(l)}{\to} \PM{1/2} \text{ as } t \to \infty.
\end{displaymath}
\end{proposition}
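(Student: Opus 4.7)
The plan is to use character (Fourier) analysis on $\{0,1\}^\ZZ$. For every finite $F \subset \ZZ$, set $\chi_F(e) := \prod_{i \in F}(-1)^{e(i)}$; since a probability measure on $\{0,1\}^\ZZ$ is characterised by its finite-dimensional marginals, which are in turn determined by the quantities $\hat\nu(F) := \esp{\nu}{\chi_F}$, the target convergence $\mu_t \to \PM{1/2}$ is equivalent to $\hat\mu_t(F) \to 0$ for every nonempty finite $F$. I will prove this uniformly in the initial measure $\mu$, with a geometric rate.

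The first step is to read off how $T_0$ (resp.\ $T_1$) acts on characters. Since $T_0$ updates the pairs $\{2i, 2i+1\}$ independently, the conditional expectation of $\chi_F(e(\cdot, t+1))$ given $e(\cdot, t)$ factorises over these pairs, and a case analysis on the four local transitions of Definition~\ref{def:loiqpr} gives: if $F \cap \{2i, 2i+1\}$ is empty or is the whole pair, the corresponding factor is left unchanged (the parity $e(2i)+e(2i+1)$ mod $2$ is conserved by $T_0$); if it is $\{2i\}$, the factor becomes $\alpha\,\chi_{\{2i\}} + \beta\,\chi_{\{2i+1\}}$ with $\alpha := r-p$ and $\beta := r+p-1$, and symmetrically for $\{2i+1\}$. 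Expanding the product yields a decomposition $T_0 \chi_F = \sum_G c^0_{F,G}\,\chi_G$ with total weight
\[
\sum_G |c^0_{F,G}| = C^{k_0(F)}, \qquad C := |\alpha|+|\beta| = \max\!\bigl(|2r-1|,\,|2p-1|\bigr) < 1,
\]
where $k_0(F)$ counts the pairs $\{2i,2i+1\}$ on which $F$ is a singleton; the strict inequality $C<1$ is exactly the hypothesis $(p,r) \in (0,1)^2$. An analogous statement holds for $T_1$ with the shifted pairing $\{2i-1,2i\}$ and a count $k_1(F)$. Note also that both operators preserve nonemptiness of the character index, because the number of elements of $F$ in each updated pair is conserved.

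The crux is the following contraction lemma: for every nonempty finite $F$,
\[
\|T_1 T_0 \chi_F\|_* \leq C, \qquad \text{where } \bigl\|\textstyle\sum_H a_H \chi_H\bigr\|_* := \sum_H |a_H|.
\]
The proof splits on $k_0(F)$. If $k_0(F) \geq 1$, then $\|T_0 \chi_F\|_* \leq C$ by the previous step, and since $\|T_1\chi_G\|_* = C^{k_1(G)} \leq 1$, the bound follows. If $k_0(F) = 0$, then $F$ is a finite nonempty union of $\pi_0$-pairs and $T_0 \chi_F = \chi_F$, so I need a lower bound on $k_1(F)$; this is the main obstacle. It is resolved by the elementary geometric remark that any such $F$ has a leftmost and a rightmost block, and each block boundary creates a singleton $\pi_1$-pair at its end, so $k_1(F) \geq 2$ and $\|T_1 \chi_F\|_* \leq C^2 \leq C$.

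To conclude, I iterate. Because $T_0$ and $T_1$ preserve nonemptiness of indices, induction on the contraction lemma gives $\|(T_1 T_0)^n \chi_F\|_* \leq C^n$ for every nonempty $F$; a final application of $T_0$ (which has operator norm $\leq 1$ on the $\chi$-basis) covers odd times, so
\[
\bigl|\hat\mu_t(F)\bigr| \leq C^{\floor{t/2}}
\]
uniformly in $\mu_0$ and in nonempty $F$. This is the desired weak convergence, even with a geometric rate.
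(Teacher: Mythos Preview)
Your proof is correct and self-contained. The character computation is right: for a singleton in a $\pi_0$-pair one gets the factor $\alpha\chi_{\{2i\}}+\beta\chi_{\{2i+1\}}$ with $\alpha=r-p$, $\beta=r+p-1$, and $|\alpha|+|\beta|=\max(|2r-1|,|2p-1|)=C<1$ exactly when $(p,r)\in(0,1)^2$; both $T_0$ and $T_1$ preserve the cardinality of the index set pairwise, so nonemptiness propagates and the induction goes through. The only cosmetic slip is the order of composition (one wants $T_0^* T_1^*$ on functions to match $\mu_{2}=T_1T_0\mu_0$), but the argument is symmetric in the two pairings, so this is immaterial.

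This is a genuinely different route from the paper's. The paper does not prove Proposition~\ref{prop:ergo} directly on the edge process: it passes through the face $2$-coloring and the TPCA $\PCA{A_8}$ of Section~\ref{sec:PCA8V}, shows that $\Theta_8$ intertwines the dynamics (Lemma~\ref{lem:2colPas}), and then invokes ergodicity of $\PCA{A_8}$ (Lemma~\ref{lem:ergoFace}), whose proof is deferred to the companion paper~\cite{CM17}. Your argument bypasses the TPCA machinery entirely, is elementary, and is quantitative: the rate $C=\lambda(p,r)$ you obtain is exactly the constant of Theorem~\ref{thm:asympt} and Proposition~\ref{prop:vitesse}, which is a pleasant consistency check. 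What the paper's detour buys is the connection to the PCA framework developed in Section~\ref{sec:PCA}, which has independent structural interest; what your approach buys is a short, self-contained proof with the sharp exponential rate falling out for free.
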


Proposition~\ref{cor:inv2} generalizes the following one of~\cite{KDN90} about $6$-vertex model.
\begin{proposition}[\cite{KDN90}] \label{cor:inv3}
Suppose that $a+c=b$. Let $O \sim \overline{P}^{6,q}_\infty = \mathcal{L}(\PM{q};\frac{a}{a+c},1)$ (limit law of $\overline{P}^{6,q}_N$ when $N \to \infty$). Then, for any $t$, $(e(i,t): i\in \ZZ) \sim \PM{q}$.
\end{proposition}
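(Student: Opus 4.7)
The law $\mathcal{L}(\PM{q};\tfrac{a}{a+c},1)$ is constructed in Definition~\ref{def:loiqpr} by starting from $(e(i,0):i\in\ZZ)\sim\PM{q}$ and iteratively applying the Markov operators $T_0$ and $T_1$ with parameters $(p,r)=(\tfrac{a}{a+c},1)$. By induction on $t$, it therefore suffices to show that each of $T_0$ and $T_1$ preserves $\PM{q}$ whenever $r=1$.

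I would handle $T_0$ in detail; the argument for $T_1$ is identical up to a shift of the pair indexing. From the definition of $T_0$, two structural facts are immediate: conditionally on the input line, the output pairs $(e(2i,t+1),e(2i+1,t+1))$ are mutually independent, and each such output pair is a function only of the corresponding input pair $(e(2i,t),e(2i+1,t))$. Under $\PM{q}$ the input pairs themselves are i.i.d., so by these two facts the output pairs are also i.i.d., and the problem reduces to a single-pair verification: if $(X,Y)$ are i.i.d.\ $\bern{q}$, show that the image pair $(X',Y')$ under the local transition is again i.i.d.\ $\bern{q}$.

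The single-pair check is the only computation. With $r=1$, the two ``balanced'' input pairs $(0,0)$ and $(1,1)$ are fixed by the transition, so $P(X'=Y'=0)=(1-q)^2$ and $P(X'=Y'=1)=q^2$ as required. The two ``unbalanced'' input pairs $(0,1)$ and $(1,0)$ each occur with probability $q(1-q)$ under the i.i.d.\ input, and the $p$-transition merely swaps them with probability $p$; a swap between two equiprobable states preserves their (equal) output probabilities $q(1-q)$, regardless of $p$. This matches the i.i.d.\ $\bern{q}^{\otimes 2}$ distribution, closing the induction.

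No step is delicate; the whole content is that $r=1$ kills the ``$(k,k)\leftrightarrow(1-k,1-k)$'' transitions. This is also what distinguishes the present 6-vertex statement from Proposition~\ref{cor:inv2}: once $r<1$, the interchange between balanced pairs demands $P(X=Y=0)=P(X=Y=1)$, which forces $q=1/2$; the 6-vertex hypothesis $d=0$ (equivalently $r=1$) removes this constraint and allows any Bernoulli parameter $q$.
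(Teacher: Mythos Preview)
Your proof is correct and is precisely the direct verification the paper has in mind when it says that Propositions~\ref{cor:inv2} and~\ref{cor:inv3} ``are easy to prove by coming back to Definition~\ref{def:loiqpr}'': you show by induction that each operator $T_u$ with $r=1$ preserves $\PM{q}$ via the single-pair check. The paper also records an alternative route through the TPCA $\PCA{A_6}$ (Proposition~\ref{prop:3col} combined with Lemma~\ref{lem:3colPas}), but your argument matches its primary one.
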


Propositions~\ref{cor:inv2} and~\ref{cor:inv3} are easy to prove by coming back to Definition~\ref{def:loiqpr}. Proposition~\ref{prop:ergo} is more complicated and is proved in Section~\ref{sec:PCA} using new results about probabilistic cellular automata.\par

\subsection*{Edge correlation function of the 8-vertex model when $a+c=b+d$ and its special case the 6-vertex model when $a+c=b$.}
Let $\mu$ be any probability distribution on $\{0,1\}^\ZZ$. Let $\displaystyle O \sim \mathcal{L} \left(\mu;\frac{a}{a+c},\frac{b}{b+d}\right)$. The edge correlation function is the function $C((i,t);(i',t'))$ defined by, for any $(i,t), (i',t') \in \overline{E}_\infty$,
\begin{align}
C((i,t);(i',t')) & = \frac{\cov{e(i,t)}{e(i',t')}}{\sqrt{\var{e(i,t)}}  \sqrt{\var{e(i',t')}}}\\
& = \frac{\esp{}{e(i,t) e(i',t')} - \esp{}{e(i,t)} \esp{}{e(i',t')}}{\sqrt{\var{e(i,t)}}  \sqrt{\var{e(i',t')}}}. \nonumber
\end{align} 
We can remark that, as $e(i,t) \in \{0,1\}$ for any $(i,t)$, knowing $C((i,t),(i',t'))$ is equivalent to knowing the joint law of $e(i,t)$ and $e(i',t').$

Our main objective is to determine $C_8$ that is the edge correlation function $C$ of the 8-vertex model when $a+c=b+d$ and with FBC, i.e. when $O \sim \overline{P}^8_\infty$.

First of all, $C_8$ has some invariance properties:
\begin{proposition}  \label{prop:CInvT}
For any $(i,t), (i',t') \in \overline{E}_\infty$,
\begin{align} 
C_8((i,t);(i',t')) & = C_8((i',t');(i,t)) \label{eq:CInvT0}, \\
C_8((0,t);(i',t')) & = C_8((1,t);(1-i',t'))  \label{eq:CInvT1}, \\
C_8((i,t);(i',t')) & = C_8((i+2,t);(i'+2,t')) \label{eq:CInvT2}, \\
C_8((i,t);(i',t')) & = C_8((i+1,t+1);(i'+1,t'+1)) \label{eq:CInvT3}.
\end{align}
\end{proposition}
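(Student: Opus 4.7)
The plan is to derive each of the four identities from a corresponding invariance of the joint law $\overline{P}^8_\infty=\mathcal{L}(\PM{1/2};\tfrac{a}{a+c},\tfrac{b}{b+d})$ under a symmetry of $\overline{K}_\infty$; the identity on $C_8$ then follows because the correlation function depends only on the joint distribution of $(e(i,t),e(i',t'))$. Identity~(\ref{eq:CInvT0}) is immediate from the symmetry of covariance and variance in their arguments.

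For~(\ref{eq:CInvT2}), let $\sigma$ be the shift on $\{0,1\}^{\ZZ}$ defined by $(\sigma\eta)(i)=\eta(i+1)$. Both pair families $\{\{2i,2i+1\}:i\in\ZZ\}$ (acted on by $T_0$) and $\{\{2i-1,2i\}:i\in\ZZ\}$ (acted on by $T_1$) are globally preserved by $\sigma^2$, and the transition rules depend only on the ordered pair, so $\sigma^2 T_0=T_0\sigma^2$ and $\sigma^2 T_1=T_1\sigma^2$ as Markov kernels; combined with $\sigma^2\PM{1/2}=\PM{1/2}$, this yields invariance of the joint law under the shift $(i,t)\mapsto(i+2,t)$, hence~(\ref{eq:CInvT2}). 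For~(\ref{eq:CInvT1}), I would use the reflection $(\rho\eta)(i)=\eta(1-i)$: each of the two pair families is globally preserved by $i\mapsto 1-i$, while $\rho$ reverses the order inside each pair. The rules of Definition~\ref{def:loiqpr} are symmetric under this swap, as the $(k,k)$-rules are trivially so, and the image of the $(k,1-k)$-rule under the swap coincides with the same rule with $k$ replaced by $1-k$. Combined with $\rho\PM{1/2}=\PM{1/2}$, this gives~(\ref{eq:CInvT1}).

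The less obvious point is~(\ref{eq:CInvT3}). The key observation is the semi-conjugation $\sigma T_0=T_1\sigma$ and $\sigma T_1=T_0\sigma$: the unit shift $\sigma$ bijects $T_0$-pairs onto $T_1$-pairs while the common transition rule is preserved. Setting $\eta_t=(e(i,t))_{i\in\ZZ}$ and $\eta_t'=\sigma\eta_{t+1}=(e(i+1,t+1))_{i\in\ZZ}$, these identities imply that $(\eta_t')_{t\geq 0}$ obeys the same time-inhomogeneous Markov chain as $(\eta_t)_{t\geq 0}$: for $t$ even, $\eta_{t+1}'=\sigma T_1\eta_{t+1}=T_0\sigma\eta_{t+1}=T_0\eta_t'$, and symmetrically for $t$ odd. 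It remains to match the initial laws: $\eta_0'=\sigma T_0\eta_0=T_1\sigma\eta_0$, and since $\sigma\eta_0\sim\PM{1/2}$, Proposition~\ref{cor:inv2} gives $\eta_0'\sim T_1\PM{1/2}=\PM{1/2}$, i.e.\ the law of $\eta_0$. Hence $(\eta_t')$ and $(\eta_t)$ have the same joint law, yielding~(\ref{eq:CInvT3}). The only nontrivial ingredient is this last step, which relies on $\PM{1/2}$ being invariant under $T_0$ and $T_1$ separately, not merely under their composition, a fact provided by Proposition~\ref{cor:inv2}.
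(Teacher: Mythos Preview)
Your proof is correct and follows the same approach the paper indicates: the paper states only that the proposition ``is a consequence of Proposition~\ref{cor:inv2}'' without further detail, and you have supplied exactly those details by identifying the relevant symmetries of $\overline{K}_\infty$ (shift by~$2$, reflection $i\mapsto 1-i$, and the diagonal shift $(i,t)\mapsto(i+1,t+1)$) and checking their compatibility with $T_0$, $T_1$, and the initial law $\PM{1/2}$. Your handling of~(\ref{eq:CInvT3}) via the semi-conjugation $\sigma T_0=T_1\sigma$, $\sigma T_1=T_0\sigma$ together with the fact that $T_0$ and $T_1$ each separately preserve $\PM{1/2}$ (which is what Proposition~\ref{cor:inv2} provides) is precisely the nontrivial point.
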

This is a consequence of Proposition~\ref{cor:inv2}. Proposition~\ref{prop:CInvT} permits to determine $C_8$ for any values $(i,t)$ and $(i',t')$ if the set $\{C_8((0,0);(i,t)): (i,t) \in \overline{E}_\infty)\}$ is known. Hence, in the following, we determine and denote $C_8(i,t) = C_8((0,0);(i,t))$.\par

For the 6-vertex model under similar conditions, these invariant properties was already proved in~\cite{KDN90}. And, moreover, the edge correlation function of the 6-vertex model under some conditions was evaluated in the same article:
\begin{theorem}[\cite{KDN90}] \label{thm:KDN}
For any $q \in (0,1)$, let $O \in \overline{\Omega}^6_\infty$ distributed according to $\overline{P}^{6,q}_\infty$ (the limit law of $\overline{P}^{6,q}_N$, laws of 6-vertex model on $\overline{K}_N$ with $\BC{PMBC}{q}$). If $a+c=b$, the edge correlation function $C(i,t)$ is 
\begin{equation}
C(i,2t) = \begin{cases} \displaystyle \frac{1}{2^{2t}} \binom{2t-1}{(2t-i-\Delta(i))/2} & \text{if } 2t \geq i + \Delta(i),\\
0 & \text{else},
\end{cases}
\end{equation}
with $\Delta(i) = \begin{cases} 1 & \text{if $i$ is odd,} \\ 2 & \text{if $i$ is even.} \end{cases}$

Moreover, for any $i$, when $t \to \infty$,
\begin{equation} \label{eq:asyKDN}
C(i,2t) = \Theta \left( t^{-1/2} \right).
\end{equation}
\end{theorem}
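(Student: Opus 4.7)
The plan is to reduce $C$ to a return probability of a random walk (the \emph{source walk}) and then identify that probability with the claimed binomial. For the 6-vertex model $r = 1$ means that the pair $(e(2k,t+1),e(2k+1,t+1))$ is always a permutation of $(e(2k,t),e(2k+1,t))$: it either equals it (probability $1-p$) or is its swap (probability $p$, where $p = a/(a+c)$). Coupling each such transition with an independent $\bern{p}$ swap coin makes the permutation measurable with respect to the coins. Tracing backwards from $(i, 2t)$ therefore produces a random position $Y \in \ZZ$, depending only on the coins, with $e(i, 2t) = e(Y, 0)$ a.s. Since the coins are independent of $(e(\cdot, 0)) \sim \PM{q}$ and $e(0,0)$ is Bernoulli, a short computation using $e(0,0)^2 = e(0,0)$ yields
\[
\cov{e(0,0)}{e(i,2t)} = q(1-q) \, \prob{Y = 0},
\]
hence $C(i, 2t) = \prob{Y = 0}$, which reduces the problem to a random-walk probability.

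Next, $Y$ is explicit: starting at $i$, at each backwards step $Y$ stays with probability $1 - p$ or jumps to its partner with probability $p$, the partner being defined by the alternation of the pair structures $(2k-1, 2k)$ and $(2k, 2k+1)$ according to the parity of the time coordinate. One checks that two consecutive steps make the block $\lfloor Y/2 \rfloor$ perform a lazy nearest-neighbour walk, while the position of $Y$ inside its current block is determined by the type of the last block transition: an ``up'' transition lands on the even site of the new block, a ``down'' transition on the odd site, and a ``stay'' places $Y$ uniformly on the two sites of the same block.

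Finally, conditioning $\prob{Y_{2t} = 0 \mid Y_0 = i}$ on the terminal block $\lfloor Y_{2t}/2\rfloor$ and on the type of the last block transition expresses the answer as a sum of at most three products of block-walk probabilities with weights $1/2$ or $1$; Pascal's identity $\binom{n-1}{k-1} + \binom{n-1}{k} = \binom{n}{k}$ then collapses these into a single binomial coefficient, and matching the parity of $i$ to the parity of the terminal landing site produces the shift $\Delta(i)$. The support condition $2t \ge i + \Delta(i)$ is the geometric statement that $Y$ cannot reach $0$ from $i$ in fewer than $i + \Delta(i)$ steps, and the asymptotic $C(i, 2t) = \Theta(t^{-1/2})$ follows from Stirling's approximation applied to the binomial. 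The main obstacle is this last step: the landing parity inside the terminal block mixes with the parity of $i$, and matching the Pascal collapse to the two cases of $\Delta(i)$ with the correct value $1$ or $2$ is where most of the combinatorial care lies.
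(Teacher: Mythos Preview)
The paper does not prove this statement: it is quoted from \cite{KDN90} as background, and the paper's own contribution is the generalization to the 8-vertex case (Theorem~\ref{thm:Cit}). There is therefore no ``paper's proof'' to compare against directly. That said, your reduction is exactly the $r=1$ specialization of the machinery the paper builds for its main theorem: since $r=1$ forces each local update to be a permutation (identity or swap) of the pair, the second coordinate of the walk $X$ of Definition~\ref{def:XMA}/Remark~\ref{rem:ordre} never flips, and \eqref{eq:COT2} collapses to $C(i,t)=\prob{X_t=i\mid X_0=0}$. Your covariance computation extending this to arbitrary $q$ (using that $\PM{q}$ is invariant, Proposition~\ref{cor:inv3}) is correct.

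Where your argument breaks is the final combinatorial step, and the break is not cosmetic. The source walk you describe moves $X_{t+1}=X_t+(-1)^{X_t+t}$ with probability $p=a/(a+c)$ and stays with probability $1-p$; two such steps do \emph{not} give a symmetric lazy walk on blocks unless $p=1/2$. Concretely, from $X_0=0$ one gets $\prob{X_2=-1}=p(1-p)$, $\prob{X_2=0}=(1-p)^2$, $\prob{X_2=1}=p(1-p)$, $\prob{X_2=2}=p^2$, so the block walk has a strictly positive drift when $p\neq 1/2$. In particular $C(0,2)=(1-p)^2$ (which also matches Theorem~\ref{thm:Cit} with $r=1$), not $1/4$ as the displayed formula would give, and $C(2,2)=p^2\neq 0$. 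Consequently your Pascal-identity collapse to $\tfrac{1}{2^{2t}}\binom{2t-1}{\cdot}$ cannot go through for general $p$, and the $\Theta(t^{-1/2})$ asymptotic would fail as well (the drift makes $\prob{X_{2t}=i}$ decay exponentially). Either the statement you are proving carries an implicit extra hypothesis $a=c$ (i.e.\ $p=1/2$) from \cite{KDN90} that was dropped in transcription, or the displayed formula should carry factors of $p$ and $1-p$; in either case you need to revisit the block-walk step with the correct asymmetric transition probabilities rather than assuming symmetry.
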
\par
In their article, they consider two lines by two lines, that's why the edge correlation function is the one of $C(i,2t)$ instead of $C(i,t)$.\par

In our paper, the main objective is to give the edge correlation function $C_8(i,t)$ of the 8-vertex model on $\overline{K}_\infty$ with FBC and $a+c=b+d$. Just before to state the main theorem, we introduce some notations that are used all along the paper:
\begin{equation}
p = \frac{a}{a+c},\ r = \frac{b}{b+d} \text{ and}
\end{equation}
\begin{equation} \label{eq:DDP}
\Delta = 1 - (p+r),\ D = r-p ,\ P = (2p-1)(2r-1).
\end{equation}

\begin{theorem} \label{thm:Cit}
The edge correlation function $C_8$ of the 8-vertex model on $\overline{K}_\infty$ with FBC and $a+c=b+d$ (i.e. when $O \sim \overline{P}^8_\infty$) is:
\begin{itemize}[topsep=0pt,itemsep=0pt,partopsep=0pt,parsep=0pt]
\item if $i+t$ is odd,
\begin{equation}
C_8(i,t) = (-1)^{t+1} D \sum_{k=0}^{\frac{t-1-|i|}{2}} (-1)^k \binom{t-1-k}{k,\frac{t-1+i}{2}-k,\frac{t-1-i}{2}-k} \Delta^{t-1-2k} P^k; \label{eq:Citi}
\end{equation}

\item if $t$ is even and $i=0$,
\begin{equation}
C_8(0,t) = \sum_{k=0}^{t/2} (-1)^{k} \binom{t-1-k}{\frac{t}{2}-k} \binom{t/2}{k} \Delta^{t-2k} P^k; \label{eq:Ci0tp}
\end{equation}

\item if $i+t$ is even and $i<0$,
\begin{equation}
C_8(i,t) = \left( \sum_{k=0}^{\frac{t+i}{2}-1} (-1)^{t+k} \binom{t-1-k}{\frac{t-i}{2}-k} \binom{\frac{t+i}{2}}{k} \Delta^{t-2k} P^k \right)+ (-1)^{\frac{t-i}{2}} \binom{\frac{t-i}{2}-1}{\frac{t+i}{2}-1} \Delta^{-i} P^{\frac{t+i}{2}};
\end{equation}

\item if $i+t$ is even and $i>0$,
\begin{equation}
C_8(i,t) = \left(\sum_{k=0}^{\frac{t-i}{2}-1} (-1)^{t+k} \binom{t-1-k}{\frac{t-i}{2}-k} \binom{\frac{t+i}{2}}{k} \Delta^{t-2k} P^k\right) + (-1)^{\frac{t+i}{2}} \binom{\frac{t+i}{2}}{\frac{t-i}{2}} \Delta^{i} P^{\frac{t-i}{2}}.
\end{equation}
\end{itemize}
\end{theorem}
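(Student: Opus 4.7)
The plan is to reformulate the dynamics on $\pm 1$-valued variables, derive a first-order linear recursion for $C_8(i,t)$, and then resolve it by a weighted path count.

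Set $X(i,t) = 2e(i,t)-1 \in \{-1,+1\}$. By Proposition~\ref{cor:inv2} each $X(i,t)$ is centered with unit variance, so
\[
C_8(i,t) = E\bigl[X(0,0)\,X(i,t)\bigr].
\]
The decisive observation is that in $X$-coordinates the update rule of Definition~\ref{def:loiqpr} takes the very clean form
\[
(X(a,t{+}1),X(b,t{+}1)) = \sigma\cdot(X(a,t),X(b,t)),\qquad \sigma\in\{-1,+1\},
\]
for each pair $\{a,b\}$ updated simultaneously, with
\[
E\bigl[\sigma \mid X(a,t),X(b,t)\bigr] = D - \Delta\,X(a,t)X(b,t),
\]
where $D=r-p$ and $\Delta=1-(p+r)$: each of the four cases in Definition~\ref{def:loiqpr} either preserves or negates the pair, and averaging the resulting $\pm 1$ sign yields this affine function of the product $X(a,t)X(b,t)$.

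Writing $X(i,t) = \sigma\cdot X(i,t-1)$, multiplying by $X(0,0)$, conditioning on $(X(i,t-1), X(j,t-1))$ and using $X(i,t-1)^2 = 1$ yields the linear recursion
\[
C_8(i,t) = D\cdot C_8(i,t-1) - \Delta\cdot C_8(j,t-1),\qquad j = i + (-1)^{i+t-1},
\]
with $C_8(i,0) = \delta_{i,0}$; here $j$ is the partner of $i$ in the pair updated at step $t-1 \to t$. Iterating represents $C_8(i,t)$ as a signed sum over lattice paths $(0,0)=(i_0,0),(i_1,1),\dots,(i_t,t)=(i,t)$ where at each step one either \emph{stays} ($i_{s+1}=i_s$, weight $D$) or \emph{jumps to the partner} ($i_{s+1}=i_s+(-1)^{i_s+s}$, weight $-\Delta$).

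Two combinatorial facts drive the remaining computation. First, every stay flips the parity of $i_s+s$ while every jump preserves it, so any admissible path from $(0,0)$ to $(i,t)$ contains a number of stays whose parity equals that of $i+t$. Second, the identity $D^2 = \Delta^2 - P$ allows one to replace consecutive pairs of stays by either a $\Delta^2$-block or a $-P$-block, leaving at most a single residual factor $D$, present precisely when $i+t$ is odd, which matches the prefactor $(-1)^{t+1}D$ of~\eqref{eq:Citi}. The task reduces to enumerating lattice paths by their numbers of $\Delta^2$-blocks, $-P$-blocks and isolated $-\Delta$-jumps ending at $(i,t)$.

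The main obstacle is the geometry of this enumeration, especially in the case $i+t$ even: paths that remain strictly inside the light cone $|i_s|<s$ contribute the sum in each formula, while paths that reach one of the extremal diagonals $i_s=\pm s$ (where one of the two moves becomes unavailable) contribute the isolated boundary term $(-1)^{(t\pm i)/2}\binom{(t\pm i)/2}{(t\mp i)/2-1}\Delta^{\pm i}P^{(t\mp i)/2}$ of the last two cases. A technically cleaner alternative is induction on $t$: substituting each of the four closed-form expressions into the recursion above, collecting like powers of $\Delta$ and $P$, and using $D^2=\Delta^2-P$ reduces the verification to a Pascal-type identity on (multi)-binomial coefficients, the base cases $t=0,1$ being immediate.
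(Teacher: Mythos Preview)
Your approach is correct and genuinely different from the paper's. The paper first passes through a coupling argument (its Lemma~\ref{lem:couplage}) to obtain a non-homogeneous random walk $(X_t)$ on $\ZZ\times\{0,1\}$, expresses $C_8(i,t)$ as a difference of transition probabilities of that walk, packages those into a two-variable rational generating function (Proposition~\ref{prop:C8V}), and finally extracts coefficients. Your $\pm 1$ encoding collapses the colour coordinate from the start: the recursion
\[
C_8(i,t)=D\,C_8(i,t-1)-\Delta\,C_8\bigl(i+(-1)^{i+t-1},\,t-1\bigr)
\]
is precisely what one obtains from the paper's walk after forming $P(X_t=(i,1))-P(X_t=(i,0))$, but you derive it without any coupling, directly from the observation that the local update multiplies the pair $(X(a,t),X(b,t))$ by a random sign with conditional mean $D-\Delta\,X(a,t)X(b,t)$. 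This is more elementary, and the weighted-path representation you obtain is the same lattice of Figure~\ref{fig:path}; the paper resolves it via generating functions, you propose to resolve it by direct enumeration or induction.

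Two points. First, your sentence about the extremal diagonals (``one of the two moves becomes unavailable'') is not accurate: both moves are always available; what is special is that the diagonal $i_s=s$ is reached only by paths with no stay at all, and this asymmetry between run counts is what produces the isolated monomial in the even-$i{+}t$ formulas. Second, and more importantly, the proposal stops short of an actual proof: neither the path count nor the induction is carried out. For the record, the odd case does close cleanly along the lines you sketch: with $m=2\ell+1$ stays the path is an alternating sequence of $\ell{+}1$ right-runs and $\ell{+}1$ left-runs of total lengths $(t-1+i)/2$ and $(t-1-i)/2$, giving $\binom{\alpha}{\ell}\binom{\beta}{\ell}$ paths (with $\alpha=(t-1+i)/2$, $\beta=(t-1-i)/2$); expanding $D^{2\ell+1}=D(\Delta^2-P)^\ell$ and summing over $\ell$ reduces to the Vandermonde-type identity
\[
\sum_{\ell\ge k}\binom{\alpha}{\ell}\binom{\beta}{\ell}\binom{\ell}{k}
=\binom{\alpha+\beta-k}{k,\,\alpha-k,\,\beta-k}.
\]
The even cases with $i\neq 0$ are genuinely messier because the numbers of right-runs and left-runs differ by one, and this is exactly what produces the extra term; you should write this computation out rather than invoke an unspecified ``Pascal-type identity''. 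The paper's generating-function route avoids that case analysis at the price of heavier machinery.
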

\par
\begin{remark} \label{rem:EASY}
There are some particular cases for which $C_8$ is simpler:
\begin{itemize}[topsep=0pt,itemsep=0pt,partopsep=0pt,parsep=0pt]
\item if ($i \geq 0$ and $t \leq i-1$) or ($i \leq -1$ and $t \leq -i$) (i.e. the edge $(i,t)$ is not in a kind of cone starting from $(0,0)$, see Figure~\ref{fig:cone}), then $C_8(i,t)=0$.
\item if $a=d$ and $b=c$ (i.e. $p = 1- r$, and so $\Delta = 0$), then 
\begin{displaymath}
C_8(i,t) = \begin{cases}
0 & \text{if } i \neq 0, \\
(1-2p)^t & \text{if } i=0.
\end{cases}
\end{displaymath}
\item if $a=b$ and $c=d$ (i.e. $p=r$, and so $D=0$), then
\begin{displaymath}
C_8(i,t) = \begin{cases}
0 & \text{if } i \neq t, \\
(2p-1)^t & \text{if } i=t.
\end{cases}
\end{displaymath}
\item if $a=c$ (i.e. $p=1/2$, and so $P=0$), then
\begin{displaymath}
C_8(i,t) = \begin{cases}
\left(-1/2\right)^t\ \binom{t-1}{(t-i)/2}\ (1-2r)^t& \text{if $i$ is even},\\
(-1/2)^{t}\ \binom{t-1}{(t-1-i)/2}\ (1-2r)^{t} & \text{if $i$ is odd}.
\end{cases}
\end{displaymath}
\item if $b=d$ (i.e. $r=1/2$, and so $P=0$), then
\begin{displaymath}
C_8(i,t) = \begin{cases}
(-1/2)^t\ \binom{t-1}{(t-i)/2}\ (1-2p)^t& \text{if $i$ is even},\\
-(-1/2)^{t}\ \binom{t-1}{(t-1-i)/2}\ (1-2p)^{t} & \text{if $i$ is odd}.
\end{cases}
\end{displaymath}

\end{itemize}
\end{remark}

\begin{figure}
\begin{center}
\includegraphics{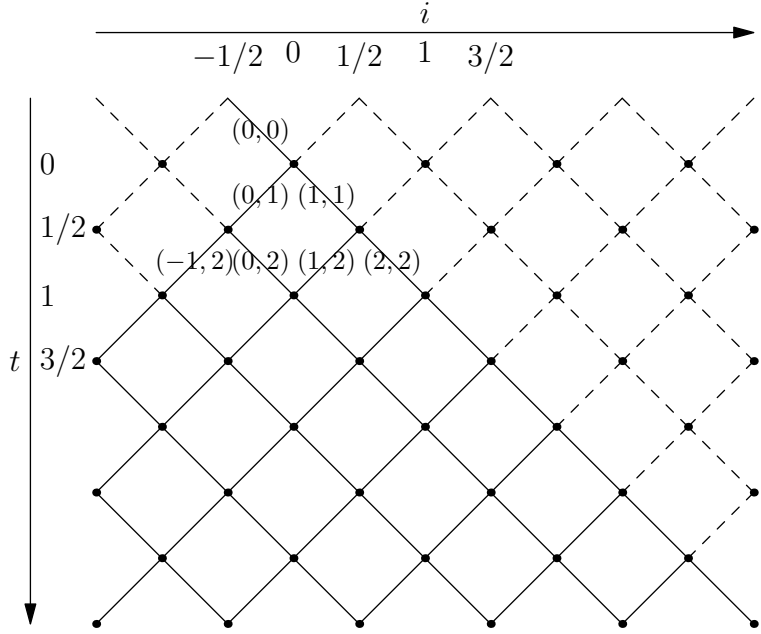}
\end{center}
\caption{The ``influence cone'' of edge $(0,0)$. In full line: the set of edges for which $C_8(i,t) \neq 0$ in general. In dashed line: the set of edges for which $C_8(i,t) = 0$ when $a+c=b+d$.}
\label{fig:cone}
\end{figure}

Moreover, we obtain the asymptotic of $C_8(i,t)$ for any $i$ when $t \to \infty$.
\begin{theorem} \label{thm:asympt}
For any $i$,
\begin{itemize}[topsep=0pt,itemsep=0pt,partopsep=0pt,parsep=0pt]
\item if $a=d$ and $b=c$ (i.e.\ $p+r=1$), for any $t$,
\begin{displaymath}
C_8(i,t) = \begin{cases}
0 & \text{if } i \neq 0, \\
(1-2p)^t & \text{if } i=0.
\end{cases}
\end{displaymath}

\item else, when $t \to \infty$,
\begin{equation}
C_8(i,t) = O \left(\frac{\lambda(p,r)^t}{\sqrt{t}} \right)
\end{equation}
with 
\begin{equation} \label{eq:lambda}
\lambda(p,r) = \max \left( |1-2p|,|1-2r|\right).
\end{equation}
\end{itemize}
\end{theorem}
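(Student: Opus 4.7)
The first case of Theorem~\ref{thm:asympt}, namely $a=d$ and $b=c$ (equivalently $p+r=1$, i.e.\ $\Delta=0$), is already recorded in Remark~\ref{rem:EASY}, so I would simply cite it and focus on the generic case. For the rest I reparametrise by setting $\alpha=1-2p$ and $\beta=1-2r$, so that $\Delta=(\alpha+\beta)/2$, $D=(\alpha-\beta)/2$ and $P=\alpha\beta$. The engine of the entire argument is the elementary identity
\[
|\Delta|+|D|=\frac{|\alpha+\beta|+|\alpha-\beta|}{2}=\max(|\alpha|,|\beta|)=\lambda(p,r),
\]
which is exactly what converts the exponential rate coming from a generating function into the decay rate~\eqref{eq:lambda} claimed in the theorem.

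Next I would repackage the four cases of Theorem~\ref{thm:Cit} as coefficients of a single rational generating function,
\[
\phi(x,y)=\frac{1}{(1-\Delta x)(1-\Delta y)-D^2 xy},
\]
whose denominator equals $1-\Delta(x+y)+Pxy$ because $\Delta^2-D^2=P$. Expanding $\phi$ as a geometric series in $D^2xy/\bigl((1-\Delta x)(1-\Delta y)\bigr)$ yields the clean formula
\[
[x^my^n]\phi=\Delta^{m+n}\sum_{k\geq 0}\binom{m}{k}\binom{n}{k}\rho^k,\qquad \rho:=D^2/\Delta^2,
\]
after which it is a matter of matching monomials in $\Delta$ and $P$ to check that, with $m=(t-1+i)/2$, $n=(t-1-i)/2$, the sum~\eqref{eq:Citi} equals $(-1)^{t+1}D\cdot[x^my^n]\phi$ in the $i+t$ odd case, while the three $i+t$ even sub-cases of Theorem~\ref{thm:Cit} all collapse into $(-1)^t\cdot[x^{(t+i)/2}y^{(t-i)/2}]\bigl((1-\Delta y)\phi(x,y)\bigr)$. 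The correction factor $(1-\Delta y)$ is precisely what absorbs the isolated boundary-type term $(-1)^{(t+i)/2}\binom{(t+i)/2}{(t-i)/2}\Delta^iP^{(t-i)/2}$ appearing in the $i>0$ line (and its mirror when $i<0$).

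The asymptotic statement thereby reduces to bounding $\Sigma(m,n):=\sum_k\binom{m}{k}\binom{n}{k}\rho^k$ in the regime $m,n\to\infty$ with $m-n=O(1)$. By standard coefficient extraction (Egorychev method),
\[
\Sigma(m,n)=[z^n](1+z)^m(z+\rho)^n=\frac{1}{2\pi i}\oint\frac{(1+z)^m(z+\rho)^n}{z^{n+1}}\,dz,
\]
and I would apply the saddle-point method on the circle $|z|=z^*$, where $z^*$ is the positive root of $\frac{m}{1+z}+\frac{n}{z+\rho}=\frac{n}{z}$. For $m=n$ this root is $z^*=\sqrt{\rho}$ and for $m-n=O(1)$ it stays bounded away from $0$ and $\infty$; Gaussian integration around it yields $\Sigma(m,n)=O\bigl((1+\sqrt{\rho})^{m+n}/\sqrt{m+n}\bigr)$. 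Multiplying by $|\Delta|^{m+n}$ and using the identity $|\Delta|(1+\sqrt{\rho})=|\Delta|+|D|=\lambda$ from the first paragraph finally gives $\bigl|[x^my^n]\phi\bigr|=O(\lambda^t/\sqrt{t})$; the same estimate for $[x^my^n]\bigl((1-\Delta y)\phi\bigr)$ follows at once, and Theorem~\ref{thm:asympt} is obtained after absorbing the harmless factor $|D|$ (odd case) or $1$ (even case).

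The main obstacle, in my view, is the combinatorial unification step: recognising that the four sub-cases of Theorem~\ref{thm:Cit} are coefficients of the same $\phi$, or of $(1-\Delta y)\phi$, and in particular that the isolated extra terms in the $i+t$ even formulas are precisely what the $(1-\Delta y)$ correction produces. Once this reorganisation is in hand the saddle-point estimate itself is textbook. A minor additional difficulty is the degenerate values $\rho=0$ and $\rho=\infty$ (i.e.\ $\Delta=0$ or $D=0$, both covered by Remark~\ref{rem:EASY}): in each of them the sum $\Sigma(m,n)$ collapses to a single term and the explicit expressions of Remark~\ref{rem:EASY} already supply the desired (in fact sharper) bound, so these boundary cases simply have to be carved out at the start.
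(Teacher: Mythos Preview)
Your argument is correct and takes a genuinely different route from the paper's.

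The paper proceeds in two stages. First it treats the degenerate line $r=0$ by \emph{probabilistic} means: it identifies $C_8(i,t)$ with hitting probabilities of the walk $X$ (Proposition~\ref{prop:X2t}), decomposes the associated chain $Y$ into a simple random walk at a binomial random time plus an independent perturbation (Lemma~\ref{lem:eqloi}), and invokes a local-limit bound (Lemma~\ref{lem:MAnotsimple}). Second, it observes that $C_8(0,t)$ depends on $(p,r)$ only through $\Delta$ and the single ratio $H(p,r)=P/\Delta^2$, rewrites the formulas of Theorem~\ref{thm:Cit} as $\Delta^t M_n(H(p,r))$ for explicit polynomials $M_n$, and transports the $r=0$ asymptotics along the conic $H(p,r)=K$ to general $(p,r)$ via the parametrisation $m(K)=1/(1+\sqrt{1-K})$. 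Only the case $i=0$ is written out; the paper explicitly says the other values of $i$ are ``similar but with some sections more technical''.

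Your approach bypasses both stages. The substitution $X=-lx^2$, $Y=-l$ turns the rational fraction of Proposition~\ref{prop:C8V} directly into $(1-\Delta Y)\phi(X,Y)+lxD\,\phi(X,Y)$, which is your ``unification'' in one line (so the obstacle you flag is lighter than you suggest: it is really a change of variables in~\eqref{eq:Cit} rather than a term-by-term match with Theorem~\ref{thm:Cit}). From there the saddle-point estimate on $\Sigma(m,n)=\sum_k\binom{m}{k}\binom{n}{k}\rho^k$ at $z^*=\sqrt{\rho}$ is standard, and your key identity $|\Delta|(1+\sqrt{\rho})=|\Delta|+|D|=\lambda(p,r)$ is exactly the algebraic content of the paper's computation of $|1-(p+r)|/m(H(p,r))$. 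What you gain is a uniform treatment of all $i$ at once and no probabilistic detour; what the paper's route gains is a probabilistic interpretation of the bound in the $r=0$ case and an explicit ``universality'' statement along the conics $H(p,r)=\text{const}$.
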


For $d=0$, we find~\eqref{eq:asyKDN}, the asymptotic result of~\cite{KDN90}, that is a square-root decreasing of the edge correlation function in the 6-vertex model. In generic case of 8-vertex model when $a,b,c,d$ are all different of $0$, decreasing becomes exponential. 

In Theorem~\ref{thm:Cit} and~\ref{thm:asympt}, the regime is supposed to be stationary. When the regime is not stationary, i.e. when we start with any initial law $\mu$ at time $0$, we can obtain bounds on $C((0,0);(i,t))$.
\begin{proposition} \label{prop:vitesse}
Let $\mu$ be any probability measure on $\{0,1\}^\ZZ$ and let $O \sim \mathcal{L}(\mu;p,r)$. If $a+c=b+d$, then the edge correlation function $C$ satisfies, for any $i$, any $t$,
\begin{equation}
\left| \sqrt{\frac{\var{e(i,t)}}{\var{e(0,0)}}}\, C((0,0);(i,t)) - C_8(i,t) \right| \leq 2 \left(\lambda(p,r)^{t-1-\lfloor t/2 \rfloor} + \lambda(p,r)^{\lfloor t/2 \rfloor} - \lambda(p,r)^{t-1}\right)
\end{equation}
with $\lambda(p,r) = \max \left( |1-2p|,|1-2r|\right)$.
\end{proposition}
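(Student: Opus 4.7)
The plan is to reduce the bound to a coupling estimate at an intermediate time $s\approx\lfloor t/2\rfloor$, combined with the spectral contraction of the pair-update operators $T_0,T_1$ on the Rademacher basis $g_j(\omega):=2\omega_j-1$.

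The first step is to simplify the left-hand side. Since $e(0,0)\in\{0,1\}$, the Bernoulli identity $\cov{X}{Y}/\var{X}=E[Y\mid X{=}1]-E[Y\mid X{=}0]$ converts the target quantity into
\begin{equation*}
\sqrt{\frac{\var{e(i,t)}}{\var{e(0,0)}}}\,C((0,0);(i,t))-C_8(i,t)=\sum_{k\in\{0,1\}}(-1)^{1-k}\bigl(\esp{\mu}{e(i,t)\mid e(0,0)=k}-\esp{*}{e(i,t)\mid e(0,0)=k}\bigr),
\end{equation*}
where $E_*$ denotes expectation under $\overline{P}^8_\infty$. The triangle inequality bounds its absolute value by $2\max_{k\in\{0,1\}}\bigl|\esp{\mu}{e(i,t)\mid e(0,0)=k}-\esp{*}{e(i,t)\mid e(0,0)=k}\bigr|$, producing the prefactor $2$.

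Second, for each fixed $k$ I would couple two PCA trajectories $(X^{(m)})_m$ and $(Y^{(m)})_m$ driven by a common family of pair-update noises, with $X^{(0)}\sim\mu(\cdot\mid e(0,0)=k)$ and $Y^{(0)}\sim\PM{1/2}(\cdot\mid e(0,0)=k)$; the conditional-expectation difference above is then bounded by $\prob{X^{(t)}_i\neq Y^{(t)}_i}$. The key ingredient—the same contraction that underlies Propositions~\ref{cor:inv2} and~\ref{prop:ergo}—is that, with $D:=r-p$ and $\Delta:=1-(p+r)$, the operator $T_0$ acts on characters by
\begin{equation*}
T_0 g_{2m}=Dg_{2m}-\Delta g_{2m+1},\qquad T_0 g_{2m+1}=-\Delta g_{2m}+Dg_{2m+1},
\end{equation*}
with eigenvalues $D\pm\Delta=1-2p,\,2r-1$ whose moduli are at most $\lambda(p,r)$; analogously for $T_1$ on $(g_{2m-1},g_{2m})$. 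From this I obtain two complementary estimates: a backward one, $E[X^{(t)}_i\mid X^{(s)}=\omega]=\tfrac12+\tfrac12\sum_jc_jg_j(\omega)$ with $\sum_jc_j^2\leq\lambda^{2(t-s)}$; and a forward one, that any pair-marginal deviation of the law of $X^{(s)}$ from $\PM{1/2}$ is damped by $\lambda^s$.

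Finally, setting $u:=\lfloor t/2\rfloor$ and $v:=t-1-u$, I would choose the split to match these exponents and write $\{X^{(t)}_i\neq Y^{(t)}_i\}\subseteq A\cup B$, where $A$ is the event that the backward chain of pair-influences from $(i,t)$ reaches, at the intermediate time, a site at which $X$ and $Y$ still disagree, and $B$ is the event that the initial discrepancy has survived the forward evolution up to the relevant pair. The two contractions yield $\prob{A}\leq\lambda^v$ and $\prob{B}\leq\lambda^u$; since $A$ and $B$ depend on disjoint blocks of pair-update noises they are independent, giving $\prob{A\cup B}\leq\lambda^u+\lambda^v-\lambda^{u+v}$. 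Combined with the prefactor $2$ this yields the claim. The main obstacle I anticipate is the bookkeeping around the alternation of $T_0$ and $T_1$: since a single site belongs to different pairs at successive times, the inductive composition of $2\times2$ contractions and the precise description of the cone of backward influence need to be tracked carefully; once that is set up, the inclusion–exclusion conclusion is immediate.
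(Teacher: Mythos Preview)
Your first reduction (the Bernoulli identity turning the left-hand side into a sum of two conditional-expectation differences) is correct and matches the paper. The spectral identity $T_0^*g_{2m}=Dg_{2m}-\Delta g_{2m+1}$, with eigenvalues $1-2p$ and $2r-1$, is also correct and is essentially a Fourier restatement of Lemma~\ref{lem:couplage}.

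The gap is in what you do next. You pass to a common-noise coupling and try to bound the coupling probability $\prob{X^{(t)}_i\neq Y^{(t)}_i}$ by spectral data, via vaguely defined events $A$ and $B$. But your ``backward'' and ``forward'' estimates are statements about \emph{expectations} (decay of the coefficients $c_j$, decay of marginal deviations), not about disagreement probabilities in a coupling whose initial pairing you never specify. There is no mechanism in your sketch that converts $\sum_j c_j^2\le\lambda^{2(t-s)}$ into $\prob{A}\le\lambda^{t-1-s}$, and the independence of $A$ and $B$ is asserted rather than derived from a concrete construction. The $\lfloor t/2\rfloor$ split is thus unmotivated.

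In fact the spectral route, used \emph{without} any coupling or time split, already closes: iterating the pullback gives $E[g_i(e(\cdot,t))\mid e(\cdot,0)=\omega]=\sum_j c_j g_j(\omega)$ with $\sum_j|c_j|\le(|D|+|\Delta|)^t=\lambda(p,r)^t$; since $E_*[g_j\mid e(0,0)=k]=0$ for $j\neq0$ and the $j=0$ term cancels, one gets directly $|E_\mu[e(i,t)\mid e(0,0)=k]-E_*[e(i,t)\mid e(0,0)=k]|\le\tfrac12\lambda^t$, hence a bound $\lambda^t$ on the whole quantity --- sharper than what is stated. The paper does not take this route: it builds a particle system $\mathcal{P}(\mu;p,r)$ (Definition~\ref{def:part}), refines the coupling of Lemma~\ref{lem:couplage} so that a particle's state is resampled to $\bern{1/2}$ whenever it performs a specific transition, and bounds the probability that this has \emph{not} happened twice with distinct neighbours by a geometric waiting-time calculation (Lemmas~\ref{lem:CitVitesse}--\ref{lem:vitesse}). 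The appearance of $\lfloor t/2\rfloor$ there is an artefact of the sum $\sum_{t'=1}^{\lfloor t/2\rfloor}\prob{T=t'}\prob{T_1\le t-2t'}$, not of a forward/backward split.
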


\subsection*{Content}
Section~\ref{sec:lqpr} is about links between 6 and 8-vertex models when $a+c=b+d$ and laws $\mathcal{L}(\mu;p,r)$. In particular, we express, in those cases, partition functions to prove Propositions~\ref{prop:8bc},~\ref{prop:limitlaw} and \ref{prop:SKN}.\par

In Section~\ref{sec:Cit}, Theorem~\ref{thm:Cit}, the main theorem of this paper, is proved. First, in Section~\ref{sec:P8V}, we establish and prove Proposition~\ref{prop:C8V} that relate $C_8(i,t)$ to a rational fraction. Proof of this proposition is based on a fundamental lemma (Lemma~\ref{lem:couplage}) that permits to understand precisely behaviors of correlations in 8-vertex model when $a+c=b+d$. Then, in Section~\ref{sec:extract}, we extract coefficients of this rational fraction to end the proof of Theorem~\ref{thm:Cit}. And, in Section~\ref{sec:vitesse}, we discuss about the influence of the boundary conditions and prove Proposition~\ref{prop:vitesse}.\par

Section~\ref{sec:asympt} concerns asymptotic of $C_8(i,t)$ and Theorem~\ref{thm:asympt} is proved. In Section~\ref{sec:asympt0}, case $r=0$ is done using properties on random walks. Then, in Section~\ref{sec:asymptg}, general case is proved using both results of Section~\ref{sec:asympt0} and Theorem~\ref{thm:Cit}.

Section~\ref{sec:PCA} is dedicated to some discussions about links between vertex models, colorings of plan and probabilistic cellular automata. In Section~\ref{sec:PCADT}, we introduce and define triangular probabilistic cellular automata (TPCA, a new type of PCA at the best knowledge of the author) and we establish theorems about their invariant probability distributions. In Section~\ref{sec:PCA8V}, we present a family of TPCA that permits to obtain laws $\mathcal{L}(\mu;p,r)$. Due to that, we obtain an alternative proof of Proposition~\ref{cor:inv2} and, most important, we get a proof of Proposition~\ref{prop:ergo}. In Section~\ref{sec:PCA6V}, we present a family of TPCA to obtain laws $\mathcal{L}(\mu;p,1)$ and we obtain an alternative proof of Proposition~\ref{cor:inv3}. In Section~\ref{sec:PACP}, theorems and properties stated in Section~\ref{sec:PCADT},~\ref{sec:PCA8V} and~\ref{sec:PCA6V} are proved.

Finally, in Section~\ref{sec:concl}, we conclude this article giving some additional commentaries on vertex models.

\section{From $\overline{K}_N$ to $\overline{K}_\infty$: laws $\mathcal{L}(\mu;p,r)$} \label{sec:lqpr}
The major aim of this section is to prove Propositions~\ref{prop:8bc}, ~\ref{prop:limitlaw} and~\ref{prop:SKN}. These propositions are in fact consequences of properties of partition functions of the 8-vertex model with $a+c=b+d$ and FBC on $\overline{K}_N$ and on $K_N$. 

\subsection{Partition function on $\overline{K}_N$}
\begin{lemma} \label{lem:pf8}
If $a+c=b+d$, then for any $N$, the partition function $\overline{Z}^{8}_{N}$ of the 8-vertex model on $\overline{K}_N$ with $\text{FBC}$ is
\begin{equation}
\overline{Z}^{8}_{N} = 2^{2N} (a+c)^{N(N+1)/2}
\end{equation}
\end{lemma}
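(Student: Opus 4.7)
The plan is to compute $\overline{Z}^8_N$ by summing orientations layer by layer, exploiting the identity $a+c=b+d$. From the explicit description of $\overline{V}_N$, the $N(N+1)/2$ internal vertices of $\overline{K}_N$ organise into horizontal layers $L_0, L_{1/2}, \ldots, L_{(N-1)/2}$ with $|L_{i/2}| = N-i$. Denote by $A_i$ the set of bottom edges of the vertices of $L_{i/2}$, so that $|A_i| = 2(N-i)$ and distinct vertices of the same layer have disjoint bottom-edge sets.

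The engine of the argument is the following local observation. For any internal vertex $v$ and any fixed orientation of its two top edges, the $8$-vertex parity constraint permits exactly two orientations of its two bottom edges; their weights are $\{a,c\}$ when the top edges have opposite orientations and $\{b,d\}$ when they agree (this is exactly what the transition tables for $T_0$ and $T_1$ record). Under $a+c=b+d$ the sum over the two bottom configurations therefore equals $a+c$ in either case, independently of the top.

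I then evaluate $\overline{Z}^8_N$ by integrating out edge orientations in the order $A_{N-1}, A_{N-2}, \ldots, A_0$, and finally over the $2N$ free top-boundary edges. At step $i$, $\prod_{v \in L_{i/2}} w(v)$ depends only on the still-fixed tops (in $A_{i-1}$, or the top boundary for $i=0$) and on the bottoms in $A_i$; since the bottoms of distinct vertices of $L_{i/2}$ are disjoint, the sum over $A_i$ factors into $|L_{i/2}|$ independent sums, each equal to $a+c$ by the local observation. The contributions of layers already integrated out are, by induction, constants in the variables being summed, and pass through untouched. This produces $(a+c)^{N-i}$ at step $i$, so that $\prod_{i=0}^{N-1}(a+c)^{N-i} = (a+c)^{N(N+1)/2}$ overall, and the top-boundary sum supplies the factor $2^{2N}$.

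The main obstacle is the geometric bookkeeping: one must verify that the layers $L_{i/2}$ partition the internal vertices, that the bottom-edge sets $A_i$ really are disjoint within a layer, and that the lateral external edges appearing at each level are counted exactly once inside the corresponding $A_i$, so that summing over the top boundary together with the $A_i$ genuinely reproduces the sum over all edge orientations. Once this is settled the algebraic step is an immediate consequence of $a+c=b+d$.
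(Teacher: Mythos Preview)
Your argument is correct and is essentially the paper's own proof. Both proceed by peeling the triangle from the apex upwards: for a vertex whose two ``lower'' edges are external (free) and whose two ``upper'' edges are fixed, the sum over the lower-edge orientations under the parity constraint equals $a+c$ or $b+d$ depending on the upper edges, hence always $a+c$; the paper phrases this as $Z^8_G=(a+c)\,Z^8_{G\setminus v}$ vertex by vertex, while you integrate out whole layers $A_{N-1},\dots,A_0$ at once, but the mechanism and the resulting count $2^{2N}(a+c)^{N(N+1)/2}$ are identical.
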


\begin{proof}
First of all, for $N=1$, we have that
\begin{equation}
\overline{Z}^{8}_{1} = a + a + b + b + c + c + d + d = 2^2(a+c).
\end{equation}

Now, let define $L_N$ the subgraph of $\overline{K}_N$ that contains only the internal nodes $\{(i,0) : 0 \leq i \leq N-1\} \subset \overline{V}_N$ and their adjacent edges, numbered $\{(i,t): 0 \leq i \leq 2N-1, t \in \{0,1\}\}$. Then, the partition function of the 8-vertex model on $L_N$ with $\text{FBC}$ is
\begin{equation}
Z^{8}_{L_N} = 2^{2N}(a+c)^N.
\end{equation}
That can be proved by recurrence on $N$, indeed
\begin{align*}
Z^{8}_{L_{N+1}} & = \sum_{O \in \Omega^8_{L_{N+1}}} \prod_{i=0}^{N+1} w_{\text{type}_{O}(i,0)} \\
& = \left( \sum_{O \in \Omega^8_{L_N}} \prod_{i=0}^N w_{\text{type}_{O}(i,0)} \right) \left(\sum_{i=1}^8 w_i \right) \\
& \text{(it is the decomposition according to the orientations of edges adjacent to node $(N+1,0)$)} \\
& = (a+a+b+b+c+c+d+d) Z^8_{L_N} = 2^2 (a+c) Z^8_{L_N}.
\end{align*}
where, for any $N$, $\Omega^8_{L_N}$ is the set of 8-vertex model configurations of $L_N$ and $\text{type}_{O}(i,0)$ is the type of vertex $(i,0)$ in orientation $O$ of $L_N$.

The other point to observe if that for any graph $G$, finite subgraph of $\overline{K}_\infty$, and any vertex $v=(i,j)$ of $G$ such that edges $(2i,2j)$ and $(2i+1,2j)$ are internal edges and that edges $(2i,2j+1)$ and $(2i+1,2j+1)$ are external edges, then the partition function $Z^8_G$ of 8-vertex model on $G$ with FBC satisfies
\begin{equation}
Z^{8}_{G} = (a+c) Z^{8}_{\prive{G}{v}}
\end{equation}
if $a+c=b+d$, where $Z^{8}_{\prive{G}{v}}$ is the partition function of 8-vertex model on $\prive{G}{v}$ with FBC. Indeed, let us decompose $\Omega^8_G$ (resp. $\Omega^8_{\prive{G}{v}}$) the set of 8-vertex model configurations of $G$ (resp. $\prive{G}{v}$) into four subsets $\Omega_G^{8,(e_1,e_2)}$ (resp. $\Omega_{\prive{G}{v}}^{8,(e_1,e_2)}$) with $e_1,e_2 \in \{0,1\}$ where $\Omega_G^{8,(e_1,e_2)}$ (resp. $\Omega_{\prive{G}{v}}^{8,(e_1,e_2)}$) is the set of 8-vertex model configurations of $G$ (resp. $\prive{G}{v}$) such that $e(2i,2j) = e_1$ and $e(2i+1,2j)=e_2$. Then
\begin{align*}
Z^{8}_{G} & = \sum_{O \in \Omega^8_{G}} \prod_{v' \in V_{G}} w_{\text{type}_{O}(v')} \\
& = \sum_{O \in \Omega_{G}^{8,(0,0)}} \prod_{v' \in V_{G}} w_{\text{type}_{O}(v')} + \sum_{O \in \Omega_{G}^{8,(0,1)}} \prod_{v' \in V_{G}} w_{\text{type}_{O}(v')} \nonumber \\
& \qquad + \sum_{O \in \Omega_{G}^{8,(1,0)}} \prod_{v' \in V_{G}} w_{\text{type}_{O}(v')} + \sum_{O \in \Omega_{G}^{8,(1,1)}} \prod_{v' \in V_{G}} w_{\text{type}_{O}(v')} \\
& = \sum_{O \in \Omega_{\prive{G}{v}}^{8,(0,0)}} \prod_{v' \in V_{\prive{G}{v}}} w_{\text{type}_{O}(v')}\ (b+d) + \sum_{O \in \Omega_{\prive{G}{v}}^{8,(0,1)}} \prod_{v' \in V_{\prive{G}{v}}} w_{\text{type}_{O}(v')}\ (a+c) \nonumber \\
& \qquad + \sum_{O \in \Omega_{\prive{G}{v}}^{8,(1,0)}} \prod_{v' \in V_{\prive{G}{v}}} w_{\text{type}_{O}(v')}\ (a+c) + \sum_{O \in \Omega_{\prive{G}{v}}^{8,(1,1)}} \prod_{v' \in V_{\prive{G}{v}}} w_{\text{type}_{O}(v')}\ (b+d)\\
& \text{(by decomposition according the possible orientations of edges $(2i,2j+1)$ and $(2i+1,2j+1)$)}\\
& = (a+c) \sum_{O \in \Omega^8_{\prive{G}{v}}} \prod_{v \in V_{\prive{G}{v}}} w_{\text{type}_{O}(v')} = (a+c) Z^{8}_{\prive{G}{v}}
\end{align*}
\end{proof}

Now, we can prove Proposition~\ref{prop:8bc}.
\begin{proof}[Proof of Proposition~\ref{prop:8bc}]
Let $O \in \overline{\Omega}^8_N$. Then 
\begin{equation}
\overline{P}^{8}_N(O) = \frac{a^{n_1(O)+n_2(O)} b^{n_3(O)+n_4(O)} c^{n_5(O)+n_6(O)} d^{n_7(O)+n_8(O)}}{2^{2N}(a+c)^{N(N+1)/2}} 
\end{equation}
and, by~\eqref{eq:HPMBC},
\begin{equation}
\overline{P}^{8,1/2}_N(O) = \sum_{B \in \{0,1\}^{2N}} \frac{1}{2^{2N}} \frac{a^{n_1(O)+n_2(O)} b^{n_3(O)+n_4(O)} c^{n_5(O)+n_6(O)} d^{n_7(O)+n_8(O)}}{\overline{Z}^{8,B}_N} \ind{(e(i,0): 0 \leq i \leq 2N-1)=B}.
\end{equation}
But, for any $B \in \{0,1\}^{2N}$, $\overline{Z}^{8,B}_N = (a+c)^{N(N+1)/2}$ by arguments used to prove Lemma~\ref{lem:pf8} and the following remark: for $N=1$, 
\begin{align*}
\overline{Z}^{8,B}_{1} & = 
\begin{cases} 
(b+d) & \text{ if } B \in \{(0,0),(1,1)\} \\
(a+c) & \text{ if } B \in \{(0,1),(1,0)\}
\end{cases} \\
& = a+c.
\end{align*}
And, so
\begin{align}
\overline{P}^{8,1/2}_{N}(O) & = \frac{1}{2^{2N}} \frac{a^{n_1(O)+n_2(O)} b^{n_3(O)+n_4(O)} c^{n_5(O)+n_6(O)} d^{n_7(O)+n_8(O)}}{(a+c)^{N(N+1)/2}} \\
& = \overline{P}^{8}_N(O).
\end{align}
\end{proof}

Proposition~\ref{prop:limitlaw} is a direct consequence of Lemma~\ref{lem:pf8}.
\begin{proof}[Proof of Proposition~\ref{prop:limitlaw}]
For any $N$, let $S_N$ be the set of edges
\begin{align*}
S_N =  & \{ (i,0) : -2 \lfloor N/2 \rfloor \leq i \leq 2\lceil N/2 \rceil-1 \} \\
& \bigcup_{j=0}^{N-1} \left\{ (i,j+1)~|~ -2 \lfloor N/2 \rfloor +j \leq i \leq 2\lceil N/2 \rceil-1-j  \right\}.
\end{align*}

Let $O \sim \mathcal{L}(\PM{1/2};a/(a+c),b/(b+d))$ with $a+c=b+d$. Then, orientation $O_{|S_N}$ is distributed according to the following probability, by Definition~\ref{def:loiqpr},
\begin{align*}
\prob{O_{|S_N}} & = \frac{1}{2^{2N}} \left( \frac{a}{a+c} \right)^{n_1+n_2}  \left( \frac{b}{b+d} \right)^{n_3+n_4} \left( \frac{c}{a+c} \right)^{n_5+n_6} \left( \frac{d}{b+d} \right)^{n_7+n_8} \\
& =  \frac{a^{n_1+n_2} b^{n_3+n_4} c^{n_5+n_6} d^{n_7+n_8}}{2^{2N}(a+c)^{N(N+1)/2}}\\
& = \overline{P}^{8}_N(O_{|S_N}).
\end{align*}
\end{proof}

\subsection{Partition function on $K_N$}
\begin{lemma}
If $a+c=b+d$, then for any $N$, the partition function $Z^{8}_{N}$ of the 8-vertex model on $K_N$ with $\text{FBC}$ is
\begin{equation}
Z^{8}_{N} = 2^{2N} (a+c)^{N^2}
\end{equation}
\end{lemma}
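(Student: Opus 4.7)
The proof mirrors that of Lemma~\ref{lem:pf8}, transferring the peeling argument from $\overline{K}_N$ to the square lattice $K_N$. For the base case $N=1$, the graph $K_1$ has a single internal vertex whose four adjacent edges are all external; summing the local weights over the eight admissible configurations at this vertex yields
\[
Z^{8}_{1} = 2a+2b+2c+2d = 4(a+c) = 2^{2}(a+c)^{1},
\]
where the final equality uses $a+c=b+d$.

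The key ingredient, established inside the proof of Lemma~\ref{lem:pf8}, is a peeling identity: for any finite graph $G$ obtained from $K_N$ by iterated removal of corner vertices and any internal vertex $v$ of $G$ possessing exactly two external and two internal adjacent edges, one has, under $a+c=b+d$,
\[
Z^{8}_{G} = (a+c)\, Z^{8}_{G\setminus v}.
\]
The argument is word-for-word the one given for Lemma~\ref{lem:pf8}: one decomposes $\Omega^{8}_{G}$ according to the four joint orientations of the two internal edges at $v$; for each such choice, summing over the two free external-edge orientations and imposing the even-parity constraint at $v$ hits exactly two of the eight vertex types, whose weights add up to either $a+c$ or $b+d$, equal by hypothesis. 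The same identity can also be imported directly from Lemma~\ref{lem:pf8} via the embedding of (a rotation of) $K_N$ into $\overline{K}_\infty$ supplied by Proposition~\ref{prop:SKN}.

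To conclude, I apply the peeling iteratively. A corner internal vertex of $K_N$ has its two outward-facing edges external and its two inward-facing edges internal; peeling it leaves a graph exhibiting a new corner of the same kind, and a diagonal or row-by-row sweep exhausts all $N^{2}$ internal vertices of $K_N$, each contributing a factor $(a+c)$. Since $K_N$ has $2N^{2}+2N$ edges and each peeling removes exactly the two external edges of the peeled vertex (while converting the two internal edges into external edges of the reduced graph), after $N^{2}$ peelings we are left with a vertex-less graph carrying $2N^{2}+2N-2N^{2}=2N$ free edges, contributing $2^{2N}$. Therefore $Z^{8}_{N} = 2^{2N}(a+c)^{N^{2}}$. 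The only non-routine point is to verify that each peeling step truly exposes a new 2-external/2-internal corner, which is handled by a straightforward diagonal-sweep bookkeeping.
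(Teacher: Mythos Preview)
Your peeling strategy has a genuine gap: it is impossible to peel all $N^{2}$ internal vertices of $K_N$ so that each one has exactly two external and two internal edges at the moment it is removed. Here is a clean counting obstruction. Every edge of $K_N$ becomes external for exactly one peeling step (namely the step at which the last of its internal endpoints is removed), and is summed over at that step. If each of the $N^{2}$ vertices contributed exactly two external edges, the total number of edges summed over would be $2N^{2}$; but $K_N$ has $2N^{2}+2N$ edges, all of which must eventually be summed. Hence at least $2N$ ``extra'' external incidences must appear, and your picture of $2N$ free edges surviving in a vertex-less graph is illusory: once every internal vertex is gone there are no edges left at all. Concretely, already for $N=2$ the only way to start is at a corner, say $(0,0)$; the only remaining $2$-external/$2$-internal vertex is then the opposite corner $(1,1)$, and after peeling it both $(1,0)$ and $(0,1)$ have four external edges.

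The paper's proof supplies exactly the missing ingredient: an additional peeling identity for a vertex with \emph{one} internal and \emph{three} external edges, namely $Z^{8}_{G}=2(a+c)\,Z^{8}_{G\setminus v}$. With this rule in hand a row-by-row sweep works: in each of the first $N-1$ rows the first $N-1$ vertices are $2$-external/$2$-internal (factor $a+c$) and the last is $3$-external/$1$-internal (factor $2(a+c)$); in the final row the first $N-1$ vertices are $3$-external/$1$-internal and the last is $4$-external/$0$-internal (factor $4(a+c)$). The product is $(a+c)^{N^{2}}\cdot 2^{2(N-1)+2}=2^{2N}(a+c)^{N^{2}}$. So your overall plan is right, but you need the $3$-external case; the ``straightforward diagonal-sweep bookkeeping'' you defer is precisely where the argument breaks.
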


\begin{proof}
Notations used here are the same as in proof of Lemma~\ref{lem:pf8}. Moreover, many arguments are the same as Lemma~\ref{lem:pf8}. The only new argument is the following one: for any graph $G$, finite subgraph of $\overline{K}_\infty$, and any vertex $v=(i,j)$ of $G$ such that edge $(2i,2j)$ (resp. $(2i+1,2j)$) is internal and edges $(2i+1,2j)$ (resp. $(2i,2j)$), $(2i,2j+1)$ and $(2i+1,2j+1)$ are external edges, then the partition function $Z^8_G$ of 8-vertex model on $G$ with FBC satisfies
\begin{equation}
Z^{8}_{G} = 2 (a+c) Z^{8}_{\prive{G}{v}}.
\end{equation}
We treat the case where $(2i,2j)$ is the internal edge of $v$. Let us decompose the set $\Omega^8_{\prive{G}{v}}$ into two subsets $\Omega^{8,0}_{\prive{G}{v}}$ and $\Omega^{8,1}_{\prive{G}{v}}$ such that, for any $k \in \{0,1\}$, $\Omega^{8,k}_{\prive{G}{v}}$ is the set of 8-vertex model configurations of $\prive{G}{v}$ such that $e(2i,2j) = k$. Then, decomposing according to possible orientations of $v$, 
\begin{align*}
Z^{8}_{G} & = a \sum_{O \in \Omega_{\prive{G}{v}}^{8,0}} W(O) + b \sum_{O \in \Omega_{\prive{G}{v}}^{8,0}} W(O) + c \sum_{O \in \Omega_{\prive{G}{v}}^{8,0}} W(O) + d \sum_{O \in \Omega_{\prive{G}{v}}^{8,0}} W(O) \nonumber \\
& \qquad + a \sum_{O \in \Omega_{\prive{G}{v}}^{8,1}} W(O) + b \sum_{O \in \Omega_{\prive{G}{v}}^{8,1}} \prod_{v' \in V_{G}} W(O) + c \sum_{O \in \Omega_{\prive{G}{v}}^{8,1}} W(O) + d \sum_{O \in \Omega_{\prive{G}{v}}^{8,1}} \prod_{v' \in V_{G}} W(O) \\
& = (a+b+c+d) \left( \sum_{O \in \Omega_{G}^{8,0}} W(O) + \sum_{O \in \Omega_{G}^{8,1}} W(O) \right) \\
& = 2 (a+c) Z^{8}_{\prive{G}{v}}.
\end{align*}
\end{proof}

Now, we present a useful property of $\overline{P}^8_{\infty} = \mathcal{L} \left(\PM{1/2};p,r \right)$.
\begin{lemma} \label{lem:iid}
Let $O \sim \overline{P}^8_{\infty}$. Let $(t_i: i \in \ZZ) \in \NN^\ZZ$ such that $t_{i+1}-t_i \in \{0,(-1)^{i+1+t_i}\}$. 
Then $(e(i,t_i) : i \in \NN) \sim \PM{1/2}$.
\end{lemma}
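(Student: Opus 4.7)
The plan is to prove by induction on $N$ that the finite restriction $(e(i,t_i) : 0 \leq i \leq N)$ is distributed according to $\PM{1/2}$. The base case $N=0$ is simply the marginal uniformity of $e(0,t_0)$, which is given by Proposition~\ref{cor:inv2} applied at time $t_0$.

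Before doing the inductive step, I would reinterpret the constraint $t_{i+1}-t_i \in \{0,(-1)^{i+1+t_i}\}$ geometrically. Splitting by the parity of $i+t_i$, the constraint reads: if $i+t_i$ is even then $t_{i+1} \in \{t_i, t_i-1\}$, and if $i+t_i$ is odd then $t_{i+1} \in \{t_i, t_i+1\}$. A case-by-case inspection of the four possibilities (two parities of $t_i$ times two options for $t_{i+1}-t_i$) then shows the crucial local fact: two consecutive edges of the zig-zag either lie in the same transition pair of $T_0$ or $T_1$ (this happens precisely when $i+t_i$ is even and $t_{i+1}=t_i$, e.g.\ both in the $T_0$-input pair $(2j,2s),(2j+1,2s)$), or they lie in two different transition pairs whose causal pasts, traced backward through the PCA of Definition~\ref{def:loiqpr}, never share a common input pair at any time slice. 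Informally, the zig-zag never cuts through a single transition pair.

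For the inductive step I would expose $e(N+1,t_{N+1})$ last. In the first situation above, the pair $(e(N,t_N),e(N+1,t_{N+1}))$ coincides with a $T_0$ or $T_1$ input/output pair, and the invariance of $\PM{1/2}$ under the pair operators (this is exactly the computation behind Proposition~\ref{cor:inv2}) gives that this pair is uniform on $\{0,1\}^2$; in particular $e(N+1,t_{N+1})$ is Bernoulli$(1/2)$ and independent of $e(N,t_N)$. In the second situation, disjointness of the transition pairs together with the independence of the local $T_0,T_1$ updates (again Definition~\ref{def:loiqpr}) yield the same independence directly from Proposition~\ref{cor:inv2} at the relevant time slice.

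The main obstacle is not the step from index $N$ to $N+1$ but rather propagating this past all previously exposed edges $e(i,t_i)$, $i<N$, since the causal past of $e(N+1,t_{N+1})$ may extend across many initial positions. I would handle this by a parallel induction verifying that, step by step along the zig-zag, the transition pair containing $(N+1,t_{N+1})$ is disjoint from the input pair of every earlier edge on the path; the parity-based case analysis above is precisely what guarantees that each time the path changes time slice the pair positions shift in a coherent direction, so disjointness is preserved. Combined with Proposition~\ref{cor:inv2} applied slice-by-slice, this closes the induction and yields the claim.
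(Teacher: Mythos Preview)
Your induction outline has a genuine gap. The claim that two consecutive zig-zag edges lying in different transition pairs have causal pasts that ``never share a common input pair at any time slice'' is false. Take for instance $t_0=t_1=2$ (allowed since $0+2$ is even): $e(0,2)$ and $e(1,2)$ lie in the $T_1$-output pairs $\{-1,0\}$ and $\{1,2\}$ respectively, which are indeed disjoint at time~$1$; but tracing back one more step through $T_0$, the pair $\{-1,0\}$ at time~$1$ depends on the time-$0$ input pairs $\{-2,-1\}$ and $\{0,1\}$, while the pair $\{1,2\}$ at time~$1$ depends on $\{0,1\}$ and $\{2,3\}$. Both causal pasts therefore contain the input pair $\{0,1\}$ at time~$0$. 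In general the backward light cones of distinct zig-zag edges overlap heavily, and slice-by-slice applications of Proposition~\ref{cor:inv2} cannot by themselves convert that overlap into a proof of independence; the ``parallel induction'' you sketch would have to exhibit independence despite shared inputs, which it does not.

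The paper's proof sidesteps this obstruction entirely by passing to the particle system $\mathcal{P}(\mu;p,r)$ of Definition~\ref{def:part} and Proposition~\ref{prop:part}. The coupling of Lemma~\ref{lem:couplage} gives $e(i,t)=s(\alpha(i,t),t)$, where $\alpha(i,t)$ is the particle sitting at position $i$ at time $t$, and shows that each particle's state process depends only on its \emph{own} initial state together with auxiliary randomness independent across particles. Independence of $(e(i,t_i):i\in\ZZ)$ then reduces to the purely combinatorial claim that the particles occupying the zig-zag positions $(i,t_i)$ are pairwise distinct, which is exactly what the parity constraint on $(t_i)$ guarantees (the reachable set of $\alpha(i,t_i)$ meets the zig-zag only at $(i,t_i)$, cf.\ Figure~\ref{fig:mouv}). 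The overlap of causal light cones is then immaterial, because the dependency is carried by particle identity rather than by the full cone of inputs.
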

Its proof is done in Section~\ref{sec:vitesse}. Indeed, the lemma is a direct consequence of Proposition~\ref{prop:part}, presented and proved in Section~\ref{sec:vitesse}.

Now, with these two lemmas, we can prove Proposition~\ref{prop:SKN}.
\begin{proof}[Proof of Proposition~\ref{prop:SKN}]
Let $O \sim \overline{P}^8_{\infty}$ and let $S_N = (V_N,E_N)$. Then, there exists $(i,t) \in \ZZ \times \NN$, $i+t$ even, such that $E_N = \{(i',t') : i+t \leq i'+ t' \leq i+t+2N, i-t - 2N +1 \leq i'-t' \leq i+1-t \}$ (see Figure~\ref{fig:SKN}). In particular, $(t_{i+j}= t+j: -N+1\leq j \leq 0) \cup (t_{i+j} = t - j +1 : 1 \leq j \leq N)$ satisfies condition of Lemma~\ref{lem:iid} and, so, $(e(i+j,t_{i+j}) : -N+1 \leq j \leq N)$ are i.i.d.\ and of law $\mathcal{B}(1/2)$. Hence,
\begin{align*}
\prob{O_{|S_N}} & = \frac{1}{2^{2N}} \left(\frac{a}{a+c}\right)^{n_1+n_2} \left(\frac{b}{b+d}\right)^{n_3+n_4} \left( \frac{c}{a+c}\right)^{n_5+n_6} \left(\frac{d}{b+d}\right)^{n_7+n_8} \\
& = \frac{a^{n_1+n_2} b^{n_3+n_4} c^{n_5+n_6} d^{n_7+n_8}}{Z^8_N}
\end{align*}
\end{proof}

\section{Exact computation of the edge correlation function} \label{sec:Cit}
To prove Theorem~\ref{thm:Cit}, we need to prove first the following proposition.
\begin{proposition} \label{prop:C8V}
The edge correlation function $C_8(i,t)$ (of 8-vertex model with FBC and $a+c=b+d$) is the coefficient of $l^t x^{i+t}$ in the formal series of the following rational fraction:
\begin{equation}
\frac{1 + l (1 - (p+r) +x (r-p))}{x^{2} l^{2} (2p-1) (2r-1) + l (1-(p+r)) (1+x^2) + 1} \label{eq:Cit}
\end{equation}
with $\displaystyle p = \frac{a}{a+c}$ and $\displaystyle r = \frac{b}{b+d}$.
\end{proposition}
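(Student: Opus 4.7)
The plan is to derive a linear recurrence for $C_8(i,t)$ from the Markov dynamics of $\mathcal{L}(\PM{1/2};p,r)$ and encode it as the claimed rational generating function. Normalisation comes first: by Proposition~\ref{cor:inv2} every edge marginal is $\bern{1/2}$, so $\var{e(i,t)} = 1/4$ and $C_8(i,t) = \esp{}{\epsilon(0,0)\epsilon(i,t)}$ with $\epsilon(i,t) := 2e(i,t) - 1$.

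The core step is a one-step linearisation, which I expect to be the content of Lemma~\ref{lem:couplage}. Write $\sigma_t(i) := i + (-1)^{i+t}$ for the partner of $i$ in the pairing used by $T_0$ (for $t$ even) or $T_1$ (for $t$ odd). A direct case check on Definition~\ref{def:loiqpr} gives
\begin{displaymath}
\esp{}{\epsilon(i,t+1)\,\big|\,\mathcal{F}_t} = (2r-1)\,\epsilon(i,t)\,\ind{\text{same}} + (1-2p)\,\epsilon(i,t)\,\ind{\text{diff}},
\end{displaymath}
where \emph{same}/\emph{diff} refer to the equality/inequality of $e(i,t)$ and $e(\sigma_t(i),t)$. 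Using $\ind{\text{same}} - \ind{\text{diff}} = \epsilon(i,t)\epsilon(\sigma_t(i),t)$ together with the identities $2r-1 = D-\Delta$ and $1-2p = D+\Delta$ (which is where the hypothesis $a+c=b+d$ enters), this collapses into
\begin{displaymath}
\esp{}{\epsilon(i,t+1)\,\big|\,\mathcal{F}_t} = D\,\epsilon(i,t) - \Delta\,\epsilon(\sigma_t(i),t).
\end{displaymath}
Multiplying by $\epsilon(0,0)$ and taking expectations yields $C_8(i,t+1) = D\,C_8(i,t) - \Delta\,C_8(\sigma_t(i),t)$ with $C_8(i,0) = \ind{i=0}$.

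I would then set $f_t(x) := \sum_i C_8(i,t)\,x^{i+t}$ and split $f_t = f_t^e + f_t^o$ according to the parity of $i+t$; the recursion becomes the $2\times 2$ system
\begin{align*}
f_{t+1}^e(x) &= -\Delta x^2\, f_t^e(x) + Dx\, f_t^o(x),\\
f_{t+1}^o(x) &= Dx\, f_t^e(x) - \Delta\, f_t^o(x),
\end{align*}
with $f_0^e = 1$, $f_0^o = 0$. Passing to $G^{e/o}(l,x) := \sum_{t\geq 0} l^t f_t^{e/o}(x)$ yields $(1+l\Delta x^2)G^e - lDx\,G^o = 1$ and $-lDx\,G^e + (1+l\Delta)G^o = 0$. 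Cramer's rule together with the elementary identity $\Delta^2 - D^2 = P$ (immediate from the definitions of $\Delta$, $D$, $P$) gives $G^e + G^o$ equal to the rational fraction of the statement, whose coefficient of $l^t x^{i+t}$ is $C_8(i,t)$ by construction.

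The main obstacle is the linearisation step: everything else is routine linear algebra, but the fact that the conditional expectation of a single spin is an affine combination of just two previous spins is exactly what the constraint $a+c=b+d$ buys, and it is what reduces the apparently pair-valued dynamics to the scalar random-walk structure on $\ZZ$ that makes the generating function rational.
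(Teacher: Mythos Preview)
Your argument is correct. The linearisation
\[
\esp{}{\epsilon(i,t+1)\mid\mathcal{F}_t}=D\,\epsilon(i,t)-\Delta\,\epsilon(\sigma_t(i),t)
\]
follows exactly as you say (using $\epsilon^2\equiv 1$), the parity-split recursion for $f_t^{e/o}$ is right, and the $2\times 2$ Cramer computation together with $\Delta^2-D^2=P$ lands on the stated rational fraction. One small remark: the identities $2r-1=D-\Delta$ and $1-2p=D+\Delta$ are purely definitional and hold for any $p,r$; the hypothesis $a+c=b+d$ enters upstream, in identifying $\overline{P}^8_\infty$ with $\mathcal{L}(\PM{1/2};p,r)$ (Propositions~\ref{prop:8bc}--\ref{prop:SKN}) and in Proposition~\ref{cor:inv2} which you already invoked to normalise variances.

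Your route differs from the paper's. The paper does not compute a conditional expectation of $\epsilon(i,t+1)$; instead Lemma~\ref{lem:couplage} is a \emph{coupling} statement showing that the influence of a single edge orientation propagates as a random walk $X_t$ on $\ZZ\times\{0,1\}$ (Definition~\ref{def:XMA}), so that $C_8(i,t)=\prob{X_t=(i,1)}-\prob{X_t=(i,0)}$. The transition probabilities of $X_t$ are then encoded by a combinatorial class of two-coloured lattice paths, split into four subsets $E_{k_1,k_2}$, and the resulting $4\times 4$ linear system for the $F_{k_1,k_2}$ is solved and evaluated to obtain~\eqref{eq:Cit}. Your spin-variable linearisation collapses the $\{0,1\}$ colour coordinate from the outset, which is why you get a $2\times 2$ system instead; it is shorter and more transparent for the purpose of Proposition~\ref{prop:C8V}. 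What the paper's approach buys is that the random walk $X_t$ and the associated particle system are reused later: they drive the asymptotic analysis in Section~\ref{sec:asympt0} (via Proposition~\ref{prop:X2t}) and the boundary-condition estimate of Proposition~\ref{prop:vitesse} (via the particle system of Section~\ref{sec:vitesse}).
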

After proving this proposition, we will extract coefficients of~\eqref{eq:Cit} to prove Theorem~\ref{thm:Cit}.

\subsection{Proof of Proposition~\ref{prop:C8V}} \label{sec:P8V}
Let $O \sim \overline{P}^8_\infty$. We recall that, in Definition~\ref{def:loiqpr}, there are two operators $T_0$ and $T_1$ that give orientations line by line according to parity of time.\par

In the following, we suppose that $r \leq 1-p$. Case $1-p \leq r$ can be treated in a similar way with some differences that are commented in Remark~\ref{rem:ordre}. 
First, let us compute $C_8(i,t)$:
\begin{align}
C_8(i,t) & = \frac{\esp{}{e(0,0)e(i,t)} - \esp{}{e(0,0)} \esp{}{e(i,t)}}{\sqrt{\var{e(i,t)}} \sqrt{\var{e(0,0)}}} = 4 \left(\prob{e(i,t) = 1 \text{ and } e(0,0) = 1} - \frac{1}{4}\right) \nonumber \\
& = 2 \prob{e(i,t) = 1~|~e(0,0) = 1} - 1 \nonumber \\
& = \prob{e(i,t) = 1~|~e(0,0) = 1} - \prob{e(i,t) = 0~|~e(0,0) = 1} \label{eq:COT}
\end{align}
Hence, we need to compute
\begin{displaymath}
\prob{e(i,t) = 1~|~e(0,0) = 1} \text{ or } \prob{e(i,t) = 0~|~e(0,0) = 1}.
\end{displaymath}
This is done using Definition~\ref{def:loiqpr} and the following crucial lemma. In few words, this lemma tells us that after a transition $T_0$ or $T_1$, orientation $e(i,t)$ of edge $(i,t)$ influences orientation $e(j,t+1)$ of a unique random edge on the line $((j,t+1) : j \in \ZZ)$.
\begin{lemma} \label{lem:couplage}
Let $i \in \ZZ$. Let $(s(j) : j \in \ZZ)$ be a sequence of random variables whose values are in $\{0,1\}$ such that $s(i)$ is independent of $(s(j) : j \neq i)$. For any $u \in \{0,1\}$, we denote $s_u = T_u(s)$, then
\begin{itemize}[topsep=0pt,itemsep=0pt,partopsep=0pt,parsep=0pt]
\item with probability $r$, $s_u(i) = s(i)$ and $(s_u(j):j \neq i)$ are independent of $s(i)$;
\item with probability $1-p-r$, $s_u(i+(-1)^{i+u}) = 1-s(i)$ and $(s_u(j):j \neq i+(-1)^{i+u})$ are independent of $s(i)$;
\item with probability $p$, $s_u(i) = 1-s(i)$ and $(s_u(j):j \neq i)$ are independent of $s(i)$.
\end{itemize} 
\end{lemma}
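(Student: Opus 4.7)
The plan is to construct an explicit coupling of the transition $T_u$ that exposes the trichotomy stated in the lemma. First, by checking the four parity combinations of $i \in \ZZ$ and $u \in \{0,1\}$ against the pairing prescription of Definition~\ref{def:loiqpr} ($T_0$ pairs $(2k,2k+1)$; $T_1$ pairs $(2k-1,2k)$), one finds that the partner of index $i$ under $T_u$ is exactly $j = i + (-1)^{i+u}$. Every other pair of $T_u$ involves indices different from $i$, so its transition uses independent randomness and input values $s(k), s(k')$ with $k, k' \neq i$; it is therefore automatically independent of $s(i)$. This reduces the whole question to what happens on the single pair $\{i,j\}$.

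Next, I would realize the transition on this pair by introducing an auxiliary variable $\chi \in \{A,B,C\}$, independent of $(s(k): k \in \ZZ)$, with $\prob{\chi = A} = r$, $\prob{\chi = B} = 1-p-r$, and $\prob{\chi = C} = p$ (well-defined in the standing regime $p+r \le 1$). The coupling is: in case $A$ the pair is left unchanged; in case $B$ it is \emph{flipped-and-swapped}, i.e.\ $s_u(i) = 1 - s(j)$ and $s_u(j) = 1 - s(i)$; in case $C$ both coordinates are simply flipped, giving $s_u(i) = 1 - s(i)$ and $s_u(j) = 1 - s(j)$. Direct verification on the two input types of Definition~\ref{def:loiqpr} checks that this coupling yields the correct transition: an input $(k,k)$ is sent to $(k,k)$ with probability $r$ (case $A$) and to $(1-k, 1-k)$ with probability $(1-p-r) + p = 1-r$ (cases $B$, $C$); an input $(k, 1-k)$ is preserved with probability $r + (1-p-r) = 1-p$ (cases $A$, $B$) and sent to $(1-k, k)$ with probability $p$ (case $C$).

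Finally, I would read off the three bullets of the lemma directly from the three branches of $\chi$. In case $A$ (probability $r$), $s_u(i) = s(i)$; moreover $s_u(j) = s(j)$, which is independent of $s(i)$ by hypothesis. In case $B$ (probability $1-p-r$), $s_u(j) = 1-s(i)$, and $s_u(i) = 1 - s(j)$ depends only on $s(j)$, hence is independent of $s(i)$. In case $C$ (probability $p$), $s_u(i) = 1-s(i)$, and $s_u(j) = 1-s(j)$ is independent of $s(i)$. Combined with the paragraph-one observation that all $s_u(m)$ for $m \notin \{i,j\}$ are already independent of $s(i)$, this is exactly the trichotomy asserted. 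The construction is elementary; the only real piece of bookkeeping is the parity case analysis that fixes the partner index as $j = i + (-1)^{i+u}$, and the only conceptual input is the recognition that the three transitions $A$, $B$, $C$ together provide a pathwise decomposition of $T_u$ in which $s(i)$ influences at most one output coordinate per realization.
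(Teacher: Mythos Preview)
Your proof is correct and follows essentially the same coupling argument as the paper: the three-branch auxiliary variable $\chi$ with probabilities $r$, $1-p-r$, $p$ and the rules ``keep / flip-and-swap / flip both'' reproduce exactly the coupling displayed in the paper's Table~\ref{tab:RecC}, and your identification of the partner index $j=i+(-1)^{i+u}$ is the general form of the case $u=0$, $i$ even treated there. Your exposition is slightly more explicit in naming $\chi$ and in checking the parity bookkeeping for all four $(i,u)$ cases, but the substance is identical.
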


\begin{proof}
We establish the proof for $u=0$ and $i$ even, other cases are proved in similar ways.\par
By definition of $T_0$,
\begin{itemize}[topsep=0pt,itemsep=0pt,partopsep=0pt,parsep=0pt]
\item if $s(i+1) = 1- s(i)$, then
\begin{itemize}[topsep=0pt,itemsep=0pt,partopsep=0pt,parsep=0pt]
\item with probability $1-p$, $s_0(i) = s(i) = 1 - s(i+1)$ and $s_0(i+1)=s(i+1)=1-s(i)$, and
\item with probability $p$, $s_0(i) = 1-s(i)$ and $s_0(i+1) = 1-s(i+1)$; but
\end{itemize}
\item if $s(i+1) = s(i)$, then
\begin{itemize}[topsep=0pt,itemsep=0pt,partopsep=0pt,parsep=0pt]
\item with probability $r$, $s_0(i) = s(i)$ and $s_0(i+1)=s(i+1)$, and
\item with probability $1-r$, $s_0(i) = 1-s(i) = 1- s(i+1)$ and $s_0(i+1) = 1-s(i+1)=1-s(i)$.
\end{itemize}
\end{itemize}
\smallskip
Now, observe, from the fact that $r \leq 1-p$ and, by a coupling argument, that
\begin{itemize}[topsep=0pt,itemsep=0pt,partopsep=0pt,parsep=0pt]
\item with probability $r$, $s_0(i)=s(i)$ and $s_0(i)$ depends only on $s(i)$ and not on $s(i+1)$;
\item with probability $1-p-r$, $s_0(i+1)=1-s(i)$ and $s_0(i+1)$ depends only on $s(i)$ and not on $s(i+1)$;
\item with probability $p$, $s_0(i)=1-s(i)$ and $s_0(i)$ depends only on $s(i)$ and not on $s(i+1)$.
\end{itemize}
Table~\ref{tab:RecC} shows this coupling argument.

\begin{table}[h!] 
\begin{center}
\begin{tabular}{|l|c|c||c|c|}
\hline
probability & $s_0(i)$ & $s_0(i+1)$ & \raisebox{-6mm}{\includegraphics{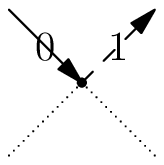}} & \raisebox{-6mm}{\includegraphics{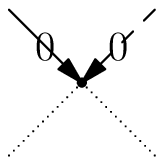}} \\
\hline
$r$ & $s(i)$ & $s(i+1)$ & \raisebox{-6mm}{\includegraphics{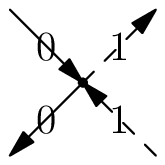}} & \raisebox{-6mm}{\includegraphics{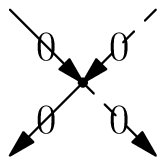}} \\
\hline
$1-p-r$ & $1-s(i+1)$ & $1-s(i)$ & \raisebox{-6mm}{\includegraphics{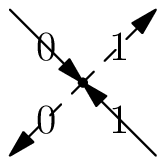}} & \raisebox{-6mm}{\includegraphics{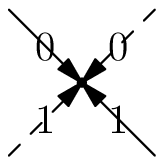}} \\
\hline
$p$ & $1-s(i)$ & $1-s(i+1)$ & \raisebox{-6mm}{\includegraphics{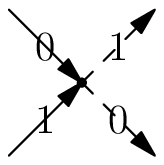}} & \raisebox{-6mm}{\includegraphics{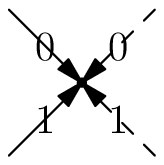}} \\
\hline
\end{tabular}
\end{center}
\caption{Coupling in the proof of Lemma~\ref{lem:couplage}. Figures on the fourth and fifth columns represents the coupling when $s(i)=1-s(i+1)=0$ in the fourth and when $s(i)=s(i+1)=0$ in the fifth. 
}
\label{tab:RecC}
\end{table}

Finally, by definition of $T_0$, $(s_0(j) : j \neq i,i+1)$ depend only on $(s(j): j \neq i,i+1)$ and not on $s(i)$. That is ending the proof.
\end{proof}

Lemma~\ref{lem:couplage} is crucial and central. Indeed, it permits itself to understand exact influences of orientation $e(0,0)$ on an orientation $O \sim \overline{P}^8_\infty$. This influence could be rewritten in term of a non-homogeneous random walk.
\begin{definition} \label{def:XMA}
Let $p,r \in [0,1]$, $i_0 \in \ZZ$ and $k_0 \in \{0,1\}$. We denote by $(X_t: t \geq 0)$ the following stochastic process with value on $\ZZ \times \{0,1\}$:
\begin{itemize}[topsep=0pt,itemsep=0pt,partopsep=0pt,parsep=0pt]
\item $X_0 = (i_0,k_0)$ a.s.;
\item if $X_t = (i,k)$, then 
\begin{equation} \label{eq:XMA}
X_{t+1} = \begin{cases}
(i,k) & \text{w.p. } r, \\
(i+(-1)^{i+t},1-k) & \text{w.p. } 1-p-r, \\
(i,1-k) & \text{w.p. } p.
\end{cases}
\end{equation}
\end{itemize}
This stochastic process is a Markov chain, but not a homogeneous Markov chain because its transitions depend on time's parity.
\end{definition}

\begin{lemma}
\begin{equation}
C_8(i,t) = \prob{X_t = (i,1) | X_0 = (0,1)} - \prob{X_t=(i,0) | X_0 = (0,1)}. \label{eq:COT2}
\end{equation}
\end{lemma}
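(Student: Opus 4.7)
The plan is to prove \eqref{eq:COT2} by iterating Lemma~\ref{lem:couplage} so as to track how the information carried by $e(0,0)$ propagates upward through the lattice. Starting from \eqref{eq:COT}, it is enough to show that the two conditional probabilities $\prob{e(i,t) = \varepsilon \mid e(0,0) = 1}$ (for $\varepsilon \in \{0,1\}$) match, up to a symmetric noise term that cancels, the corresponding probabilities for the chain $(X_t)$ of Definition~\ref{def:XMA}.

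Concretely, I proceed by induction on $t$, constructing a random pair $X_t = (J_t, K_t) \in \ZZ \times \{0,1\}$, measurable with respect to the coin flips driving the operators $T_0$ and $T_1$ but independent of $e(0,0)$, and satisfying: (a) $e(J_t,t) = e(0,0)$ on the event $\{K_t = 1\}$ and $e(J_t,t) = 1 - e(0,0)$ on the event $\{K_t = 0\}$, almost surely; (b) conditionally on $X_t = (i,k)$, the orientations $(e(j,t) : j \neq i)$ are independent of $e(0,0)$; (c) $(X_t)$ has the law of the Markov chain of Definition~\ref{def:XMA} started from $(0,1)$. The base case $t=0$ is immediate with $X_0 = (0,1)$ since under $\PM{1/2}$ all $(e(j,0) : j \neq 0)$ are already independent of $e(0,0)$.

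For the induction step, I apply Lemma~\ref{lem:couplage} to the line $s = (e(j,t) : j \in \ZZ)$ at position $i = J_t$ with $u = t \bmod 2$; the hypothesis that $s(J_t)$ is independent of the rest of the line is exactly property (b). A direct case analysis then shows that the three outcomes of Lemma~\ref{lem:couplage} (of respective probabilities $r$, $1-p-r$, $p$) correspond term by term to the three transitions of \eqref{eq:XMA}: the "stay'' outcome preserves $K_t$, while the two "flip'' outcomes send $K_t$ to $1-K_t$ and, in the middle case, move $J_t$ to $J_t + (-1)^{J_t + u} = J_t + (-1)^{J_t + t}$. Property (b) propagates because the new "rest of the line'' is built from coin flips independent of $e(0,0)$, so (a)--(c) hold at time $t+1$.

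Finally, to derive \eqref{eq:COT2}, I decompose $\prob{e(i,t) = \varepsilon \mid e(0,0) = 1}$ over the three events $\{X_t = (i,1)\}$, $\{X_t = (i,0)\}$, and $\{J_t \neq i\}$. On the first two, (a) determines $e(i,t)$ given $e(0,0) = 1$; on the third, (b) combined with Proposition~\ref{cor:inv2} shows that $e(i,t)$ is $\bern{1/2}$-distributed and independent of $e(0,0)$, so its contribution of $1/2$ appears equally in $\prob{e(i,t)=1\mid e(0,0)=1}$ and $\prob{e(i,t)=0\mid e(0,0)=1}$ and cancels in the difference of \eqref{eq:COT}. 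The main obstacle is making the coupling sufficiently explicit so that $X_t$ is a function of fresh randomness alone, independent of the initial data; this is achieved via the coupling displayed in Table~\ref{tab:RecC}, which uses the standing assumption $r \leq 1-p$ to keep $1-p-r \geq 0$. In the complementary regime $1-p < r$, one substitutes the symmetric coupling mentioned in Remark~\ref{rem:ordre}; the argument is otherwise identical.
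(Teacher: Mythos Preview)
Your proof is correct and follows essentially the same route as the paper: both argue by induction on $t$, using Lemma~\ref{lem:couplage} as the engine that propagates the influence of $e(0,0)$ one line at a time, and both rely on the $\PM{1/2}$ invariance so that the ``noise'' contribution from $\{J_t\neq i\}$ cancels in the difference~\eqref{eq:COT}. The only distinction is one of presentation: you build the pair $X_t=(J_t,K_t)$ explicitly on the same probability space and carry along the invariants (a)--(c), whereas the paper works directly with the identity~\eqref{eq:CXX} at the level of differences of conditional probabilities and invokes Lemma~\ref{lem:couplage} to obtain the same recursion as in~\eqref{eq:XMA}; the two arguments are interchangeable.
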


\begin{proof}
By~\eqref{eq:COT}, it is sufficient to prove that if $O \sim \overline{P}^8_\infty$, then for, any $i \in \ZZ$, any $t \in \NN$,
\begin{align}
& \prob{e(i,t) = 1~|~e(0,0) = 1} -\prob{e(i,t) = 0~|~e(0,0) = 1} \nonumber \\
& = \prob{X_t = (i,1)~|~X_0 = (0,1)} - \prob{X_t = (i,0)~|~X_0 = (0,1)}. \label{eq:CXX}
\end{align}

This is done by induction on $t$. When $t=0$, we have that, for any $i$, for any $k$,
\begin{align*}
& \prob{e(i,0) = k~|~e(0,0) = 1} -\prob{e(i,0) = 0~|~e(0,0) = 1} \\
& = \begin{cases} 
1-0  & \text{if } i=0\\
1/2-1/2 & \text{else}
\end{cases}\\
& = \prob{X_0 = (i,1) | X_0 = (0,1)}-\prob{X_0 = (i,0) | X_0 = (0,1)} .
\end{align*}
Now, let $t \in \NN$ and suppose that~\eqref{eq:CXX} holds for any $i$, then
\begin{align*}
& \prob{e(i,t+1) = 1~|~e(0,0) = 1} -\prob{e(i,t+1) = 0~|~e(0,0) = 1} \nonumber \\
& = r\, \prob{e(i,t) = 1~|~e(0,0) = 1} - r\, \prob{e(i,t) = 0~|~e(0,0) = 1} \\
& \quad + (1-p-r) \prob{e(i+(-1)^{i+t},t) = 0~|~e(0,0) = 1} - (1-p-r) \prob{e(i+(-1)^{i+t},t) = 1~|~e(0,0) = 1} \\
& \quad + p\, \prob{e(i,t) = 0~|~e(0,0) = 1} - p\, \prob{e(i,t) = 1~|~e(0,0) = 1} \text{ (by Lemma~\ref{lem:couplage})}\\
& = r (\prob{X_t = (i,1) | X_0 = (0,1)}-\prob{X_{t+1} = (i,0) | X_0 = (0,1)})\\
& \quad + (1-p-r) (\prob{X_t = (i+(-1)^{i+t},0) | X_0 = (0,1)}-\prob{X_{t+1} = (i+(-1)^{i+t},1) | X_0 = (0,1)})\\
& \quad + p (\prob{X_t = (i,0) | X_0 = (0,1)}-\prob{X_{t+1} = (i,1) | X_0 = (0,1)})\\
& = \prob{X_{t+1} = (i,1) | X_0 = (0,1)}-\prob{X_{t+1} = (i,0) | X_0 = (0,1)} \text{ (by Definition~\eqref{eq:XMA})}
\end{align*}
\end{proof}

Hence, computations of probabilities that $X_t$ is in a certain state is equivalent to compute $C_8$. To do that, we use generating functions and methods of analytic combinatorics. For references to these methods, we recommend the book of Flajolet-Sedgewick~\cite{FS09}.\par
Let $\tilde{E}$ be the set of paths that start from $(0,0)$ and go to any point $(i,t) \in \ZZ \times \NN$ using only three steps $(-1,1)$, $(0,1)$ or $(1,1)$ and such that, if the path is in a node $(i',t')$, then the next allowed steps are $((-1)^{i'+t'},1)$ and $(0,1)$. In other words, $\tilde{E}$ is the set of paths of the graph represented on Figure~\ref{fig:path} with starting point $(0,0)$.
\begin{figure}
\begin{center}
\includegraphics{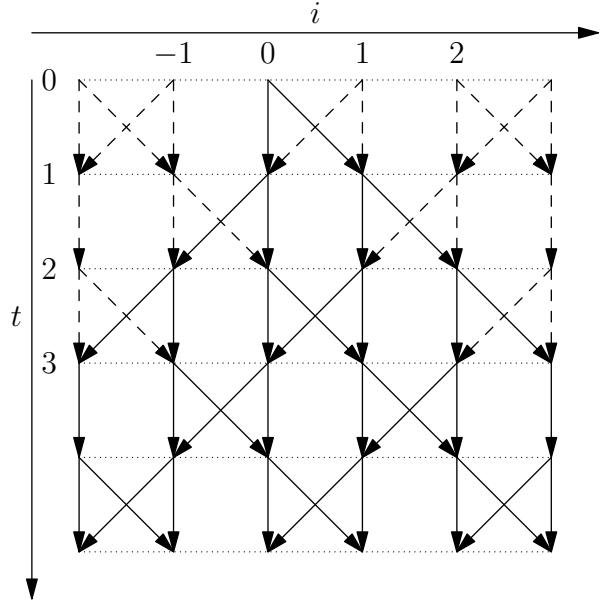}
\end{center}
\caption{The directed graph on which $\tilde{E}$ is supported. In full line, the first possible steps of paths in $\tilde{E}$.}
\label{fig:path}
\end{figure}

A colored path of $\tilde{E}$ is a pair $(w,s)$ where $w \in \tilde{E}$ and $s$ is a function from $w$ to $\{0,1\}$ (for any $u\in w$, $s(u)$ is called the color of $u$). We denote by $E$ the set of colored paths $(w = ((0,0),(i_1,1),\dots,(i_{t-1},t-1),(i,t)),s)$ of $\tilde{E}$ that satisfy the following constraints:
\begin{itemize}[topsep=0pt,itemsep=0pt,partopsep=0pt,parsep=0pt]
\item $s((0,0))=1$ and
\item for any $1 \leq j \leq t$, $s((i_j,j)) = 1- s((i_{j-1},j-1))$ if $|i_j-i_{j-1}| = 1$, in other words: if the step from $(i_{j-1},j-1)$ to $(i_j,j)$ is diagonal ($(-1,1)$ or $(1,1)$), then the color changes.
\end{itemize}

To be able to count elements of $E$ according to statistics defined later, we decompose $E$ into four subsets that form a partition of $E$: for any $k_1,k_2 \in \{0,1\}$, $E_{k_1,k_2}$ is the subset of $E$ of colored paths that finish in a node $(i,t)$ such that $i+t = k_1 \!\! \mod 2$ and $s(i,t) = k_2$.

In Flajolet-Sedgewick's symbolism, relations between these sets are
\begin{align}
E_{0,1} = 1 + \overset{E_{0,0}}{\phantom{|}} \searrow \underset{1}{\phantom{|}} + \underset{1}{\overset{E_{1,1}}{\downarrow}} + \underset{1}{\overset{E_{1,0}}{\downarrow}},\ 
E_{0,0} = \overset{E_{0,1}}{\phantom{|}} \searrow \underset{0}{\phantom{|}} + \underset{0}{\overset{E_{1,0}}{\downarrow}} + \underset{0}{\overset{E_{1,1}}{\downarrow}}, \label{eq:rec1} \\ 
E_{1,1} = \underset{1}{\phantom{|}} \swarrow \overset{E_{1,0}}{\phantom{|}} + \underset{1}{\overset{E_{0,1}}{\downarrow}} + \underset{1}{\overset{E_{0,0}}{\downarrow}},\
E_{1,0} = \underset{0}{\phantom{|}} \swarrow \overset{E_{1,1}}{\phantom{|}} + \underset{0}{\overset{E_{0,0}}{\downarrow}} + \underset{0}{\overset{E_{0,1}}{\downarrow}}. \label{eq:rec2}
\end{align}

Now, we enumerate these four subsets according to six statistics:
\begin{itemize}[topsep=0pt,itemsep=0pt,partopsep=0pt,parsep=0pt]
\item $n_v(w)$, number of vertical edges ($(0,1)$) in colored path $(w,s)$,
\item $n_d(w)$, number of diagonal edges ($(-1,1)$ or $(1,1)$) in colored path $(w,s)$,
\item $t(w)$, number of edges in colored path $(w,s)$,
\item $i(w) = i+t$ where $(i,t)$ is the final node of colored path $(w,s)$,
\item $n_c(w,s)$, number of color changes that occur on a vertical edge in colored path $(w,s)$ and
\item $n_k(w,s)$, number of no color changes that occur on a vertical edge in colored path $(w,s)$.
\end{itemize}
Some of these statistics are redundant, e.g.\ $n_c+n_k=n_v$ or $n_v+n_d=t$. 

To enumerate subsets $E_{k_1,k_2}$ according to these six statistics, we define generating functions by, for any $k_1,k_2 \in \{0,1\}$,
\begin{equation}
F_{k_1,k_2}(z_v,z_d,l,x,z_c,z_k) = \sum_{(w,s) \in E_{k_1,k_2}} z_v^{n_v(w)} z_d^{n_d(w)} l^{t(w)} x^{i(w)} z_c^{n_c(w,s)} z_k^{n_k(w,s)}.
\end{equation}

We can remark that if we take $z_d = (1-(p+r))$, $z_v = p+r$, $z_k=r/(p+r)$ and $z_c = p/(p+r)$, then, for any $k_1,k_2 \in \{0,1\}$,
\begin{align}
F_{k_1,k_2} = & \sum_{t \in \NN, i \in \ZZ | i+t = k_1 \!\!\!\!\! \mod 2} \prob{X_t = (i,k_2) | X_0 = (0,1)} l^t x^{i+t}. \label{eq:prob}
\end{align}

Hence, $C_8(i,t)$ is the coefficient of $l^t x^{i+t}$ in series development of $F_{0,1}-F_{0,0}+F_{1,1}-F_{1,0}$ evaluated in $z_d = (1-(p+r))$, $z_v = p+r$, $z_k=r/(p+r)$ and $z_c = p/(p+r)$.\par

To compute these generating functions, we use equations~\eqref{eq:rec1} and~\eqref{eq:rec2} that are translated at level of generating functions into
\begin{align}
F_{0,1} & = 1 + R\ F_{0,0} + K\ F_{1,1} + C\ F_{1,0}, \nonumber \\
F_{0,0} & = R\ F_{0,1} + K\ F_{1,0} + C\ F_{1,1}, \nonumber \\
F_{1,1} & = L\ F_{1,0} + K\ F_{0,1} + C\ F_{0,0}, \nonumber \\
F_{1,0} & = L\ F_{1,1} + K\ F_{0,0} + C\ F_{0,1}. \nonumber
\end{align}
with $R=z_dlx^2$, $L=z_dl$, $K = z_kz_vlx$ and $C=z_cz_vlx$.

This system is solved (by hand or with help of a formal computation software as Sage) and its resolution gives:
\begin{align}
F_{0,1} = & \frac{1- 2 C R K - C^{2} - R^{2} - K^{2}}{H} \\
F_{0,0} = & \frac{C^{2} R - L R^{2} + R K^{2} + 2 C K + L }{H} \\
F_{1,1} = & \frac{-K^3 + C (R+L) + K (C^2 + RL + 1)}{H} \\
F_{1,0} = & \frac{-C^3 + K (R+L) + C (K^2 + RL + 1)}{H}
\end{align}
with $H = \left((C+K)^{2} - (1-R)(1-L)\right) \left((C-K)^{2} - (1+R)(1+L)\right)$. And, so, 
\begin{equation}
F_{0,1}-F_{0,0}+F_{1,1}-F_{1,0} = \frac{C-R-K-1}{(C-K)^{2} - (1+R)(1+L)}
\end{equation}
that, evaluated in $C=plx$, $K=rlx$, $R=(1-p-r)l$ and $L=(1-p-r)lx^2$, gives the rational fraction~\eqref{eq:Cit}. That ends the proof of Proposition~\ref{prop:C8V} in the case $r \leq 1-p$.\par

\bigskip

\begin{remark} \label{rem:ordre}
In the case $1-p \leq r$, Lemma~\ref{lem:couplage} is changed by the following lemma (changes between the two lemmas are indicated in bold):\par
\begin{lemma} \label{lem:couplage2}
Let $i \in \ZZ$. Let $(s(j) : j \in \ZZ)$ a sequence of random variables whose values are in $\{0,1\}$ such that $s(i)$ is independent of $(s(j) : j \neq i)$. For any $u \in \{0,1\}$, we denote $s_u=T_u(s)$, then
\begin{itemize}[topsep=0pt,itemsep=0pt,partopsep=0pt,parsep=0pt]
\item with probability $\red{1-p}$, $s_u(i) = s(i)$ and $(s_u(j):j \neq i)$ are independent of $s(i)$;
\item with probability $\red{r+p-1}$, $s_u(i+(-1)^{i+u}) = \red{s(i)}$ and $(s_u(j):j \neq i+(-1)^i)$ are independent of $s(i)$;
\item with probability $\red{1-r}$, $s_u(i) = 1-s(i)$ and $(s_u(j):j \neq i)$ are independent of $s(i)$.
\end{itemize} 
\end{lemma}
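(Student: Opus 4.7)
The plan is to mimic the argument of Lemma~\ref{lem:couplage}, but with the coupling thresholds rearranged to exploit the opposite order $1-p\le r$ (equivalently $r+p-1\ge 0$). As in the previous lemma, by symmetry of $T_0$ and $T_1$ and of the two parities of $i$, it suffices to treat the case $u=0$ and $i$ even; the three other cases are obtained by re-indexing.

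Starting from Definition~\ref{def:loiqpr}, I would first re-read the transition of the pair $(s(i),s(i+1))$ under $T_0$ exactly as in the beginning of the proof of Lemma~\ref{lem:couplage}: when $s(i)\neq s(i+1)$, the pair is kept unchanged with probability $1-p$ and the two coordinates are both flipped with probability $p$; when $s(i)=s(i+1)$, the pair is kept unchanged with probability $r$ and both coordinates are flipped with probability $1-r$. Thus the local update is always of the form ``stay'' or ``flip both'', the probability of ``stay'' being $1-p$ or $r$ according to whether $(s(i),s(i+1))$ is unequal or equal.

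The core step is the coupling. I would introduce a single uniform variable $U\sim\mathcal U([0,1])$ independent of $s$, and declare that the pair ``stays'' iff $U$ is below the relevant stay-probability. Because $1-p\le r$, the interval $[0,1]$ splits into three pieces $[0,1-p]$, $(1-p,r]$, $(r,1]$ of respective lengths $1-p$, $r+p-1$, $1-r$. On $\{U\in[0,1-p]\}$ the pair always stays, so $s_0(i)=s(i)$ while $s_0(i+1)=s(i+1)$. On $\{U\in(r,1]\}$ the pair always flips, so $s_0(i)=1-s(i)$ while $s_0(i+1)=1-s(i+1)$. The interesting middle strip $\{U\in(1-p,r]\}$ is where the two subcases behave differently: if $s(i)\neq s(i+1)$ the pair flips and gives $s_0(i+1)=1-s(i+1)=s(i)$, while if $s(i)=s(i+1)$ the pair stays and gives $s_0(i+1)=s(i+1)=s(i)$; either way $s_0(i+1)=s(i)$. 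I would record this in a table analogous to Table~\ref{tab:RecC}.

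To conclude, I would verify the independence assertion on each of the three events, using that $s(i)$ is independent of $(s(j):j\neq i)$. On $\{U\in[0,1-p]\}$, $s_0(i+1)=s(i+1)$ is independent of $s(i)$, and the update of any other position involves only $U$ and $(s(j):j\neq i,i+1)$, hence is independent of $s(i)$. On $\{U\in(r,1]\}$ the same reasoning applies with $s_0(i+1)=1-s(i+1)$. On the middle strip $\{U\in(1-p,r]\}$, a direct case check shows $s_0(i)=s(i+1)$ in both subcases, which again is independent of $s(i)$; the other coordinates are handled as before. I expect the main delicacy to be bookkeeping the middle strip carefully and formulating the coupling table cleanly, but no new idea is required beyond the one already used for Lemma~\ref{lem:couplage}.
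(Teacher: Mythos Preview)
Your proposal is correct and follows essentially the same approach as the paper: the paper's proof consists of pointing to Table~\ref{tab:RecC2}, whose three rows (identity with probability $1-p$, swap with probability $r+p-1$, flip both with probability $1-r$) are exactly the three events you carve out with the thresholds $1-p\le r$ of your uniform variable~$U$. Your ``middle strip'' description (stay if equal, flip if unequal) is just another way of saying ``swap'', and your independence checks match the paper's implicit reasoning; the only cosmetic slip is that updates at other positions use their own independent randomness rather than the same~$U$, but this does not affect the argument.
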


Proof of this lemma is similar to the one of Lemma~\ref{lem:couplage}. Table~\ref{tab:RecC} becomes Table~\ref{tab:RecC2}. 

Then, these changes impact proof as following. First, we need to change Definition~\ref{def:XMA} of the random walk $X$ on $\ZZ \times \{0,1\}$ in consequence. Then, we enumerate set $E'$ of colored paths $(w,s)$ that have the following constraints: $s(0,0) = 1$ and $s(i_j,j) = s(i_{j-1},j)$ when $|i_j-i_{j-1}|=1$. We separate them as before in four subsets of colored paths whose generation functions $F'$ can be computed. We evaluate $F'_{0,1}-F'_{0,0}+F'_{1,1}-F'_{1,0}$ in $z_d = (p+r-1)$, $z_v = 2-(p+r)$, $z_k=(1-p)/(2-(p+r))$ and $z_c = (1-r)/(2-(p+r))$ to finally obtain the same rational fraction~\eqref{eq:Cit}.
\end{remark}

\begin{table}[h!] 
\begin{center}
\begin{tabular}{|l|c|c||c|c|}
\hline
probability & $s_0(i)$ & $s_0(i+1)$ & \raisebox{-6mm}{\includegraphics{typec1}} & \raisebox{-6mm}{\includegraphics{typec2}} \\
\hline
$1-p$ & $s(i)$ & $s(i+1)$ & \raisebox{-6mm}{\includegraphics{typec1p}} & \raisebox{-6mm}{\includegraphics{typec2p}} \\
\hline
$r+p-1$ & $s(i+1)$ & $s(i)$ & \raisebox{-6mm}{\includegraphics{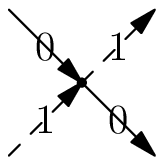}} & \raisebox{-6mm}{\includegraphics{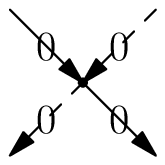}} \\
\hline
$1-r$ & $1-s(i)$ & $1-s(i+1)$ & \raisebox{-6mm}{\includegraphics{typec1r}} & \raisebox{-6mm}{\includegraphics{typec2r}} \\
\hline
\end{tabular}
\end{center}
\caption{Coupling in the Lemma~\ref{lem:couplage2}. Figures on the fourth and fifth columns represents the coupling when $s(i)=1-s(i+1)=0$ in the fourth and when $s(i)=s(i+1)=0$ in the fifth. 
}
\label{tab:RecC2}
\end{table}

\subsection{Proof of Theorem~\ref{thm:Cit}} \label{sec:extract}
To prove Theorem~\ref{thm:Cit}, we develop in formal series the rational fraction~\eqref{eq:Cit} according to $l$ and $x$ and we extract coefficients of $l^t x^{i+t}$ to get $C_8(i,t)$. With notations of~\eqref{eq:DDP}, the rational fraction~\eqref{eq:Cit} is
\begin{equation}
\frac{1 + \left(\Delta +x D\right) l}{1 + \Delta (1+x^2) l + P x^{2} l^{2}} \label{eq:Cit2}
\end{equation}

Let us begin the development of this rational fraction according to $l$.  
\begin{lemma} 
Take $t \geq 0$. The coefficient of $l^t$ in formal series of the rational fraction~\eqref{eq:Cit2} is
\begin{equation}
f(t) + (\Delta + xD) f(t-1)
\end{equation}
with, for any $t \geq 0$, 
\begin{equation}
f(t) = \sum_{k=0}^{\lfloor t/2 \rfloor} \binom{t-k}{k} (-1)^{t-k} \Delta^{t-2k} P^k x^{2k} (1+x^2)^{t-2k} \text{ and } f(-1) = 0.
\end{equation}
And, so, for any $t \geq 1$,
\begin{align}
& f(t) +  (\Delta + xD) f(t-1) = \nonumber \\
& \qquad (-1)^{t/2} P^{t/2} x^{t} \ind{t = 0 \text{ mod } 2} \nonumber \\
& \qquad  + \sum_{k=0}^{\lfloor \frac{t-1}{2} \rfloor} (-1)^{t+k} \left(\binom{t-1-k}{k-1} \Delta - \binom{t-1-k}{k} D x + \binom{t-k}{k} \Delta x^2 \right)  \Delta^{t-1-2k} P^k x^{2k} (1+x^2)^{t-1-2k}
\label{eq:lt}
\end{align}
with convention that $\binom{n}{-1} = 0$ for any $n \in \NN$. 
\end{lemma}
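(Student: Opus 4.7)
The plan is to expand the fraction~\eqref{eq:Cit2} as a formal series in $l$ by two elementary manipulations: a geometric series and a polynomial multiplication. First, since $1+\Delta(1+x^2)l+Px^2l^2 = 1-Y$ with $Y := -\Delta(1+x^2)l - Px^2l^2$ a polynomial in $l$ without constant term, the geometric series gives
\begin{equation*}
\frac{1}{1+\Delta(1+x^2)l+Px^2l^2} \;=\; \sum_{n \geq 0} (-1)^n \bigl(\Delta(1+x^2)l + Px^2 l^2\bigr)^n.
\end{equation*}
Expanding each bracket by the binomial theorem and reindexing via $t = n+k$ (with $0 \leq k \leq \lfloor t/2 \rfloor$), the coefficient of $l^t$ comes out to be exactly $f(t)$. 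Multiplying by the numerator $1+(\Delta+xD)l$ is then just an $l$-shift, so the coefficient of $l^t$ in~\eqref{eq:Cit2} equals $f(t)+(\Delta+xD)f(t-1)$ with the convention $f(-1)=0$. This establishes the first assertion.

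To reach~\eqref{eq:lt}, I would combine the two sums under a single summation index $k$. For every $k$ in the common range $0 \leq k \leq \lfloor (t-1)/2 \rfloor$, the factor $(-1)^{t-k}P^k x^{2k}\Delta^{t-1-2k}(1+x^2)^{t-1-2k}$ can be pulled out, leaving the bracket
\begin{equation*}
\binom{t-k}{k}\Delta(1+x^2) - \binom{t-1-k}{k}(\Delta + Dx).
\end{equation*}
Pascal's identity $\binom{t-k}{k} = \binom{t-1-k}{k} + \binom{t-1-k}{k-1}$ (with the convention $\binom{n}{-1}=0$, which also handles the boundary $k=0$) cancels the $\binom{t-1-k}{k}\Delta$ term and regroups the remainder into $\binom{t-1-k}{k-1}\Delta - \binom{t-1-k}{k}Dx + \binom{t-k}{k}\Delta x^2$, matching~\eqref{eq:lt}; the overall sign $(-1)^{t-k}$ can be rewritten as $(-1)^{t+k}$.

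The only subtlety is the top index $k=t/2$ when $t$ is even: this value lies in the summation range of $f(t)$ but not in that of $f(t-1)$, so it must be separated before the recombination above. Its contribution equals $(-1)^{t-t/2}\binom{t/2}{t/2}\Delta^{0}P^{t/2}x^{t}(1+x^2)^{0} = (-1)^{t/2}P^{t/2}x^{t}$, which is precisely the indicator summand in~\eqref{eq:lt}. The main obstacle is therefore purely a matter of bookkeeping, namely correctly splitting off this boundary term when $t$ is even and checking that the Pascal rewriting remains valid at the endpoint $k=0$; once this is handled the identity reduces to a direct polynomial computation.
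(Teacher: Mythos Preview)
Your proof is correct and follows the same approach as the paper's: geometric expansion of the denominator, binomial theorem, reindexing $t=n+k$, then multiplication by the numerator. In fact you give more detail than the paper does, since the paper's proof only carries out the first part (obtaining $f(t)+(\Delta+xD)f(t-1)$) and leaves the recombination into~\eqref{eq:lt} implicit; your use of Pascal's identity and your handling of the boundary term $k=t/2$ for even $t$ are exactly what is needed to fill that gap.
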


\begin{proof}
We develop \eqref{eq:Cit2} in formal series according to $l$. 
\begin{align*}
& \frac{1 + l \left(\Delta +x D \right)}{x^{2} l^{2} P + l \Delta (1+x^2) + 1} \\
& =  \left(1 + l \left(\Delta +x D \right) \right) \left( \sum_{i=0}^\infty (- x^{2} l^{2} P - l \Delta (1+x^2))^i \right) \\
& =  \left(1 + l \left(\Delta +x D \right) \right) \left( \sum_{i=0}^\infty (-1)^i l^i (P x^{2} l + \Delta (1+x^2))^i \right) \\
& =  \left(1 + l \left(\Delta +x D \right) \right) \left( \sum_{i=0}^\infty (-1)^i l^i \sum_{k=0}^i \binom{i}{k} (P x^{2} l)^k  (\Delta (1+x^2))^{i-k} \right) \\
& =  \left(1 + l \left(\Delta +x D \right) \right) \left( \sum_{i=0}^\infty \sum_{k=0}^i \binom{i}{k} (-1)^i \Delta^{i-k} l^{i+k} P^k x^{2k} (1+x^2)^{i-k} \right) \\
& = \left(1 + l \left(\Delta +x D \right) \right) \left( \sum_{j=0}^\infty l^j \sum_{k=0}^{\lfloor j/2 \rfloor} \binom{j-k}{k} (-1)^{j-k} \Delta^{j-2k} P^k x^{2k} (1+x^2)^{j-2k} \right) \\
& = \sum_{j=0}^{\infty} l^j (f(j) + (\Delta + xD) f(j-1)).
\end{align*}
\end{proof}

For any $t \geq 0$, $f(t) + (\Delta + xD) f(t-1)$ is polynomial according to $x$. Now, we extract coefficients of $x^j$ for any $j \geq 0$.
\begin{lemma} \label{lem:coeffjt}
The coefficient of $x^j$ in~\eqref{eq:lt} is,
\begin{itemize}[topsep=0pt,itemsep=0pt,partopsep=0pt,parsep=0pt]
\item if $j$ is odd (we denote $j' = \floor{j/2}$),
\begin{equation}
(-1)^{t+1} D \sum_{k=0}^{\min(j',t-1-j')} (-1)^k \binom{t-1-k}{k,j'-k,t-1-j'-k} \Delta^{t-1-2k} P^k; \label{eq:ji}
\end{equation}

\item if $j=t$ is even,
\begin{equation}
\sum_{k=0}^{t/2} (-1)^{k} \binom{t-1-k}{(t/2)-k} \binom{t/2}{k} \Delta^{t-2k} P^k; \label{eq:tt}
\end{equation}

\item if $j$ is even (we denote $j'=j/2$) and $j \neq t$, 
\begin{align}
& \sum_{k=0}^{\min(j'-1,t-1-j')} (-1)^{t+k} \binom{t-1-k}{t-j'-k} \binom{j'}{k} \Delta^{t-2k} P^k \nonumber \\
& + \ind{2j'<t} (-1)^{t-j'} \binom{t-1-j'}{j'-1} \Delta^{t-2j'} P^{j'}  + \ind{2j'>t} (-1)^{j'} \binom{j'}{t-j'} \Delta^{2j'-t} P^{t-j'}. \label{eq:jt}
\end{align}
\end{itemize}
\end{lemma}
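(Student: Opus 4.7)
The claim is about extracting the coefficient of $x^j$ from the explicit polynomial \eqref{eq:lt}. I would expand $(1+x^2)^{t-1-2k} = \sum_m \binom{t-1-2k}{m} x^{2m}$ in each of the three $k$-indexed summands; the parity of $j$ then selects which summands contribute. The $\Delta$ piece contributes to $x^j$ at $m = j/2-k$, the $Dx$ piece at $m=(j-1)/2-k$, and the $\Delta x^2$ piece at $m = j/2-k-1$.

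\emph{Odd case.} If $j = 2j'+1$, only the $Dx$ piece produces odd powers, so one obtains directly
\[
(-1)^{t+1} D \sum_{k=0}^{\min(j',\,t-1-j')} (-1)^k \binom{t-1-k}{k}\binom{t-1-2k}{j'-k}\,\Delta^{t-1-2k}P^k.
\]
The elementary factorial identity $\binom{t-1-k}{k}\binom{t-1-2k}{j'-k}=\binom{t-1-k}{k,\,j'-k,\,t-1-j'-k}$ then rewrites this as \eqref{eq:ji}.

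\emph{Even case.} If $j = 2j'$, both the $\Delta$ piece and the $\Delta x^2$ piece contribute, and when $j=t$ the isolated monomial $(-1)^{t/2}P^{t/2}x^t$ from \eqref{eq:lt} contributes an extra term at the formal index $k=t/2$. After separating the index ranges, the entire even case reduces, for the interior values of $k$ (those where both summands are in range), to the binomial identity
\[
\binom{t-1-k}{k-1}\binom{t-1-2k}{j'-k} + \binom{t-k}{k}\binom{t-1-2k}{j'-k-1} = \binom{t-1-k}{j'-1}\binom{j'}{k}. \qquad (\star)
\]
Since $\binom{t-1-k}{j'-1}=\binom{t-1-k}{t-j'-k}$, the right-hand side of $(\star)$ is exactly the summand appearing in \eqref{eq:jt} and \eqref{eq:tt}. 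The endpoint values of $k$ (namely $k=0$, $k=j'$ when $2j'<t$, $k=t-j'$ when $2j'>t$, and $k=t/2$ when $j=t$) produce precisely the isolated indicator terms in \eqref{eq:jt} and the $k\in\{0,t/2\}$ endpoints of \eqref{eq:tt}, with the convention $\binom{n}{-1}=0$ making the bookkeeping uniform.

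\emph{Main obstacle.} The identity $(\star)$ is the technical heart. My plan for it is: first apply Pascal's rule twice, $\binom{t-1-2k}{j'-k} = \binom{t-2k}{j'-k} - \binom{t-1-2k}{j'-k-1}$ and $\binom{t-k}{k}=\binom{t-1-k}{k}+\binom{t-1-k}{k-1}$, to rewrite the left-hand side of $(\star)$ as $\binom{t-1-k}{k-1}\binom{t-2k}{j'-k} + \binom{t-1-k}{k}\binom{t-1-2k}{j'-k-1}$; then use the subset-of-a-subset identity $\binom{m}{a}\binom{m-a}{b}=\binom{m}{a+b}\binom{a+b}{a}$ with $m=t-1-k$ and $a+b=j'-1$ to factor each summand as $\binom{t-1-k}{j'-1}$ times $\binom{j'-1}{k-1}$ or $\binom{j'-1}{k}$ respectively; and finally apply Pascal's rule once more, $\binom{j'-1}{k-1}+\binom{j'-1}{k}=\binom{j'}{k}$. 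Once $(\star)$ is established, everything else is range-checking and a direct comparison of the handful of endpoint contributions.
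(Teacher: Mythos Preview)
Your approach is correct and matches the paper's almost step for step: expand $(1+x^2)^{t-1-2k}$, separate by parity, and in the even case combine the two $\Delta$-pieces via the same binomial identity $(\star)$, with the mismatched endpoint of the two $k$-ranges producing the indicator terms and the isolated monomial $(-1)^{t/2}P^{t/2}x^t$ supplying the $k=t/2$ summand of \eqref{eq:tt}. Your Pascal-plus-subset-of-a-subset derivation of $(\star)$ is in fact more detailed than the paper's, which merely remarks that the identity ``could be proved using factorial notation of binomials''; one small clean-up: you need not single out $k=0$ as a special endpoint, since with the convention $\binom{n}{-1}=0$ the identity $(\star)$ already holds there and the $k=0$ term sits inside the main sum.
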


\begin{proof}
We develop the sum in~\eqref{eq:lt} according to $x$.
\begin{align*}
& \sum_{k=0}^{\lfloor \frac{t-1}{2} \rfloor} \left(\binom{t-1-k}{k-1} \Delta - \binom{t-1-k}{k} D x + \binom{t-k}{k} \Delta x^2 \right) (-1)^{t+k} \Delta^{t-1-2k} P^k x^{2k} (1+x^2)^{t-1-2k}\\
& = \sum_{k=0}^{\lfloor \frac{t-1}{2} \rfloor} \left(\binom{t-1-k}{k-1} \Delta - \binom{t-1-k}{k} D x + \binom{t-k}{k} \Delta x^2 \right) (-1)^{t+k} \Delta^{t-1-2k} P^k x^{2k} \left(\sum_{j=0}^{t-1-2k} \binom{t-1-2k}{j} x^{2j} \right) \\
& = \sum_{k=0}^{\lfloor \frac{t-1}{2} \rfloor} \sum_{j=0}^{t-1-2k} \binom{t-1-2k}{j} \left( \binom{t-1-k}{k-1} \Delta - \binom{t-1-k}{k} D x + \binom{t-k}{k} \Delta x^2 \right) (-1)^{t+k} \Delta^{t-1-2k} P^k x^{2(k+j)}.
\end{align*}

First, we make the change of variable $j'=k+j$, 
\begin{displaymath}
\sum_{k=0}^{\lfloor \frac{t-1}{2} \rfloor} \sum_{j'=k}^{t-1-k} \binom{t-1-2k}{j'-k} \left( \binom{t-1-k}{k-1} \Delta - \binom{t-1-k}{k} D x + \binom{t-k}{k} \Delta x^2 \right) (-1)^{t+k} \Delta^{t-1-2k} P^k x^{2j'}
\end{displaymath}
then, permuting the sums,
\begin{displaymath}
\sum_{j'=0}^{t-1} x^{2j'}\sum_{k=0}^{\min(j',t-1-j')} \binom{t-1-2k}{j'-k} \left( \binom{t-1-k}{k-1} \Delta - \binom{t-1-k}{k} D x + \binom{t-k}{k} \Delta x^2 \right) (-1)^{t+k} \Delta^{t-1-2k} P^k.
\end{displaymath}

The coefficient of $x^{2j'+1}$ is then
\begin{displaymath}
- D \sum_{k=0}^{\min(j',t-1-j')} \binom{t-1-2k}{j'-k} \binom{t-1-k}{k} (-1)^{t+k} \Delta^{t-1-2k} P^k
\end{displaymath}
that is~\eqref{eq:ji}.

The coefficient of $x^{2j'}$ is
\begin{displaymath}
\sum_{k=0}^{\min(j',t-1-j')} \binom{t-1-2k}{j'-k} \binom{t-1-k}{k-1} (-1)^{t+k} \Delta^{t-2k} P^k + \sum_{k=0}^{\min(j'-1,t-j')} \binom{t-1-2k}{j'-1-k} \binom{t-k}{k} (-1)^{t+k} \Delta^{t-2k} P^k
\end{displaymath}
that is, if $2j' = t$,
\begin{align*}
\sum_{k=0}^{t/2-1} (-1)^{t+k} \left(\binom{t-1-2k}{(t/2)-k} \binom{t-1-k}{k-1} + \binom{t-1-2k}{(t/2)-1-k} \binom{t-k}{k}\right) \Delta^{t-2k} P^k
\end{align*}
and adding $(-1)^{t/2} P^{t/2} x^{t} \ind{t = 0 \text{ mod } 2}$, we get~\eqref{eq:tt}.

And, in the case $2j' \neq t$, we obtain,
\begin{align*}
& \sum_{k=0}^{\min(j'-1,t-1-j')} (-1)^{t+k} \left(\binom{t-1-2k}{j'-k} \binom{t-1-k}{k-1} + \binom{t-1-2k}{j'-1-k} \binom{t-k}{k}\right) \Delta^{t-2k} P^k\\
& + \ind{\min(j',t-j')=j'} (-1)^{t-j'} \binom{t-1-j'}{j'-1} \Delta^{t-2j'} P^{j'} + \ind{\min(j',t-j')=t-j'} (-1)^{j'} \binom{j'}{t-j'} \Delta^{2j'-t} P^{t-j'},
\end{align*}
equivalent to
\begin{align*}
& \sum_{k=0}^{\min(j'-1,t-1-j')} (-1)^{t+k} \binom{t-1-k}{t-j'-k} \binom{j'}{k} \Delta^{t-2k} P^k\\
& + \ind{\min(j',t-j')=j'} (-1)^{t-j'} \binom{t-1-j'}{j'-1} \Delta^{t-2j'} P^{j'} + \ind{\min(j',t-j')=t-j'} (-1)^{j'} \binom{j'}{t-j'} \Delta^{2j'-t} P^{t-j'}
\end{align*}
that is~\eqref{eq:jt}

NB: The fact that $\displaystyle \left(\binom{t-1-2k}{j'-k} \binom{t-1-k}{k-1} + \binom{t-1-2k}{j'-1-k} \binom{t-k}{k}\right) = \binom{t-1-k}{t-j'-k} \binom{j'}{k}$ for any $t,k,j'$ could be proved using factorial notation of binomials and it is trivial if $k = 0$.
\end{proof}

\begin{proof}[Proof of Theorem~\ref{thm:Cit}]
We recall that $C_8(i,t)$ is the coefficient of $x^{i+t} l^t$ in the rational fraction~\eqref{eq:Cit} (see Proposition~\ref{prop:C8V}). These coefficients are given by Lemma~\ref{lem:coeffjt}. To conclude, we change variables from $(j=i+t,t)$ to $(i=j-t,t)$.
\end{proof}

\subsection{Particle system and proof of Proposition~\ref{prop:vitesse}} \label{sec:vitesse}
In this section, we suppose that $p+r \leq 1$. The case $p+r \geq 1$ could be treated in a similar way as explained in Remark~\ref{rem:ordre}.

First, we define a particle system that is related to the 8-vertex model. 
\begin{definition}[Particle system $\mathcal{P}(\mu;p,r)$] \label{def:part}
Let $\mu$ be a probability measure on $\{0,1\}^\ZZ$ and $p,r \in [0,1]$. The law $\mathcal{P}(\mu;p,r)$ is the following law on the set $(\ZZ \times \{0,1\})^{\ZZ \times \NN}$. At time $t=0$, for any $i \in \ZZ$, there is exactly one particle (named) $\prt{i}$ in position $p(\prt{i},0) = i$ and in a random state $s(\prt{i},0) \in \{0,1\}$ and $(s(\prt{i},0) : i \in \ZZ) \sim \mu$. Then, from time $t$ to time $t+1$, for any $i \in \ZZ$ such that $i+t$ is even, particles $\alpha$ and $\beta$ such that $p(\alpha,t) = i$ and $p(\beta,t) = i+1$ interact in the following way:
\begin{itemize}
\item with probability $p$, $p(\alpha,t+1) = i$, $s(\alpha,t+1) = s(\alpha,t)$, $p(\beta,t+1) = i+1$ and $s(\beta,t+1) = s(\beta,t)$;
\item with probability $1-p-r$, $p(\alpha,t+1) = i+1$, $s(\alpha,t+1) = 1-s(\alpha,t)$, $p(\beta,t+1) = i$ and $s(\beta,t+1) = 1-s(\beta,t)$.
\item with probability $r$, $p(\alpha,t+1) = i$, $s(\alpha,t+1) = 1-s(\alpha,t)$, $p(\beta,t+1) = i+1$ and $s(\beta,t+1) = 1-s(\beta,t)$.
\end{itemize}
To see a representation of these transitions, see fourth and fifth columns on Table~\ref{tab:RecC}: plain line represents the particle $\alpha$ and dashed line the particle $\beta$.
Moreover, all these transitions are independent. The law $\mathcal{P}(\mu;p,r)$ is then the law of the random variable $((p(\prt{i},t),s(\prt{i},t)) : i \in \ZZ, t \in \NN)$.

By definition, there is exactly one particle $\alpha$ at each time $t$ in position $i$; this particle will be denoted $\alpha(i,t)$ (and, simply, $\alpha_i$ if $t=0$).
\end{definition}

This particles system is related to 8-vertex model via the following proposition.
\begin{proposition} \label{prop:part}
If $( (p(\prt{i},t),s(\prt{i},t)) : i \in \ZZ, t \in \NN) \sim \mathcal{P}(\mu;p,r)$, then $(s(\alpha(i,t),t) : i \in \ZZ, t \in \NN) \sim \mathcal{L}(\mu;p,r)$.
\end{proposition}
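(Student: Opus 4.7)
The plan is to identify $e(i,t)$ with $s(\alpha(i,t),t)$, the state carried by whichever particle occupies position $i$ at time $t$ under $\mathcal{P}(\mu;p,r)$, and then to prove by induction on $t$ that this process has the same law as the one specified by Definition~\ref{def:loiqpr}. The bijection $i\mapsto \alpha(i,t)$ is well-defined at every time because exactly one particle sits at each position in $\ZZ$.

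The base case is immediate: at $t=0$ one has $\alpha(i,0)=\prt{i}$, so $s(\alpha(i,0),0)=s(\prt{i},0)$, and Definition~\ref{def:part} prescribes $(s(\prt{i},0):i\in\ZZ)\sim\mu$, which is exactly the initial condition of $\mathcal{L}(\mu;p,r)$.

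For the inductive step, assume the laws agree up to time $t$ and, without loss of generality, take $t$ even (the odd case is handled symmetrically with pairs $(2i-1,2i)$). Two things must be checked: (i) the conditional pair-transitions $(s(\alpha(2i,t),t),s(\alpha(2i+1,t),t))\mapsto(s(\alpha(2i,t+1),t+1),s(\alpha(2i+1,t+1),t+1))$ match the four transition probabilities of $T_0$; and (ii) the pair-transitions are mutually independent across $i$. Point (ii) is essentially free, since in Definition~\ref{def:part} the random choice made by the pair $\{\alpha(2i,t),\alpha(2i+1,t)\}$ is specified to be independent of the choice made by any other pair $\{\alpha(2j,t),\alpha(2j+1,t)\}$ with $j\ne i$, and these pairs partition the row.

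The bulk of the work is thus the case analysis for (i). Writing $k=s(\alpha(2i,t),t)$ and $k'=s(\alpha(2i+1,t),t)$, one enumerates the three outcomes of the interaction between $\alpha(2i,t)$ and $\alpha(2i+1,t)$ and reads off, in each outcome, both which particle ends up at each of the two sites and what its state is. Aggregating the probabilities of outcomes producing the same ordered pair of final states gives the conditional distribution of the pair at time $t+1$. The main point requiring care, and the only place where bookkeeping can go wrong, is that when $(k,k')=(k,1-k)$ two distinct particle-level outcomes, namely ``particles remain and states are preserved'' and ``particles swap with both states flipped'', produce the same ordered pair of edge states; their probabilities must be summed before comparing with the corresponding entry of $T_0$. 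Once these collapses are handled, each of the four targets of Definition~\ref{def:loiqpr} is matched on the nose, completing the induction and hence the proof.
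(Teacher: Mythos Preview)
Your approach is correct and is exactly the fleshing-out of the paper's one-line proof, which says only that the result is a consequence of the coupling of Lemma~\ref{lem:couplage}; the particle system of Definition~\ref{def:part} is built precisely to realise that coupling, so the position-indexed state process inherits the transition kernel of $T_0$ (resp.\ $T_1$). Your inductive verification, together with the observation that in the $(k,1-k)$ case two distinct particle-level outcomes collapse to the same edge-state pair and must be summed, is exactly this check.
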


\begin{proof}
This is a consequence of the coupling defined in Lemma~\ref{lem:couplage}.
\end{proof}

A first consequence of this proposition is the Lemma~\ref{lem:iid}.
\begin{proof}[Proof of Lemma~\ref{lem:iid}]
First, we begin by the following remark: if at time $t=0$, $\{s(\prt{i},0) : i \in \ZZ\}$ are independent, then for any $i \in \ZZ$, $\{s(\prt{i},t) : t \in \NN\}$ is independent of $\{s(\prt{j},t) : j \in \prive{\ZZ}{\{i\}}, t \in \NN\}$ (by Lemma~\ref{lem:couplage}).\par

Now, take any sequence $(t_i : i \in \ZZ)$ such that $t_{i+1}-t_i \in \{0,(-1)^{i+1+t_i}\}$. We know that the set of allowed positions for the particle $\alpha(i,t_i)$ is, if $i+t_i$ is even,
\begin{align*}
\{(j,t) : \prob{p(\alpha(i,t_i),t)=j} > 0\} & = \{(j,t) : i-t_i\leq j-t , j+t \leq i+t_i-1, t < t_i \} \cup \{(i,t_i)\}\\
& \quad \cup \{(j,t) : j-t \leq i-t_i , i+t_i +1 \leq j+t, t_i < t\};
\end{align*}
if $i+t_i$ is odd,
\begin{align*}
\{(j,t) : \prob{p(\alpha(i,t_i),t)=j} > 0\} & = \{(j,t) : i-t_i+1\leq j-t , j+t \leq i+t_i, t < t_i \} \cup \{(i,t_i)\}\\
& \quad \cup \{(j,t) : j-t \leq i-t_i - 1 , i+t_i \leq j+t, t_i < t\}.
\end{align*}
that intersects the set $\{(j,t_j) : j \in \ZZ\}$ in only one point that is $(i,t_i)$, see Figure~\ref{fig:mouv}. Hence, particle $\alpha(i,t_i)$ cannot be in any position $(j,t_j)$ for any $j \neq i$ and, so, for any $i,j \in \ZZ$, $i \neq j$, $\alpha(i,t_i) \neq \alpha(j,t_j)$. 

To conclude, as $(s(\prt{i},0) : \prt{i} \in \ZZ) \sim PM(1/2)$ (because $O \sim \overline{P}^8_\infty$), we get that $(e(i,t_i) = s(\alpha(i,t_i),t_i) : i \in \ZZ)$ are independent, see above. And, by Proposition~\ref{cor:inv2}, for any $i \in \ZZ$, $\prob{e(i,t_i) = 0} = \prob{e(i,t_i)=1} = 1/2$.

\begin{figure}
\begin{center}
\includegraphics{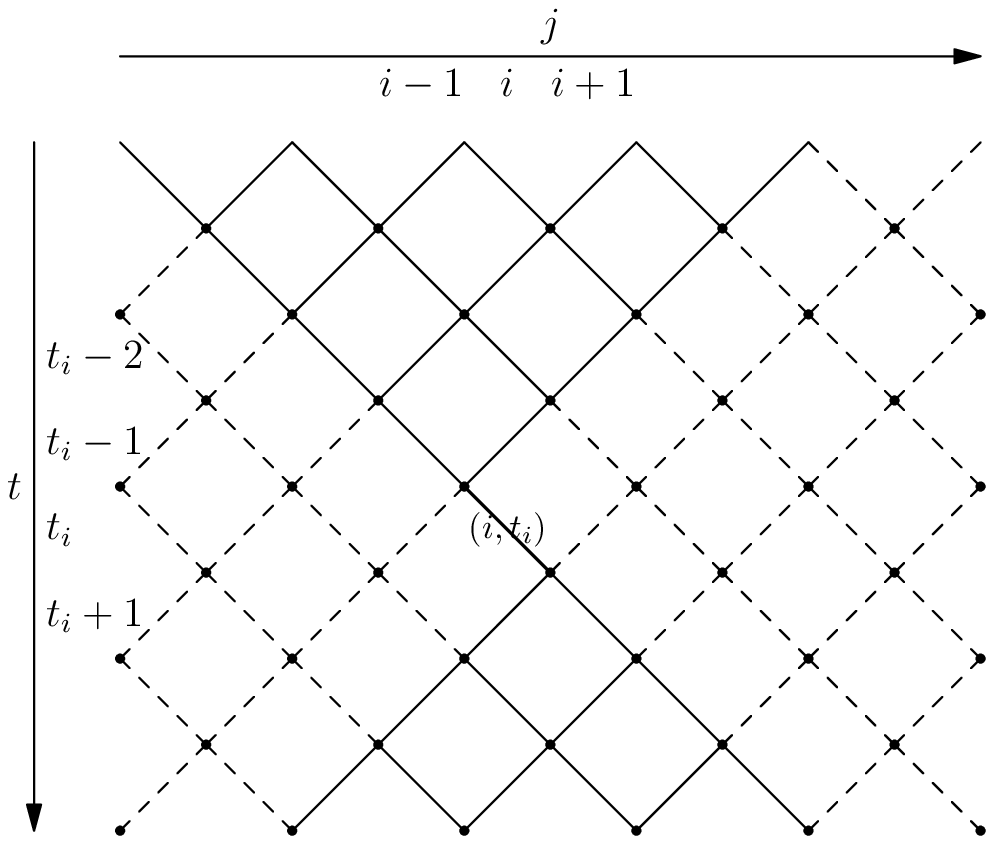}
\end{center}
\caption{In bold: the position $(i,t_i)$. In plain line: the set of allowed positions for the particle $\alpha(i,t_i)$. In dashed line: available edges for a set $((j,t_j):j \in \ZZ)$ that respects the condition $t_{i+1}-t_i \in \{0,(-1)^{i+1+t_i}\}$. There is no common position for plain and dashed line except in $(i,t_i)$.} \label{fig:mouv}
\end{figure}
\end{proof} 

A second consequence of Proposition~\ref{prop:part} is Proposition~\ref{prop:vitesse}.
Before to prove it, we need a first lemma that simulates the same system of particles, but with a change in the transition kernel.
\begin{lemma}
Let $p,r \in (0,1)$. Let $B = \{B_{i,t} : i \in \ZZ, t \in \NN\}$ (resp. $Z = \{Z_{i,t}: i \in \ZZ, t \in \NN\}$) two sets of i.i.d.\ variables of common law $\bern{1/2}$ (resp. $\bern{2m}$ with $m= \min (p,r)$).

At time $t=0$, for any $i \in \ZZ$, there is exactly one particle (named) $\prt{i}$ in position $p(\prt{i},0) = i$ and in a random state $s(\prt{i},0) \in \{0,1\}$ and $(s(\prt{i},0) : \prt{i} \in \ZZ) \sim \mu$ ($B$, $Z$ and $(s(\alpha_i,0) : i \in \ZZ)$ are mutually independent). Then, from time $t$ to time $t+1$, for any $i \in \ZZ$ such that $i+t$ is even, particles $\alpha$ and $\beta$ such that $p(\alpha,t) = i$ and $p(\beta,t) = i+1$ interact in the following way:
\begin{enumerate}
\item with probability $2m$ (that is when $Z_{i,t} =1$), $p(\alpha,t+1) = i$, $p(\beta,t+1) = i+1$, $s(\alpha,t+1) = B_{i,t}$ and  $s(\beta,t+1) = \begin{cases} 
B_{i,t} & \text{if } s(\alpha,t) = s(\beta,t) \\
1-B_{i,t} & \text{if } s(\alpha,t) = 1-s(\beta,t)
\end{cases}$;
\item with probability $p-m$ (a $Z_{i,t} =0$ case), $p(\alpha,t+1) = i$, $s(\alpha,t+1) = s(\alpha,t)$, $p(\beta,t+1) = i+1$ and $s(\beta,t+1) = s(\beta,t)$;
\item with probability $1-p-r$ (a $Z_{i,t} =0$ case), $p(\alpha,t+1) = i+1$, $s(\alpha,t+1) = 1-s(\alpha,t)$, $p(\beta,t+1) = i$ and $s(\beta,t+1) = 1-s(\beta,t)$;
\item with probability $r-m$ (a $Z_{i,t} =0$ case), $p(\alpha,t+1) = i$, $s(\alpha,t+1) = 1-s(\alpha,t)$, $p(\beta,t+1) = i+1$ and $s(\beta,t+1) = 1-s(\beta,t)$.
\end{enumerate}
Moreover, all these transitions are independent. Then, the law of $((p(\prt{i},t),s(\prt{i},t)) : i \in \ZZ, t \in \NN)$ is $\mathcal{P}(\mu;p,r)$.
\end{lemma}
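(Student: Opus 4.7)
The strategy is to verify that the construction above defines a process on $(\ZZ \times \{0,1\})^{\ZZ \times \NN}$ whose initial law is $\mu$ (immediate) and whose one-step transitions for each pair $(\prt{i},\prt{i+1})$ coincide, conditionally on $(s(\prt{i},t),s(\prt{i+1},t))$, with the three-outcome prescription of Definition~\ref{def:part}. Because $B$, $Z$ and $(s(\prt{i},0): i \in \ZZ)$ are mutually independent and each pair at time $t$ consumes only its own variables $Z_{i,t}$ and $B_{i,t}$, the new construction already has the correct pairwise independence between disjoint updates; only the one-pair marginal has to be checked.

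Label the three Definition~\ref{def:part} outcomes: (A) positions unchanged and both states preserved (probability $p$); (B) positions swapped and both states flipped (probability $1-p-r$); (C) positions unchanged and both states flipped (probability $r$). Cases $2$, $3$, $4$ of the new recipe reproduce (A), (B), (C) verbatim and contribute $p-m$, $1-p-r$, $r-m$ respectively, all non-negative by $m=\min(p,r)$ and $p+r\le 1$. It remains to analyse the event $\{Z_{i,t}=1\}$, of probability $2m$.

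On $\{Z_{i,t}=1\}$ positions stay put, so only (A) or (C) can arise. Consider the two input types. If $s(\prt{i},t)=s(\prt{i+1},t)$, then both outputs equal $B_{i,t}\sim\bern{1/2}$; the event (A) occurs iff $B_{i,t}=s(\prt{i},t)$, which has conditional probability $\tfrac12$, and otherwise we obtain (C). If $s(\prt{i},t)\neq s(\prt{i+1},t)$, the outputs are $(B_{i,t},1-B_{i,t})$; again (A) holds iff $B_{i,t}=s(\prt{i},t)$, with conditional probability $\tfrac12$, and (C) otherwise. In both cases case~1 contributes $m$ to (A) and $m$ to (C), independently of the input type. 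Summing, the new transition assigns probability $m+(p-m)=p$ to (A), $1-p-r$ to (B) and $m+(r-m)=r$ to (C), matching Definition~\ref{def:part} exactly. The resulting law is therefore $\mathcal{P}(\mu;p,r)$.

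The only subtle step is the case~1 analysis: the rule there \emph{ignores} $(s(\prt{i},t),s(\prt{i+1},t))$ in favour of the external uniform $B_{i,t}$, and one must check that despite this apparent loss of information the joint output still respects the ``same vs.\ different'' relation of the inputs and splits (A) and (C) with equal conditional probability for both input types. Once this case split is done, everything else is a routine tallying of probabilities.
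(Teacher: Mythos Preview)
Your proof is correct and follows the same route as the paper's own argument: reduce to showing that case~1 splits, conditionally on the input states, into the ``stay--preserve'' and ``stay--flip'' outcomes of Definition~\ref{def:part} with probability $1/2$ each, which follows from $B_{i,t}\sim\bern{1/2}$ being independent of $s(\alpha,t)$. The paper states this in one line; you have spelled out the two input cases and the independence bookkeeping more explicitly, but the idea is identical.
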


\begin{proof}
To prove the lemma, we have to show that where we are in the case 1, we can obtain with probability $1/2$ and $1/2$ conclusion of case 2 and 4. That is the case because $B_{i,t} = \begin{cases} s(\alpha,t) &  \text{w.p. }1/2,\\ 1-s(\alpha,t) & \text{w.p. } 1/2. \end{cases}$
\end{proof}

In the following,  for any particle $\alpha$, any $t,t_1,t_2\in \NN$ with $t_1< t_2$, we denote the event $I(\alpha,t) = \{\omega : Z_{p(\alpha_i,t),t} = 1\}$: ``particle $\alpha$ do the transition 1 at time $t$ (from time $t$ to $t+1$)'' and we denote $I_2(\alpha,t_1,t_2)$ the event: ``$\alpha$ do the transition 1 at time $t_1$ and $t_2$ with two different neighbor particles (i.e. $I(\alpha,t_1)$ and $I(\alpha,t_2)$ and $\alpha(p(\alpha,t_1) + (-1)^{p(\alpha,t_1)+t_1},t_1) \neq \alpha(p(\alpha,t_2) + (-1)^{p(\alpha,t_2)+t_2},t_2)$)''. Finally, we denote 
\begin{displaymath}
I_2(\alpha,t) = \bigcup_{(t_1,t_2) : 0 \leq t_1 < t_2 < t} I_2(\alpha,t_1,t_2), 
\end{displaymath}
the event that $\alpha$ do at least once the transition 1 with two different neighbor particles before time $t$.
Now, the correlation function of the 8-vertex model with initial law $\mu$ can be rewritten in this system of particles. First, we need the following lemma.
\begin{lemma} \label{lem:indVitesse}
Let $((p(\prt{i},t),s(\prt{i},t)) : i \in \ZZ, t \in \NN) \sim \mathcal{P}(\mu;p,r)$. Then, for any $i \in \ZZ$, for any $t$, $I_2(\alpha_i,t)$ is independent of $s(\alpha_0,0)$.
\end{lemma}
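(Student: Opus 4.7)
The plan is to exhibit $I_2(\alpha_i,t)$ as a measurable function of randomness that is, by construction, independent of the initial states $(s(\alpha_j,0): j \in \ZZ)$, and in particular of $s(\alpha_0,0)$.

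First I would reorganize the randomness underlying $\mathcal{P}(\mu;p,r)$ in the augmented form provided by the previous lemma. There we are given the family $Z=\{Z_{i,t}\}$ of i.i.d.\ $\bern{2m}$ variables, the family $B=\{B_{i,t}\}$ of i.i.d.\ $\bern{1/2}$ variables, and an additional independent family $U=\{U_{i,t}\}$ of i.i.d.\ variables uniform on $[0,1]$ used to select, whenever $Z_{i,t}=0$, among the three options of cases~2, 3 and~4 with probabilities $(p-m)/(1-2m)$, $(1-p-r)/(1-2m)$ and $(r-m)/(1-2m)$. The triple $(Z,B,U)$ is jointly independent of $(s(\alpha_j,0): j\in \ZZ)$, since by definition of $\mathcal{P}(\mu;p,r)$ all the transition randomness is independent of the initial configuration.

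Next I would observe the crucial structural property: the positions $(p(\alpha,t): t\in \NN)$ of every particle $\alpha$ depend only on $(Z,U)$ and not on $B$ or on $(s(\alpha_j,0): j\in \ZZ)$. Indeed, in each of the four cases enumerated in the previous lemma, the new positions $p(\alpha,t+1), p(\beta,t+1)$ are determined entirely by the choice of case, i.e.\ by $Z_{p(\alpha,t),t}$ and $U_{p(\alpha,t),t}$, while the states affect only the \emph{state} updates. Consequently, for every $i$, the trajectory $t\mapsto p(\alpha_i,t)$, and hence the identity of the neighbor $\alpha(p(\alpha_i,t)+(-1)^{p(\alpha_i,t)+t},t)$ at every time $t$, is a deterministic function of $(Z,U)$.

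Finally I would plug this into the definition of $I_2(\alpha_i,t)$. The event $\{I(\alpha_i,t_1)\cap I(\alpha_i,t_2)\}$ is $\{Z_{p(\alpha_i,t_1),t_1}=Z_{p(\alpha_i,t_2),t_2}=1\}$, and the condition that the two neighbors involved are distinct is an event on the neighbor identities, both types of event being $\sigma(Z,U)$-measurable. Taking the union over $0\le t_1<t_2<t$ keeps the event in $\sigma(Z,U)$. Since $\sigma(Z,U)$ is independent of $s(\alpha_0,0)$ by construction, $I_2(\alpha_i,t)$ is independent of $s(\alpha_0,0)$, which is the desired conclusion. I do not anticipate a serious obstacle; the only subtlety is verifying that nowhere in the definition of $I_2$ does the \emph{state} of any particle intervene, which is immediate from the case-by-case description of the dynamics.
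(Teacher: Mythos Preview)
Your proof is correct and follows the same idea as the paper's own argument: the event $I_2(\alpha_i,t)$ is determined entirely by the transition randomness (here $(Z,U)$), which is independent of the initial states, hence of $s(\alpha_0,0)$. The paper states this in one line, invoking only that $Z$ is independent of $(s(\alpha_j,0):j\in\ZZ)$; your version is more careful in that you explicitly introduce the auxiliary family $U$ to resolve the choice among cases~2--4 and verify that the particle positions (and hence the random index in $Z_{p(\alpha_i,t),t}$ and the neighbor identities) are $\sigma(Z,U)$-measurable, a point the paper leaves implicit.
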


\begin{proof}
This is induce by the fact that for any $i \in \ZZ$, $I(\alpha_i,t)$ is independent of $s(\alpha_0,0)$, that is true because $Z$ is independent of $(s(\alpha_i,0): i \in \ZZ)$.
\end{proof}

Now, we can express the correlation function of the 8-vertex model with any boundary condition and $a+c=b+d$.
\begin{lemma} \label{lem:CitVitesse}
Let $\mu$ be any probability measure on $\{0,1\}^\ZZ$ and let $O \sim \mathcal{L}(\mu;p,r)$. Then the correlation function $C$ satisfies
\begin{equation}
\left| \sqrt{\frac{\var{e(i,t)}}{\var{e(0,0)}}}\, C((0,0);(i,t)) - C_8(i,t) \right|  \leq 2 \prob{I_2(\alpha(i,t),t)^c}
\end{equation}
\end{lemma}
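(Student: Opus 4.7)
My plan is to use the alternative simulation of $\mathcal{P}(\mu;p,r)$ with the extra randomness $B_{i,t}\sim\bern{1/2}$ introduced just before Lemma~\ref{lem:CitVitesse}. Since $e(0,0), e(i,t) \in \{0,1\}$, a direct computation gives
\begin{equation*}
\sqrt{\var{e(i,t)}/\var{e(0,0)}}\,C((0,0);(i,t)) = \frac{\cov{e(0,0)}{e(i,t)}}{\var{e(0,0)}} = \prob{e(i,t)=1\mid e(0,0)=1} - \prob{e(i,t)=1\mid e(0,0)=0},
\end{equation*}
and Proposition~\ref{prop:C8V} combined with the $\PM{1/2}$-symmetry coming from Proposition~\ref{cor:inv2} shows that the same difference, computed under $\overline{P}^8_\infty$, equals $C_8(i,t)$. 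The problem thus reduces to bounding the change in this difference when $\mu$ replaces $\PM{1/2}$, the whole dynamical randomness being shared.

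Setting $\mathcal{A}:=I_2(\alpha(i,t),t)$, Lemma~\ref{lem:indVitesse} (whose argument applies verbatim with $\alpha(i,t)$ in place of $\alpha_i$) gives independence of $\mathcal{A}$ from $e(0,0)$, and $\prob{\mathcal{A}}$ is the same under $\mu$ and under $\PM{1/2}$ since $\mathcal{A}$ is determined by the dynamics alone. Conditioning then yields
\begin{equation*}
\prob{e(i,t)=1\mid e(0,0)=k} = \prob{\mathcal{A}}\,\prob{e(i,t)=1\mid e(0,0)=k,\mathcal{A}} + \prob{\mathcal{A}^c}\,\prob{e(i,t)=1\mid e(0,0)=k,\mathcal{A}^c},
\end{equation*}
with the analogous decomposition for $C_8(i,t)$ under $\overline{P}^8_\infty$.

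The core step is the claim: \emph{on $\mathcal{A}$ and for every $\mu$, the conditional law of $e(i,t)$ given $e(0,0)$ is $\bern{1/2}$}. Let $\tau<t$ be the largest time at which $\alpha(i,t)$ participates in a case-$1$ transition, and let $B^\star$ denote the $B$-variable used in that transition. On $\mathcal{A}$, case~$1$ occurs at least twice for $\alpha(i,t)$, so $\tau$ is well-defined; moreover $\tau$ and the index of $B^\star$ are measurable with respect to the transition-type randomness only, which is independent of the $B$'s. By the rule of case~$1$, $s(\alpha(i,t),\tau+1)=B^\star\oplus Y$, where $Y=0$ if $\alpha(i,t)$ is the left particle of the pair and $Y=s(\text{left},\tau)\oplus s(\alpha(i,t),\tau)$ if it is the right one; in either case $Y$ does not involve $B^\star$. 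From $\tau+1$ to $t$ the particle $\alpha(i,t)$ undergoes only transitions of types $2,3,4$, each of which acts as a deterministic involution on its state, so $s(\alpha(i,t),t)=B^\star\oplus W$ for some $W$ that is a function of the transition types, of the initial states, and of the $B$'s other than $B^\star$. Since $B^\star$ is $\bern{1/2}$ independently of all this (in particular of $\mathcal{A}$, $W$, and $e(0,0)$), $e(i,t)=B^\star\oplus W$ is $\bern{1/2}$ conditionally on $(\mathcal{A},e(0,0))$, which proves the claim.

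With the claim in hand, the $\mathcal{A}$-bracket $\prob{e(i,t)=1\mid e(0,0)=1,\mathcal{A}}-\prob{e(i,t)=1\mid e(0,0)=0,\mathcal{A}}$ vanishes under both measures, so the difference between the $\mu$-quantity and $C_8(i,t)$ reduces to $\prob{\mathcal{A}^c}$ times the difference of the corresponding $\mathcal{A}^c$-brackets. Each such bracket lies in $[-1,1]$, so the difference of two of them is at most $2$ in absolute value, giving the announced bound. The main subtlety is making rigorous the use of the random index $\tau$: one must exploit the fact that $\tau$ and the position associated with $B^\star$ are measurable with respect to the case-selection randomness alone, so that $B^\star$ remains a fair coin independent of everything else.
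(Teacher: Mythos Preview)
Your argument is correct. Both proofs rest on the same core observation---once the particle $\alpha(i,t)$ has undergone a case-$1$ transition, its state is $B^\star\oplus(\text{something independent of }B^\star)$ and hence a fair coin conditionally on everything else---but the decompositions differ. The paper splits the difference $\prob{e(i,t)=1\mid e(0,0)=1}-\prob{e(i,t)=1\mid e(0,0)=0}$ according to whether $\alpha(i,t)=\alpha_0$ or not: the $\{\alpha(i,t)=\alpha_0\}$ part is identified with $C_8(i,t)$ via the random-walk representation~\eqref{eq:COT2}, the $\{\alpha(i,t)\neq\alpha_0\}\cap I_2$ part vanishes by the fair-coin fact, and the remainder is bounded crudely. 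You instead compare the $\mu$-quantity directly to the same expression under $\PM{1/2}$ (which equals $C_8$ by the $0\leftrightarrow 1$ symmetry of $\overline{P}^8_\infty$), condition both on $I_2$ versus $I_2^c$, and use that on $I_2$ the conditional law of $e(i,t)$ is $\bern{1/2}$ under \emph{any} initial measure. This avoids invoking the particle-tracking identification of $C_8$ and is slightly more self-contained; it also makes explicit (via the stopping-time argument for $\tau$ and the measurability of its index with respect to the $Z$-randomness alone) the justification of ``$I_2\Rightarrow s(\alpha(i,t),t)\sim\bern{1/2}$'' that the paper states tersely. Note, incidentally, that your fair-coin argument uses only the existence of \emph{one} case-$1$ transition; the stronger event $I_2$ is not needed here, only later in Lemma~\ref{lem:vitesse} where two interactions with distinct neighbours make the waiting time tractable.
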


\begin{proof}
Let $O = (e(i,t) : i \in \ZZ, t \in \NN) \sim \mathcal{L}(\mu;p,r)$. Then,
\begin{align*}
\cov{e(0,0)}{e(i,t)} & = \prob{e(0,0) = 1 = e(i,t) } - \prob{e(0,0) = 1} \prob{e(i,t)=1}\\
& = \prob{e(0,0)=1} \big(\prob{e(i,t) = 1 | e(0,0) = 1} - \prob{e(i,t)=1|e(0,0)=1} \prob{e(0,0)=1} \nonumber\\
& \qquad - \prob{e(i,t)=1|e(0,0)=0} \prob{e(0,0)=0} \big) \\
& = \var{e(0,0)} (\prob{e(i,t) = 1 | e(0,0) = 1} - \prob{e(i,t) = 1 | e(0,0) = 0} )\\
& = \var{e(0,0)} (\prob{s(\alpha(i,t),t) = 1 | s(\alpha_0,0) = 1} - \prob{s(\alpha(i,t),t) = 1 | s(\alpha_0,0) = 0} ).
\end{align*}
Hence,
\begin{align*}
& \sqrt{\frac{\var{e(i,t)}}{\var{e(0,0)}}}\, C((0,0);(i,t)) \\
& \qquad  = \underbrace{\prob{s(\prt{0},t) = 1 \text{ and } \alpha(i,t) = \prt{0}| s(\prt{0},0) = 1} - \prob{s(\prt{0},t) = 1 \text{ and } \alpha(i,t) = \prt{0} | s(\prt{0},0) = 0}}_{=C_8(i,t)} \nonumber\\
& \qquad \quad + \prob{s(\alpha(i,t),t) = 1 \text{ and } \alpha(i,t) \neq  \prt{0} \text{ and } I_2(\alpha(i,t),t) | s(\prt{0},0) = 1} \nonumber \\
& \qquad \quad \quad \quad - \prob{s(\alpha(i,t),t) = 1 \text{ and } \alpha(i,t) \neq \prt{0} \text{ and } I_2(\alpha(i,t),t) | s(\prt{0},0) = 0}\nonumber \\
& \qquad \quad + \prob{s(\alpha(i,t),t) = 1 \text{ and } \alpha(i,t) \neq  \prt{0} \text{ and } I_2(\alpha(i,t),t)^{c} | s(\prt{0},0) = 1} \nonumber \\
& \qquad \quad - \prob{s(\alpha(i,t),t) = 1 \text{ and } \alpha(i,t) \neq  \prt{0} \text{ and } I_2(\alpha(i,t),t)^{c} | s(\prt{0},0) = 0}.
\end{align*} 
But,
\begin{align*}
& \prob{s(\alpha(i,t),t) = 1 \text{ and } \alpha(i,t) \neq  \prt{0} \text{ and } I_2(\alpha(i,t),t) | s(\prt{0},0) = 1} \\
& \quad = \frac{1}{2} \prob{\alpha(i,t) \neq  \prt{0} \text{ and } I_2(\alpha(i,t),t) | s(\prt{0},0) = 1} \text{(because $I_2(\alpha(i,t),t) \Rightarrow s(\alpha(i,t),t) \sim \bern{1/2}$)}\\
& \quad = \frac{1}{2} \prob{\alpha(i,t) \neq  \prt{0} \text{ and } I_2(\alpha(i,t),t)} \\
& \qquad \text{(by Lemma~\ref{lem:indVitesse} and the fact that trajectories of particles are independent of their initial states)}.
\end{align*}
Similarly,
\begin{displaymath}
\prob{s(\alpha(i,t),t) = 1 \text{ and } \alpha(i,t) \neq  \prt{0} \text{ and } I_2(\alpha(i,t),t) | s(\prt{0},0) = 0}  = \frac{1}{2} \prob{\alpha(i,t) \neq  \prt{0} \text{ and } I_2(\alpha(i,t),t)}.
\end{displaymath}

Hence,
\begin{align*}
\left| \sqrt{\frac{\var{e(i,t)}}{\var{e(0,0)}}}\, C((0,0);(i,t)) - C_8(i,t)\right| & \leq \prob{I_2(\alpha(i,t),t)^{c} | s(\prt{0},0)=1} + \prob{I_2(\alpha(i,t),t)^{c} | s(\prt{0},0)=0} \\
& = 2 \prob{I_2(\alpha(i,t),t)^{c}} \text{(by Lemma~\ref{lem:indVitesse})}\\
& = 2 \prob{I_2(\alpha_0,t)^{c}}
\end{align*}
because event $I_2(\prt{i},t)$ occurs with the same probability for any particle $\prt{i}$.
\end{proof}

Now, to prove Proposition~\ref{prop:vitesse}, we have to find a bound on $\prob{I_2(\prt{0},t)^c}$.

\begin{lemma} \label{lem:vitesse}
Let $m = \min(p,r)$.
\begin{equation}
\prob{I_2(\prt{0},t)^c} \leq (1-m)^{t-1-\lfloor t/2 \rfloor} + (1-m)^{\lfloor t/2 \rfloor} - (1-m)^{t-1} 
\end{equation}
\end{lemma}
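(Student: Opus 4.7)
The plan is to bound $\prob{I_2(\alpha_0,t)^c}$ via inclusion--exclusion on two independent events supported on disjoint halves of the time interval. Setting $a = \lfloor t/2 \rfloor$ and $b = t - 1 - a$, the target bound rewrites as
\begin{equation*}
(1-m)^a + (1-m)^b - (1-m)^{a+b} = 1 - \bigl(1-(1-m)^a\bigr)\bigl(1-(1-m)^b\bigr),
\end{equation*}
so it suffices to exhibit two independent events $E_A, E_B$ with $\prob{E_A^c} = (1-m)^a$, $\prob{E_B^c} = (1-m)^b$, and $E_A \cap E_B \subseteq I_2(\alpha_0,t)$. The bound will then follow from $\prob{I_2(\alpha_0,t)^c} \leq \prob{E_A^c \cup E_B^c}$ and the independence of $E_A$ and $E_B$.

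To construct such events, I would use the coupling behind Lemma~\ref{lem:couplage}: at each time $s$, the transition at $\alpha_0$'s pair is driven by the independent pair $(Z_{p(\alpha_0,s),s}, B_{p(\alpha_0,s),s})$, with $Z \sim \bern{2m}$, $B \sim \bern{1/2}$, mutually independent and independent of the past. The sub-events $\{Z=1,\,B=0\}$ and $\{Z=1,\,B=1\}$ are two disjoint refinements of the transition 1 event at time $s$, each with probability $m$, and these sub-events are independent across distinct times. I would then take $I_A = [0, a-1]$ and $I_B = [a, t-2]$, and define $E_A$ (resp.\ $E_B$) as the event that some prescribed refinement of transition 1 occurs for $\alpha_0$'s pair at some time in $I_A$ (resp.\ $I_B$). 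Since they depend on disjoint families of $(Z,B)$-coins, $E_A$ and $E_B$ are independent with the required marginals.

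The crux is verifying $E_A \cap E_B \subseteq I_2(\alpha_0,t)$: on this event transition 1 does occur at two times $s\in I_A$ and $s' \in I_B$, and I must show that the corresponding neighbors $N(s), N(s')$ of $\alpha_0$ are necessarily distinct particles. I expect this step to be the main technical obstacle. The $B$-coin does not act on positions, so the conclusion cannot come from the $B$-label alone; instead the argument should exploit the dynamics of the side-parity $p(\alpha_0,\cdot)+\cdot \bmod 2$, which determines on which side the neighbor sits: a non-swap transition flips this parity, whereas a swap preserves it. Tracking $N(s)$ between $s$ and $s'$ then shows that any equality $N(s) = N(s')$ imposes a rigid pattern on the intermediate transitions, which should be ruled out by refining, if necessary, the prescribed sub-events with the parity of $p(\alpha_0,s)+s$ so that the two transitions 1 at $s$ and $s'$ are forced to sit on opposite sides of $\alpha_0$'s trajectory. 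Once this combinatorial inclusion is established, the lemma follows from the inclusion--exclusion identity $\prob{E_A^c \cup E_B^c} = \prob{E_A^c} + \prob{E_B^c} - \prob{E_A^c}\prob{E_B^c}$.
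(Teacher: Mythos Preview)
Your plan has a genuine gap at exactly the step you flag as ``the main technical obstacle'': the inclusion $E_A \cap E_B \subseteq I_2(\alpha_0, t)$ need not hold, and neither of your proposed refinements rescues it. The $B$-coin refinement, as you note yourself, does not act on positions, so it cannot constrain which particle is the neighbor. The side-parity refinement fails for a concrete reason: even if transition~1 at time $s$ places the neighbor $\beta$ to the right of $\alpha_0$ and transition~1 at time $s'$ places the neighbor to the left, these can still be the same particle. The relative order of $\alpha_0$ and $\beta$ changes only when they swap with each other (transition~3 in the coupling), and nothing in $E_A \cap E_B$ forbids such a swap at some intermediate time $u \in (s, s')$. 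Worse, the side-parity $p(\alpha_0,s)+s \bmod 2$ is determined by the entire swap history up to time $s$, so conditioning on it destroys the product structure over disjoint time-indexed $(Z,B)$-coins that your independence argument for $E_A$ and $E_B$ relies on.

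The paper avoids this by arguing sequentially rather than in parallel. It lets $T_1$ be the first time transition~1 occurs (so $T_1 \sim \mathrm{Geo}(2m)$) and then uses the one structural fact you correctly identify---that the neighbor of $\alpha_0$ at two \emph{consecutive} times is never the same particle---to bound $T_2 - T_1$: in any two consecutive times at most one can have neighbor equal to $N(T_1)$, which yields the stochastic domination $\lceil (T_2 - T_1)/2 \rceil \leq T$ with $T \sim \mathrm{Geo}(2m)$, and the bound follows from computing $\prob{2T + T_1 > t}$. Note that this produces powers of $(1-2m)$, not $(1-m)$; the displayed statement of the lemma carries a typo, and the correct base is $1-2m$ (this is what is actually used in the proof of Proposition~\ref{prop:vitesse}, where $1-2m = \lambda(p,r)$). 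Your introduction of the $B$-coin to halve the success probability from $2m$ to $m$ is therefore chasing the wrong constant.
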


\begin{proof}
We denote $T_1(\omega) = \inf \{t : \omega \in I(\prt{0},t)\}$. By definition of $I(\prt{0},t)$, $T_1 \sim \text{Geo}(2m)$ (i.e. it is a geometric random variable whose probability of success is $2m$). Now, denote $T_2(\omega) = \inf \{t : \omega \in I_2(\prt{0},T_1(\omega),t)\}$. Due to the fact that two particles cannot interact twice during two successive steps of time, $\left \lceil \frac{T_2-T_1}{2} \right \rceil \leq T$ in law where $T \sim \text{Geo}(2m)$. Hence,
\begin{align*}
\prob{I_2(\prt{0},t)} & \geq \prob{T_2 \leq t}\\
& \geq \prob{2T + T_1 \leq t} \\
& = \sum_{t'=1}^{\lfloor t/2 \rfloor} \prob{T=t'} \prob{T_1 \leq t-2t'}\\ 
& = \sum_{t'=1}^{\lfloor t/2 \rfloor} 2m (1-2m)^{t'-1} (1-(1-2m)^{t-2t'}) \\
& = 2m \sum_{t'=1}^{\lfloor t/2 \rfloor} \left( (1-2m)^{t'-1} - (1-2m)^{t-t'-1} \right) \\
& = 2m \left(\frac{1 - (1-2m)^{\lfloor t/2 \rfloor}}{2m} - (1-2m)^{t-1} \frac{(1-2m)^{-{\lfloor t/2 \rfloor}}-1}{2m} \right)\\
& = 1 - (1-2m)^{\lfloor t/2 \rfloor} + (1-2m)^{t-1} - (1-2m)^{t - \lfloor t/2 \rfloor -1}.
\end{align*}
\end{proof}

Now, we can do the proof of Proposition~\ref{prop:vitesse}.

\begin{proof}[Proof of Proposition~\ref{prop:vitesse}]
In the case $p+r \leq 1$, by lemmas~\ref{lem:CitVitesse} and~\ref{lem:vitesse},
\begin{equation} \label{eq:vitesseP}
\left| \sqrt{\frac{\var{e(i,t)}}{\var{e(0,0)}}}\, C((0,0),(i,t)) - C_8(i,t)\right| \leq  2 \left((1-2m)^{\lfloor t/2 \rfloor} + (1-2m)^{t - \lfloor t/2 \rfloor -1} - (1-2m)^{t-1}\right)
\end{equation} 
where $m = \min(p,r)$.

In the case $p+r \geq 1$, a similar proof permits to find~\eqref{eq:vitesseP} with $m = \min(1-p,1-r)$.

To conclude, we remark that when $p+r \leq 1$, then $p \leq 1-r$ and $r \leq 1-p$, so $\min(1-p,1-r,p,r) = \min(p,r)$; and when $p+r \geq 1$, $\min(p,r,1-p,1-r) = \min(1-p,1-r)$. And, finally, we remark that $1-2 \min(p,r,1-p,1-r) = \lambda(p,r)$. That is ending the proof.
\end{proof}

\section{Asymptotic of $C_8(i,t)$: proof of Theorem~\ref{thm:asympt}} \label{sec:asympt}
In this Section, we suppose that $p+r \neq 1$. Case $p+r=1$ has been treated in Remark~\ref{rem:EASY}. Proof of Theorem~\ref{thm:asympt} is done in two steps. In a first step, the asymptotic is proved when $b=0$ (i.e. when $r=0$ and $0 \leq p < 1$). And, in a second step, it is generalized for any $(a,b,c,d)$ such that $a+c=b+d$.

\subsection{Case $r=0$ and $0 \leq p <1$} \label{sec:asympt0}
In this case, $\lambda(p,0) = 1$, see~\eqref{eq:lambda}, and, so, Theorem~\ref{thm:asympt} is
\begin{proposition} \label{prop:aux}
If $r=0$ and $0 \leq p <1$, then there exists $c>0$ such that, for any $t\in \NN$, for any $i \in \ZZ$
\begin{displaymath}
C_8(i,t) \leq \frac{c}{\sqrt{t}}.
\end{displaymath}
\end{proposition}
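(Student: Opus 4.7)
The plan is to reduce the estimate to a local central limit theorem for the Markov chain $(X_t)$ of Definition~\ref{def:XMA} in the degenerate case $r=0$, which I analyse via a $2{\times}2$ transfer-matrix Fourier computation. First I would observe that when $r=0$, the only two transitions in~\eqref{eq:XMA} both flip the $k$-coordinate, so the colour evolves deterministically as $k_t=1\oplus(t\bmod 2)$ starting from $X_0=(0,1)$. Writing $X_t=(Y_t,1\oplus t)$, the position $(Y_t)$ is a Markov chain on $\ZZ$ with $Y_0=0$ and
\[
Y_{t+1}=\begin{cases} Y_t & \text{w.p.\ }p,\\ Y_t+(-1)^{Y_t+t} & \text{w.p.\ }1-p.\end{cases}
\]
Identity~\eqref{eq:COT2} then gives $C_8(i,t)=(-1)^t\,\prob{Y_t=i\mid Y_0=0}$, so it is enough to prove $\sup_{i\in\ZZ}\prob{Y_t=i}=O(1/\sqrt{t})$.

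Next I would split the characteristic function by the parity of $Y_t+t$: setting $\phi_t^+(\theta)=\esp{}{e^{i\theta Y_t}\ind{Y_t+t\text{ even}}}$ and $\phi_t^-(\theta)$ analogously for odd parity, the transition rule yields
\[
\begin{pmatrix}\phi^+_{t+1}\\\phi^-_{t+1}\end{pmatrix}=A(\theta)\begin{pmatrix}\phi^+_t\\\phi^-_t\end{pmatrix},\qquad A(\theta)=\begin{pmatrix}(1-p)e^{i\theta}&p\\ p&(1-p)e^{-i\theta}\end{pmatrix},
\]
with $(\phi_0^+,\phi_0^-)=(1,0)$. The eigenvalues are $\lambda_\pm(\theta)=(1-p)\cos\theta\pm\sqrt{p^2-(1-p)^2\sin^2\theta}$. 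A Taylor expansion gives $\lambda_+(\theta)=1-\tfrac{1-p}{2p}\theta^2+O(\theta^4)$ near $\theta=0$ and, by the mirror symmetry $\theta\mapsto\pi+\eta$, $\lambda_-(\pi+\eta)=-1+\tfrac{1-p}{2p}\eta^2+O(\eta^4)$ near $\theta=\pi$; on any compact subset of $[-\pi,\pi]\setminus\{0,\pi\}$ continuity forces $\max(|\lambda_+(\theta)|,|\lambda_-(\theta)|)\leq\rho<1$ (using $p\in(0,1)$ strictly).

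The core of the proof is then Fourier inversion
\[
\prob{Y_t=i}=\frac{1}{2\pi}\int_{-\pi}^{\pi}e^{-i\theta i}\,\phi_t(\theta)\,d\theta,\qquad \phi_t(\theta)=\alpha_+(\theta)\lambda_+(\theta)^t+\alpha_-(\theta)\lambda_-(\theta)^t,
\]
with $\alpha_\pm$ coming from the spectral decomposition of $A(\theta)$ and bounded uniformly in $\theta$ (away from the discriminant zeros, cf.\ below). Splitting the integral into a window of size $t^{-1/2+\varepsilon}$ about $\theta=0$, a matching window about $\theta=\pi$, and a complementary region, the Gaussian integrals $\int e^{-ct\theta^2}\,d\theta$ in each window contribute $O(1/\sqrt{t})$ while the complementary region contributes at most $\rho^t$. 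Taking the absolute value inside the integral and the supremum over~$i$, one obtains $\sup_i\prob{Y_t=i}=O(1/\sqrt{t})$, which together with Step~1 proves Proposition~\ref{prop:aux}.

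The main obstacle is the point $\theta=\pi$: since $A(\pi)$ has the eigenvalue $-1$ on the unit circle, a naive spectral-radius bound fails there. The key observation is that $\lambda_-$ approaches $-1$ quadratically in $\theta-\pi$, exactly mirroring the behaviour of $\lambda_+$ near~$0$, which produces a second Gaussian pocket of size $1/\sqrt{t}$ rather than a genuine obstruction. A secondary technical wrinkle is the possible eigenvalue collision $\lambda_+(\theta)=\lambda_-(\theta)$ at $\sin\theta=\pm p/(1-p)$ when $p<1/2$, where $A(\theta)$ is only Jordan-diagonalisable; this merely inserts a polynomial factor in $t$ on a neighbourhood of these two points, absorbed without damage into the exponential decay $\rho^t$ already valid there.
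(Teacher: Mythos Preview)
Your argument is correct and takes a genuinely different route from the paper. The paper proceeds probabilistically: it groups two time-steps of the chain $X$ into one to form a homogeneous Markov chain (its $Y_t$, which coincides with your $Y_{2t}$), and then decomposes that chain as a simple random walk stopped at an independent binomial time $N_t\sim\mathrm{Bin}(t,2p(1-p))$ plus an independent residual (Lemma~\ref{lem:eqloi}). A general local-limit lemma for such mixtures (Lemma~\ref{lem:MAnotsimple}), proved via Chebyshev on $N_t$ together with the classical $O(n^{-1/2})$ bound for the simple walk, then yields $\sup_i\prob{Y_t=i}=O(t^{-1/2})$. Your approach is analytic: you compute the $2\times 2$ transfer matrix for the parity-split characteristic function, identify the two points $\theta=0,\pi$ where an eigenvalue touches the unit circle, and extract a Gaussian pocket of size $t^{-1/2}$ at each. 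The paper's decomposition is elegant and its mixture lemma is a reusable tool; your Fourier computation is more direct and self-contained, though the treatment of the intermediate $\theta$-region would be cleaner if you bound $|\phi_t(\theta)|\le\|A(\theta)\|_2^t$ directly (one computes $\|A(\theta)\|_2^2=1-2p(1-p)(1-|\cos\theta|)$), which sidesteps the eigenvalue-collision and unbounded-$\alpha_\pm$ issues you flag. Note also that both arguments genuinely need $p\in(0,1)$: at $p=0$ the walk is deterministic and $C_8(t,t)=(-1)^t$ does not decay, so the endpoint $p=0$ in the stated range is in fact spurious.
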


To prove this proposition, we prove first two lemmas on asymptotic behavior of random walks.
\begin{lemma} \label{lem:MAsimple}
Let $S = (S_t : t \in \NN)$ be a simple random walk on $\ZZ$, i.e.
\begin{itemize}[topsep=0pt,itemsep=0pt,partopsep=0pt,parsep=0pt]
\item $S_0 = 0$ a.s.,
\item for any $t \geq 0$, 
\begin{displaymath}
\prob{S_{t+1} - S_t = 1} = \prob{S_{t+1} - S_t = -1} = 1/2.
\end{displaymath} 
\end{itemize}
Then, there exists a constant $c>0$ such that for any $t \in \NN$, for any $i \in \ZZ$,
\begin{displaymath}
\prob{S_t=i} \leq \frac{c}{\sqrt{t}}.
\end{displaymath}
\end{lemma}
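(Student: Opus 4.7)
The plan is to reduce the statement to the classical estimate on central binomial coefficients. First I would observe that $S_t = S_0 + \sum_{s=1}^t X_s$ where $X_s$ are i.i.d.\ with $\prob{X_s = +1} = \prob{X_s = -1} = 1/2$. By parity, $\prob{S_t = i} = 0$ whenever $t + i$ is odd or $|i| > t$, so the bound is trivial in those cases; in the remaining case, writing $k = (t+i)/2 \in \{0, 1, \ldots, t\}$, the event $\{S_t = i\}$ is exactly $\{k \text{ up-steps out of } t\}$, so one has the explicit formula
\begin{equation}
\prob{S_t = i} = \frac{1}{2^t} \binom{t}{k}.
\end{equation}

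Next I would note that for fixed $t$, the binomial coefficient $\binom{t}{k}$ is maximized at $k = \lfloor t/2 \rfloor$, so it suffices to control the central binomial coefficient. Stirling's formula (or a direct computation) gives the asymptotic
\begin{equation}
\frac{1}{2^t} \binom{t}{\lfloor t/2 \rfloor} \sim \sqrt{\frac{2}{\pi t}} \quad \text{as } t \to \infty,
\end{equation}
so in particular there exists a constant $c > 0$ such that $\frac{1}{2^t} \binom{t}{\lfloor t/2 \rfloor} \leq c / \sqrt{t}$ for every $t \geq 1$ (the case $t = 0$ can be absorbed by enlarging $c$, or excluded from the statement since the factor $1/\sqrt{t}$ is only useful for $t \geq 1$).

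Combining these two steps yields the claimed uniform bound $\prob{S_t = i} \leq c/\sqrt{t}$ for all $i \in \ZZ$ and $t \geq 1$. There is no real obstacle: the only care needed is to handle the parity constraint and the edge cases $t = 0$ or $|i| > t$ separately, and to cite or prove Stirling in the form sufficient to get a uniform constant. Alternatively, one could invoke the local central limit theorem directly, which gives the same estimate with an explicit constant $c = \sqrt{2/\pi}$ (up to $o(1)$), but the elementary Stirling route keeps the argument self-contained.
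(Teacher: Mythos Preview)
Your argument is correct and is precisely the route the paper indicates: it refers to the result as classical, cites Petrov for the general i.i.d.\ case, and remarks that in this simple case ``a proof can be obtained by enumeration of binary paths and application of Stirling's formula,'' which is exactly what you carry out. Nothing further is needed.
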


\begin{proof}
It is a classical result in probability theory. Proofs of generalization of this lemma exists for sum of i.i.d.\ random variables, see~\cite[Chapter~3]{Petrov75}. A proof in that simple case can be obtained by enumeration of binary paths and application of Stirling's formula.
\end{proof}

Second lemma generalizes Lemma~\ref{lem:MAsimple} to some processes constructed with a simple random walk.

\begin{lemma} \label{lem:MAnotsimple}
Let $X = (X_t : t \in \NN)$ a process with values in $\ZZ$. If, for any $t \in \NN$,
\begin{displaymath}
X_t \eqd S_{N_t} + R_{t,{N_t}}
\end{displaymath}
such that $N = (N_t : t \in \NN)$ where $N_t$ follows a binomial laws of parameters $(t,q)$ ($q \neq 0$), $S = (S_u : u \in \NN)$ is a simple random walk on $\ZZ$ and $R = \left( R_{t,u} : t \in \NN, u \in \NN \right)$ is any collection of any random variables on $\ZZ$ and $(N,S,R)$ are independent, then, there exists $c>0$ such that for any $i$, for any $t$,
\begin{displaymath}
\prob{X_t = i} \leq \frac{c}{\sqrt{t}}.
\end{displaymath}
\end{lemma}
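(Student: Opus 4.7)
The plan is to condition on $N_t$ (and on $R_{t,N_t}$) and then apply Lemma~\ref{lem:MAsimple} to the simple random walk factor, ending up with an expectation of the form $E[N_t^{-1/2}]$ (suitably truncated) that can be controlled by concentration of the binomial distribution around its mean $qt$.

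First, using the mutual independence of $N$, $S$ and $R$,
\begin{equation*}
\prob{X_t = i} = \sum_{n=0}^{t} \sum_{r \in \ZZ} \prob{N_t = n}\, \prob{R_{t,n} = r}\, \prob{S_n = i - r}.
\end{equation*}
For $n \geq 1$, Lemma~\ref{lem:MAsimple} gives $\prob{S_n = i-r} \leq c_0/\sqrt{n}$ uniformly in $r$, while for $n=0$ I use the trivial bound $\prob{S_0 = i-r} \leq 1$. Summing over $r$ (and using that $R_{t,n}$ is a probability distribution on $\ZZ$), I obtain
\begin{equation*}
\prob{X_t = i} \leq \prob{N_t = 0} + c_0 \sum_{n=1}^{t} \prob{N_t = n}\, \frac{1}{\sqrt{n}} = (1-q)^t + c_0\, E\!\left[ N_t^{-1/2} \mathbf{1}_{N_t \geq 1} \right].
\end{equation*}

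Next, I split the expectation according to whether $N_t$ is close to its mean. Since $N_t \sim \text{Bin}(t,q)$ with $q>0$, the Chernoff (or Hoeffding) bound yields $\prob{N_t \leq qt/2} \leq e^{-\gamma t}$ for some $\gamma = \gamma(q) > 0$. On the complementary event $N_t > qt/2$ I bound $N_t^{-1/2} \leq \sqrt{2/(qt)}$, giving
\begin{equation*}
E\!\left[ N_t^{-1/2} \mathbf{1}_{N_t \geq 1} \right] \leq e^{-\gamma t} + \sqrt{\frac{2}{q t}}.
\end{equation*}
Combining the two displays, the terms $(1-q)^t$ and $e^{-\gamma t}$ decay exponentially in $t$, hence are $O(1/\sqrt{t})$, and the dominant contribution is of order $1/\sqrt{t}$. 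Choosing $c$ large enough to absorb all constants uniformly in $t \geq 1$ (and adjusting for small $t$ by noting $\prob{X_t=i} \leq 1$), I conclude the desired bound.

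The only real subtlety is handling the small-$N_t$ regime, since $n=0$ would give a singular $1/\sqrt{n}$; this is why I isolate the term $\prob{N_t=0}$ separately and why I use concentration to avoid any nontrivial contribution from $1 \leq N_t \ll qt$. Everything else is routine: the independence of $(N,S,R)$ is exactly what allows the conditioning to decouple, and Lemma~\ref{lem:MAsimple} supplies the pointwise $1/\sqrt{n}$ bound that we then average over $N_t$.
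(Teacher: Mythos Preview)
Your proof is correct and follows essentially the same approach as the paper: condition on $N_t$, split according to whether $N_t$ is near its mean $qt$, use a concentration inequality for the bad event, and apply Lemma~\ref{lem:MAsimple} on the good event. The only cosmetic differences are that the paper uses Chebyshev's inequality (yielding a $O(1/t)$ bound on $\prob{|N_t-qt|>qt/2}$) rather than Chernoff/Hoeffding, and it phrases the good-event contribution via a $\max$ over $n\in[qt/2,3qt/2]$ rather than as the expectation $E[N_t^{-1/2}\mathbf{1}_{N_t\geq 1}]$; your handling of the $n=0$ case is slightly more careful than the paper's.
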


\begin{proof}
Let $t \in \prive{\NN}{\{0\}}$, $i \in \ZZ$ and $\epsilon >0$,
\begin{align*}
\prob{X_t = i} & = \prob{S_{N_t} + R_{t,{N_t}} = i} \\
& \leq \prob{|N_t-qt| > qt/2} + \max_{n \in [qt/2,3qt/2]} \prob{S_n + R_{t,n} = i} \\
& \leq \frac{q(1-q)t}{(q t/2)^2} + \max_{n \in [qt/2,3qt/2]} \prob{S_n = i - R_{t,n}} \text{\ (by Chebyshev's inequality)} \\
& \leq \frac{4(1-q)}{q t} + \max_{n \in [qt/2,3qt/2]} \frac{c}{\sqrt{n}} \text{\ (by Lemma~\ref{lem:MAsimple})}\\
& \leq \frac{c_1}{\sqrt{t}} + \frac{c \sqrt{2}}{\sqrt{q}} \frac{1}{\sqrt{t}} \\
& = \frac{c_2}{\sqrt{t}}.
\end{align*}
\end{proof}

Now, we define a homogeneous Markov chain $Y$ with values on $\ZZ$.
\begin{definition}
The process $Y = (Y_t:t \in \NN)$ have the following properties:
\begin{itemize}[topsep=0pt,itemsep=0pt,partopsep=0pt,parsep=0pt]
\item $Y_0 = 0$ a.s.;
\item for any $t \in \NN$,
\begin{displaymath}
Y_{t+1} = Y_t + 
\begin{cases}
-1 & \text{w.p. } p(1-p),\\
0 & \text{w.p. } p^2, \\
1 & \text{w.p. } p(1-p), \\
2 (-1)^{Y_t} & \text{w.p. } (1-p)^2.
\end{cases}
\end{displaymath}
\end{itemize}
\end{definition}

This Markov chain is related to the Markov chain $X$ defined in Definition~\ref{def:XMA}.

\begin{proposition} \label{prop:X2t}
Let $p \in [0,1)$ and $r=0$. In that case, the non-homogeneous Markov chain $X$ (defined in Definition~\ref{def:XMA}) satisfies, for any $t \in \NN$, 
\begin{equation}
X_{2t} \eqd (Y_t,1) \text{ and } X_{2t+1} \eqd \begin{cases} (1+Y_t,0) & \text{w.p. } 1-p, \\ (-Y_t ,0 ) & \text{w.p. } p. \end{cases}
\end{equation}

In particular, \eqref{eq:COT2} gives, for any $t$,
\begin{align}
C_8(i,2t) & = \prob{Y_t = 0} \text{ and }\\
C_8(i,2t+1) & = - \left(p\ \prob{Y_t = 0} + (1-p)\ \prob{Y_t = -1}\right).
\end{align}
\end{proposition}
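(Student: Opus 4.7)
The strategy is to compute the two-step transition kernel of $X$'s first coordinate separately at even and at odd starting times and to identify each with the one-step kernel of $Y$ up to a simple transformation. Two preliminary remarks simplify matters. When $r=0$, every step of $X$ deterministically flips the second coordinate, so starting from $X_0=(0,1)$ we have that the second coordinate of $X_{2t}$ equals $1$ and that of $X_{2t+1}$ equals $0$ for every $t$; it therefore suffices to track the first coordinate $\tilde X_t:=X_t[1]$, whose one-step law is $\tilde X_{t+1}=\tilde X_t$ w.p.\ $p$ and $\tilde X_{t+1}=\tilde X_t+(-1)^{\tilde X_t+t}$ w.p.\ $1-p$.

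For the even-time statement, compose two consecutive one-step transitions starting at time $2t$. The algebraic identity $(-1)^{\,i+(-1)^i}=-(-1)^i$ implies that a diagonal step immediately followed by another diagonal step produces a net displacement of $2(-1)^i$. Enumerating the four cases gives, for $\tilde X_{2t}=i$,
\begin{equation*}
\tilde X_{2t+2}-\tilde X_{2t}=\begin{cases}0 & \text{w.p. } p^2,\\ -1 & \text{w.p. } p(1-p),\\ +1 & \text{w.p. } p(1-p),\\ 2(-1)^i & \text{w.p. } (1-p)^2,\end{cases}
\end{equation*}
which matches the one-step kernel of $Y$ term by term. Since $\tilde X_0=Y_0=0$, an induction on $t$ gives $\tilde X_{2t}\eqd Y_t$, and hence $X_{2t}\eqd(Y_t,1)$.

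For the odd-time statement, the same two-step computation applied at a time $2t+1$ uses $(-1)^{\,i+(2t+1)}=-(-1)^i$ and therefore produces the kernel of a chain $\tilde Y$ identical to that of $Y$ except that the $(1-p)^2$ jump is $-2(-1)^i$ instead of $+2(-1)^i$. Thus $X_{2t+1}[1]\eqd\tilde Y_t$, where the initial value $\tilde Y_0=X_1[1]$ equals $0$ w.p.\ $p$ and $1$ w.p.\ $1-p$ by a one-step computation from $X_0=(0,1)$. One then identifies $\tilde Y$ with two reflected/shifted copies of $Y$ by change of variable: putting $V_t:=-\tilde Y_t$ transforms the kernel of $\tilde Y$ into that of $Y$ (the sign flip in the $\pm 1$ jumps is harmless, and $-2(-1)^i$ becomes $+2(-1)^{V_t}$ because $(-1)^{-V_t}=(-1)^{V_t}$); similarly $V_t:=\tilde Y_t-1$ works because $(-1)^{V_t+1}=-(-1)^{V_t}$. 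Hence $\tilde Y_t\mid\tilde Y_0=0\eqd -Y_t$ and $\tilde Y_t\mid\tilde Y_0=1\eqd 1+Y_t$, and mixing over the distribution of $\tilde Y_0$ yields exactly the formula of the proposition.

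The main obstacle is the odd-time identification: the chain $\tilde Y$ obtained from the two-step kernel at odd times is not literally $Y$, and one has to recognize that it can be realized in two natural ways as a transformation of $Y$ depending on its initial value; pinning down the two correct transformations ($-Y$ and $1+Y$) is precisely what produces the mixture stated in the proposition, and is what makes the resulting law directly amenable to the simple-random-walk bound of Lemma~\ref{lem:MAnotsimple}. The ``In particular'' formulas for $C_8(i,2t)$ and $C_8(i,2t+1)$ then follow from \eqref{eq:COT2} after noting that the second coordinate of $X_t$ is determined by the parity of $t$, so one of the two probabilities in \eqref{eq:COT2} vanishes in each case.
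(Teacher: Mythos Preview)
Your proof is correct and follows the same approach as the paper: compute the two-step transition of $X$ when $r=0$ and identify it with the one-step kernel of $Y$, treating the even and odd starting times separately. You actually supply more detail than the paper does for the odd-time identification (the paper simply asserts that the first coordinate of $X_{2t+1}$ is $-Y_t$ w.p.\ $p$ and $1+Y_t$ w.p.\ $1-p$ without spelling out the change-of-variable argument), so your version is if anything a slight improvement in exposition.
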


\begin{proof}
We suppose that $r=0$, we obtain, for any $t \in \ZZ$, applying twice \eqref{eq:XMA}, that: if $X_t = (i,k)$, then
\begin{displaymath}
X_{t+2} = 
\begin{cases}
(i-1,k) & \text{w.p. } p(1-p),\\
(i,k) & \text{w.p. } p^2,\\
(i+1,k) & \text{w.p. } p(1-p),\\
(i+2 (-1)^{i+t},k) & \text{w.p. } (1-p)^2.
\end{cases}
\end{displaymath}

As $X_0 = (0,1)$, we obtain that, for any $t \in \ZZ$, first coordinate of $X_{2t}$ is equal in distribution to $Y_t$ and its second coordinate is $1$ a.s.
And, as 
\begin{displaymath}
X_1 = \begin{cases} (0,0) & \text{w.p. } p, \\ (1,0) & \text{w.p. } 1-p, \end{cases}
\end{displaymath}
first coordinate of $X_{2t+1}$ is equal in distribution to $-Y_t$ w.p.\ $p$ and to $1+Y_t$ w.p.\ $1-p$ and its second coordinate is $0$ a.s.
\end{proof}

Now, we decompose $Y$ so that $Y$ satisfies conditions of Lemma~\ref{lem:MAnotsimple}. We define first a law on $\ZZ$.
\begin{definition} \label{def:RTN}
Let $p \in [0,1]$, $q \in [0,1]$, $t \in \NN$, $n \in \NN$. Let $(L_j : 0 \leq j \leq n)$ be $n+1$ i.i.d.\ random variables distributed according to geometric law of parameter $q$, i.e., for any $j \in \NN$, for any $k \in \NN$,
\begin{displaymath}
\prob{L_j = k} = (1-q)^k q.
\end{displaymath}
We denote by $\mathcal{L}_{t,n}(q)$ the law on $\NN^{n+1}$ of $(L_j : 0 \leq j \leq n)$ conditioned by $\sum_{j=0}^n L_j = t-n$. Let $(L_j : 0 \leq j \leq n) \sim \mathcal{L}_{t,n}(q)$. For any $j$, we set $G_j$ to be a random variable distributed according to binomial law of parameters $(L_j,p)$, and we suppose that $(G_j : 0 \leq j \leq n)$ knowing $(L_j: 0 \leq j \leq n)$ are independent. We define then by $\mathcal{R}_{t,n}(p,q)$ the law of
\begin{displaymath}
R_{t,n} = \sum_{j=0}^{n} (-1)^j G_j.
\end{displaymath}
\end{definition}

Now, we can check that $Y$ satisfies conditions of Lemma~\ref{lem:MAnotsimple}.
\begin{lemma} \label{lem:eqloi}
Process $Y$ satisfies, for any $t \in \NN$, $Y_t \eqd S_{N_t} + 2 R_{t,{N_t}}$ where
\begin{itemize}[topsep=0pt,itemsep=0pt,partopsep=0pt,parsep=0pt]
\item $N_t$ follows a binomial law of parameters $(t,2p(1-p))$,
\item $S = (S_u : u \in \NN)$ is a simple random walk on $\ZZ$,
\item $R = (R_{t,n})$ is a collection of (independent) random variables of mono-dimensional law: for any $t$, for any $n$, $R_{t,n}$ is distributed according to $\mathcal{R}_{t,n} \left( \frac{(1-p)^2}{p^2+(1-p)^2},2p(1-p) \right)$,
\item $N$, $S$ and $R$ are mutually independent.
\end{itemize} 
\end{lemma}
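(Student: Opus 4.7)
\medskip

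\textbf{Proof plan for Lemma~\ref{lem:eqloi}.}

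The plan is to decompose each increment of $Y$ into three mutually exclusive "types" and to identify $N_t$ with the count of increments of one specific type. Call the step $Y_t \to Y_{t+1}$ of \emph{type $A$} if $|Y_{t+1}-Y_t|=1$, of \emph{type $B$} if $Y_{t+1}=Y_t$, and of \emph{type $C$} if $|Y_{t+1}-Y_t|=2$. Reading off the transition kernel of $Y$ and noting that the four elementary outcomes are independent at each step, a type-$A$ step happens with probability $2p(1-p)$, a type-$B$ step with probability $p^2$, and a type-$C$ step with probability $(1-p)^2$, independently of everything else. Therefore the total number of type-$A$ steps during times $0,\dots,t-1$ is $N_t\sim \mathrm{Bin}(t,2p(1-p))$, as required.

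Next I would condition on the sequence of types. Given that a step is type $A$, it is equally likely to be $+1$ or $-1$; given that it is type $B$ it is $0$; given that it is type $C$ its sign is $(-1)^{Y_s}$ (not free). Thus the contribution to $Y_t$ of the type-$A$ steps, read in order of occurrence, is a simple random walk on $\ZZ$ evaluated at time $N_t$, i.e.\ $S_{N_t}$ with $S$ and $N_t$ independent (because signs and types are generated from independent randomness). Type-$B$ steps contribute $0$.

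The heart of the argument is the contribution of the type-$C$ steps. The key observation is that the parity of $Y_s$ equals the parity of the number of type-$A$ increments performed before time $s$, since types $B$ and $C$ keep parity invariant. Number the type-$A$ steps $1,\dots,N_t$ and split the $t-N_t$ non-type-$A$ steps into $N_t+1$ "gaps" $j=0,\dots,N_t$, where gap $j$ collects the non-type-$A$ steps lying strictly between the $j$-th and $(j{+}1)$-th type-$A$ steps (with the obvious convention at the ends). Within gap $j$, the parity of $Y_s$ is fixed equal to $j\bmod 2$, so every type-$C$ step in gap $j$ contributes exactly $2\,(-1)^j$. Writing $L_j$ for the length of gap $j$ and $G_j$ for the number of type-$C$ steps inside gap $j$, the total type-$C$ contribution is $2\sum_{j=0}^{N_t}(-1)^j G_j$.

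It remains to identify the joint law of $(L_0,\dots,L_{N_t})$ and of $(G_j)$ with the ingredients of Definition~\ref{def:RTN}. Conditionally on $N_t=n$, the set of positions of type-$A$ steps among $\{0,\dots,t-1\}$ is uniform on the $\binom{t}{n}$ subsets of size $n$; equivalently $(L_0,\dots,L_n)$ is uniform on $\{(\ell_0,\dots,\ell_n)\in\NN^{n+1}:\sum \ell_j=t-n\}$, which coincides exactly with the law of $n+1$ i.i.d.\ $\mathrm{Geom}(2p(1-p))$ random variables conditioned on their sum being $t-n$, i.e.\ $\mathcal{L}_{t,n}(2p(1-p))$. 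Conditionally on the types being non-$A$, each step is independently type $C$ with probability $p'=(1-p)^2/(p^2+(1-p)^2)$; hence, conditionally on $L_j$, we have $G_j\sim\mathrm{Bin}(L_j,p')$, and the $G_j$'s are independent. Comparing with Definition~\ref{def:RTN} gives $\sum_{j=0}^{N_t}(-1)^j G_j \eqd R_{t,N_t}$ with $R_{t,n}\sim \mathcal{R}_{t,n}(p',2p(1-p))$ and $N$, $S$, $R$ mutually independent. Combining the three contributions yields $Y_t\eqd S_{N_t}+2R_{t,N_t}$.

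The only step that requires any care is the identification of the conditional law of the gap lengths with $\mathcal{L}_{t,N_t}(2p(1-p))$; this is a standard fact about the positions of successes in a Bernoulli scheme, so the proof should be short once the decomposition into types $A$, $B$, $C$ is in place.
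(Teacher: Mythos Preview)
Your proposal is correct and follows essentially the same route as the paper: both arguments classify the increments of $Y$ according to whether $|\Delta(Y_u)|$ equals $1$, $0$ or $2$, observe that the parity of $Y$ (and hence the sign of the $\pm 2$ increments) is constant between consecutive $\pm 1$ increments, and then identify the gap lengths with conditioned geometrics and the $G_j$'s with binomials. Your write-up is in fact slightly more explicit than the paper's about why the gap-length law coincides with $\mathcal{L}_{t,n}(2p(1-p))$ and about the mutual independence of $N$, $S$ and $R$, but there is no substantive difference in strategy.
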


\begin{proof}
Let $t \in \NN$ and $Y_t$. For any $1 \leq i \leq t$, we denote $\Delta(Y_u) = Y_u-Y_{u-1}$. We remark that $\left(\Delta(Y_u) : 1 \leq u \leq t \right)$ is a sequence of independent random variables of law
\begin{displaymath}
\Delta(Y_i) = 
\begin{cases}
-1 & \text{w.p. } p(1-p), \\
0 & \text{w.p. } p^2, \\
1 & \text{w.p. } p(1-p), \\
-2 & \text{w.p. } (1-p)^2 \text{ if } Y_i \text{ if odd}, \\
2 & \text{w.p. } (1-p)^2 \text{ if } Y_i \text{ is even}.
\end{cases}
\end{displaymath}

As $Y_0 = 0$, we have $Y_t = \sum_{i=1}^t \Delta(Y_i)$.

First, we study the set  
\begin{displaymath}
E_1 = \{u : 1 \leq u \leq t \text{ and } | \Delta(Y_u) | = 1 \}.
\end{displaymath}
We denote by $(u_1,\dots,u_N)$ the elements of $E_1$ sorted in increasing order. $E_1$ is the set of instants for which $Y_{t_i-1}$ and $Y_{t_i}$ have different parities. Cardinal $N$ of $E_1$ follows a binomial law of parameters $(t,2p(1-p))$ (indeed, at each step of times, $\Delta(Y_i) = \pm 1$ with probability $2p(1-p)$). The random variable $S_N = \sum_{i=1}^N \Delta(Y_{t_i})$ is distributed as a simple random walk finishing at a random time $N$. 

We insist on the fact that, for any $0 \leq j \leq N-1$, every element of $(Y_i : u_{j} \leq i \leq u_{j+1}-1)$ (setting $u_0=0$) are of the same parity as $j$, in particular $|Y_{u_{j+1}-1} - Y_{u_j}|$ is even.
Let, for any $j \geq 0$, $L_j = u_{j+1}-1 - u_{j}$. By construction of $Y$, random variables $(L_j : 0 \leq j \leq N)$ follow the law $\mathcal{L}_{t,N}(2p(1-p))$ (see Definition~\ref{def:RTN}). 
We denote $G_j = |Y_{u_{j+1}-1} - Y_{u_j}|/2$. By construction of $Y$,
\begin{displaymath}
G_j = \sum_{i=1}^{L_j} X^{(j)}_i 
\end{displaymath}
where $(X^{(j)}_i : 1 \leq i \leq L_j , 0 \leq j \leq N)$ are i.i.d. of law: for any $i,j$,
\begin{displaymath}
\prob{X^{(j)}_i = 1} = 1 - \prob{X^{(j)}_i = 0} = \frac{(1-p)^2}{p^2+(1-p)^2}.
\end{displaymath}
In other words, $G_j$ follows a binomial law of parameters $\left(L_j, \frac{(1-p)^2}{p^2+(1-p)^2} \right)$.

Hence, we effectively obtain that
\begin{displaymath}
Y_t \eqd S_N +  2 \underbrace{\sum_{j=0}^N (-1)^j G_j}_{R_{t,N}}
\end{displaymath} 
where $N$ follows a binomial law of parameter $(t,2p(1-p))$.
\end{proof}

\begin{proof}[Proof of Proposition~\ref{prop:aux}]
Lemmas~\ref{lem:MAnotsimple},~\ref{lem:eqloi} and Proposition~\ref{prop:X2t} have for immediate consequence Proposition~\ref{prop:aux}.
\end{proof}

\subsection{General case: proof of Theorem~\ref{thm:asympt}} \label{sec:asymptg}
In this section, we prove only case $i=0$. Cases $i \neq 0$ could be proved in a similar way but with some sections more technical. We denote by $H$ the function defined by, for any $(p,r) \in [0,1]^2$ such that $p+r \neq 1$,
\begin{equation}
H(p,r) = \frac{(1-2p)(1-2r)}{(1-(p+r))^2}
\end{equation}
and we let, for any $n \geq 0$,
\begin{align}
M^{(0)}_{n}(X) & = \sum_{k=0}^{n} (-1)^{k} \binom{2n-1-k}{n-k} \binom{n}{k} X^k \text{ and }\\
M^{(1)}_{n}(X) & = \sum_{k=0}^{n} (-1)^{k} \binom{2n-k}{k,n-k,n-k} X^k.
\end{align}

With these notation, when $p+r \neq 1$, $C_8(0,t)$ (in Theorem~\ref{eq:Cit}) becomes, for any $t$,
\begin{itemize}[topsep=0pt,itemsep=0pt,partopsep=0pt,parsep=0pt]
\item if $t$ is even,
\begin{equation} \label{eq:C0ti}
C_8(0,t) = (1-(p+r))^t \ M^{(0)}_{t/2}\left( H(p,r) \right),
\end{equation}

\item if $t$ is odd,
\begin{equation} \label{eq:C0tp}
C_8(0,t) = (r-p) (1-(p+r))^{t-1} \ M^{(1)}_{(t-1)/2} \left( H(p,r) \right).
\end{equation}
\end{itemize}

Hence, the asymptotic of $C_8(0,t)$ as $t \to \infty$ is related to those, as $n \to \infty$, of sequences of polynomials $\left(M^{(0)}_n(X) : n \geq 0\right)$ and $\left(M^{(1)}_n(X): n \geq 0\right)$ when $X=H(p,r)$. To evaluate those asymptotic behaviors, we let the function $m$ that is, for any $K \leq 1$,
\begin{equation}
m(K) = \frac{1}{1+\sqrt{1-K}}.
\end{equation}

\begin{lemma} \label{lem:asymptPoly}
For any $K \leq 1$, when $n \to \infty$,
\begin{align}
M^{(0)}_{n}(K) & = O\left(\frac{m(K)^{-2n}}{\sqrt{2n}}\right) \text{ and }\\
M^{(1)}_n(K) & = O\left(\frac{m(K)^{-2n}}{\sqrt{2n}}\right). \\
\end{align}
\end{lemma}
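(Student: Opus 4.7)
The plan is to derive closed forms for the two generating functions
\[ A^{(j)}(w) := \sum_{n \geq 0} M^{(j)}_n(K)\, w^n, \quad j \in \{0, 1\}, \]
and then apply Flajolet--Sedgewick's singularity analysis. For the first step, I would use the identity $\binom{2n-k}{k,n-k,n-k} = \binom{n}{k}\binom{2n-k}{n}$ together with $\binom{2n-k}{n} = [x^n](1+x)^{2n-k}$; the binomial theorem then collapses the alternating sum defining $M^{(1)}_n(K)$ to the diagonal coefficient
\[ M^{(1)}_n(K) = [x^n]\bigl((1+x)^2 - K(1+x)\bigr)^n. \]
A standard Lagrange-type contour argument (write $M^{(1)}_n(K)w^n = \frac{1}{2\pi\mathrm{i}} \oint(wP(x)/x)^n\,dx/x$ with $P(x) = (1+x)^2 - K(1+x)$, sum the geometric series, and pick up the unique root of $x = wP(x)$ that stays close to $0$ for small $w$) yields
\[ A^{(1)}(w) = \frac{1}{\sqrt{(1+Kw)^2 - 4w}}. \]
The analogous manipulation applied to $M^{(0)}_n(K) = [x^n](1+x)^{n-1}(1+(1-K)x)^n$---or, more economically, the linear relation $M^{(0)}_n(K) = \tfrac{1}{2}(M^{(1)}_n(K) - K\,M^{(1)}_{n-1}(K))$ for $n \geq 1$, which one obtains by matching both sides of the resulting generating-function identity---gives
\[ A^{(0)}(w) = \tfrac{1}{2} + \frac{1 - Kw}{2\sqrt{(1+Kw)^2 - 4w}}. \]

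For the second step I would locate the dominant singularity. The radicand factors as $K^2(w - w_-)(w - w_+)$ with $w_\pm = (2-K \mp 2\sqrt{1-K})/K^2$, and rationalizing yields the clean identity
\[ w_- = \frac{1}{(1 + \sqrt{1-K})^2} = m(K)^2. \]
For $K < 1$ the two roots are distinct, so $A^{(0)}$ and $A^{(1)}$ each have an isolated square-root singularity at $w = m(K)^2$, extend analytically to a standard $\Delta$-domain around it, and admit local expansions $A^{(j)}(w) \sim C_j(K)(1 - w/m(K)^2)^{-1/2}$ for an explicit constant $C_j(K)$. The Flajolet--Sedgewick transfer theorem then yields
\[ M^{(j)}_n(K) = [w^n] A^{(j)}(w) \sim \frac{C_j(K)\, m(K)^{-2n}}{\sqrt{\pi n}} = O\!\left( \frac{m(K)^{-2n}}{\sqrt{n}} \right), \]
exactly as claimed.

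The main obstacle is Step~1, the derivation of the closed forms. A cleaner alternative is to recognize $1/\sqrt{1 - 2(2-K)w + K^2 w^2}$ as the classical Legendre generating function evaluated at $t = Kw$, $x = (2-K)/K$, which yields the identification $M^{(1)}_n(K) = K^n P_n((2-K)/K)$ and reduces the asymptotic to Laplace's formula $P_n(\cosh\theta) \sim e^{(n+1/2)\theta}/\sqrt{2\pi n \sinh\theta}$ for fixed $\theta > 0$. A third possibility is to observe that $A^{(1)}(w)$ and $1/\sqrt{(1+Kw)^2 - 4w}$ both satisfy the first-order linear ODE $2P(w) y'(w) + P'(w) y(w) = 0$ with $P(w) = (1+Kw)^2 - 4w$ and the common initial value $y(0) = 1$, so they must coincide. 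Finally, the boundary case $K = 1$ is genuinely degenerate: the two singularities coalesce and one checks that $M^{(1)}_n(1) = 1$, which is not $O(1/\sqrt{n})$. This causes no trouble in the proof of Theorem~\ref{thm:asympt}, because the prefactor $D = r - p$ multiplying $M^{(1)}_{(t-1)/2}(H)$ in \eqref{eq:C0tp} vanishes precisely when $H(p,r) = 1$, that is, when $p = r$.
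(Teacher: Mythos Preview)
Your approach is correct but genuinely different from the paper's. The paper does \emph{not} analyze the generating functions $A^{(j)}(w)$ at all. Instead it observes that for the special choice $p=1-m(K)$, $r=0$ one has $H(p,r)=K$ and $1-(p+r)=m(K)$, so that by~\eqref{eq:C0ti}--\eqref{eq:C0tp}
\[
C_8(0,2n)=m(K)^{2n}\,M^{(0)}_n(K),\qquad C_8(0,2n+1)=-m(K)^{2n}\,m(K)\,M^{(1)}_n(K),
\]
and then invokes the probabilistic bound $C_8(0,t)=O(t^{-1/2})$ of Proposition~\ref{prop:aux}, which was proved earlier via a random-walk decomposition (Lemmas~\ref{lem:MAnotsimple}--\ref{lem:eqloi}). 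Dividing by $m(K)^{2n}$ gives the lemma in one line.

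What each route buys: the paper's argument is a two-line deduction from probabilistic machinery already in place, but it is opaque about \emph{why} $m(K)$ appears---it enters only because the equation $H(1-m,0)=K$ happens to invert nicely. Your singularity analysis is self-contained, explains $m(K)^2$ transparently as the nearest root of the radicand $(1+Kw)^2-4w$, and recovers the sharp constant (and the Legendre-polynomial identification) as a bonus; on the other hand it requires carrying out the diagonal/Lagrange computation and checking the $\Delta$-analyticity hypotheses.

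Your observation about $K=1$ is well taken: the paper's own argument has the same blind spot, since at $p=r=0$ the prefactor $r-p$ in~\eqref{eq:C0tp} vanishes and the bound on $C_8(0,2n+1)$ carries no information about $M^{(1)}_n(1)$. As you note, this does not affect Theorem~\ref{thm:asympt} because $H(p,r)=1$ forces $p=r$ and hence $D=0$.
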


\begin{proof}
By Proposition~\ref{prop:aux} applied to $p=1-m(K)$ and $r=0$ and~\eqref{eq:C0ti}, for any $t=2n$ even,
\begin{displaymath}
m(K)^{2n} \ M^{(0)}_{n}\!\left(H(1-m(K),0)\right) = C_8(0,2n) = O\left(1/\sqrt{2n}\right).
\end{displaymath}
As $H(1-m(K),0) = K$, multiplying by $m(K)^{-2n}$,
\begin{displaymath}
M^{(0)}_{n}\!\left( K \right) = O\left(\frac{m(K)^{-2n}}{\sqrt{2n}}\right).
\end{displaymath}

We prove asymptotic of $M^{(1)}_n(K)$ as $n \to \infty$ with same arguments.
\end{proof}

\begin{proof}[Proof of Theorem~\ref{thm:asympt}]
Equations~\eqref{eq:C0ti} and~\eqref{eq:C0tp} and Lemma~\ref{lem:asymptPoly} imply that, for any $t$, 
\begin{equation}
C_8(0,t) = O\left( \frac{(1-(p+r))^t \  m(H(p,r))^{-t}}{\sqrt{t}} \right) = O\left( \frac{\lambda(p,r)^t}{\sqrt{t}} \right)
\end{equation}
where $\displaystyle \lambda(p,r) = \left| \frac{1-(p+r)}{m(H(p,r))}\right|$. Now, let us compute $\lambda(p,r)$.

First, we compute $m(H(p,r))$.
\begin{align*}
m(H(p,r)) & = \frac{1}{1+\sqrt{1- \frac{(1-2p)(1-2r)}{(1-(p+r))^2}}} \\
& = \frac{|1-(p+r)|}{|1-(p+r)|+|p-r|}
\end{align*}
This last quantity is $\displaystyle \frac{1-(p+r)}{1-2r}$ if ($1-(p+r) \geq 0$ and $p-r \geq 0$) or $(1-(p+r) \leq 0$ and $p-r \leq 0$) (i.e. if $(1-(p+r))(p-r) = p-p^2 - (r-r^2)  \geq 0$), and it is $\displaystyle \frac{1-(p+r)}{1-2p}$ else. Hence,
\begin{displaymath}
m(H(p,r)) =
\begin{cases}
\displaystyle \frac{1-(p+r)}{1-2p} & \text{if } p(1-p) \leq r(1-r), \\
\displaystyle \frac{1-(p+r)}{1-2r} & \text{if } r(1-r) \leq p(1-p).
\end{cases}
\end{displaymath}

And so,
\begin{displaymath}
\lambda(p,r) = 
\begin{cases}
|1-2p| & \text{if } p(1-p) \leq r(1-r), \\
|1-2r| & \text{if } r(1-r) \leq p(1-p).
\end{cases}
\end{displaymath}

As $p+(1-p)=1=r+(1-r)$, we can use classical results about areas of rectangles with same perimeter, to conclude that $\lambda(p,r) =  \max(|1-2p|,|1-2r|)$.
\end{proof}

\begin{remark}
The value of $m(K)$ has been originally obtained by studying the conic equation $\mathcal{H}_K = \{(p,r) : H(p,r) = K\}$ that is the union of two lines.
\end{remark}

\section{Vertex models and triangular probabilistic cellular automata} \label{sec:PCA}
In this section, we show that 8-vertex model on $\overline{K}_\infty$ when $a+c=b+d$ can be obtain as the space-time diagram of a new type of probabilistic cellular automata, called here triangular PCA. Using the theory of probabilistic cellular automata and the fact that some of them are ergodic~\cite{CM17}, we can prove Proposition~\ref{prop:ergo}.

\subsection{Triangular probabilistic cellular automata} \label{sec:PCADT} 
We define first probabilistic cellular automata (PCA) of order $2$ whose triangular PCA (TPCA) are special cases.
A PCA $\PCA{A}$ of order $2$ is a quintuple $(E,\LL,N_1,N_2,T)$ where:
\begin{itemize}[topsep=0pt,itemsep=0pt,partopsep=0pt,parsep=0pt]
\item $E$ is a finite set;
\item $\LL$ is a lattice;
\item $N_1$ is a neighborhood function of $\LL$, i.e.\ there exists a finite subset $I_1$ of $\LL$ such that, for any $i \in \LL$, $N_1(i) = (i+j: j \in I_1)$, we denote $|N_1|$ the cardinal of $I_1$;
\item $N_2$ is another neighborhood function of $\LL$ and 
\item $T$ is a transition matrix (t.m.) from $E^{|N_2|} \times E^{|N_1|}$ to $E$, i.e.\ for any $(x,y) \in E^{|N_2|} \times E^{|N_1|}$, for any $z \in E$, $T(x,y;z) \geq 0$, and $\sum_{z' \in E} T(x,y;z') = 1$.
\end{itemize}
From this quintuple, we define a Markov chain $(S_t:t \geq 0)$ of order $2$ on $E^\LL$ in the following way: for any subset $C \subset \LL$, for any $(z_i : i \in C) \in E^C$,
\begin{align}
& \prob{\ \left(S_{t+2}(i) = z_i : i \in C \right)~|~S_t=(x_i:i\in \LL) , S_{t+1}=(y_i:i\in \LL)\ } \nonumber \\
& \qquad = \prod_{i \in C} T( (x_j:j \in N_2(i)),(y_{j'}: j'\in N_1(i)) ; z_i).
\end{align}
The process $S_{t+2}$ is well defined because its law is defined on a compatible way on all cylinders of $E^\LL$. $(S_t(i) : i \in \LL, t \geq 0)$ is called \emph{space-time diagram} of $\PCA{A}$.\par

An other way to see PCA of order $2$ is to consider them as a deterministic map from $\mes{E^{\LL} \times E^{\LL}}$ (the set of probability measure on $E^{\LL} \times E^{\LL}$) to $\mes{E^{\LL} \times E^{\LL}}$. Let $\PCA{A}$ be a PCA $(E,\LL,N_1,N_2,T)$. Let $\mu \in \mes{E^{\LL} \times E^{\LL}}$ and $(S_{t_0},S_{t_0+1}) \sim \mu$. We denote by $\nu$ the law of $(S_{t_0+1},S_{t_0+2})$ where $S_{t_0+2}$ is the image of $(S_{t_0},S_{t_0+1})$ by $\PCA{A}$. Then, for any subset $C \in \LL$ and any $(y,z) \in E^\LL \times E^\LL$,
\begin{align*}
& \nu((y_i: i \in N_1(C)),(z_i:i \in C)) \nonumber \\
& = \sum_{(x_i:i \in N_2(C)) \in E^{N_2(C)}} \mu((x_i:i\in N_2(C)),(y_i \in N_1(C))) \prod_{i \in C} T((x_j:j \in N_2(i)),(y_j:j \in N_1(i));z_i)
\end{align*}
where $N_k(C) = \cup_{i \in C} N_k(i)$ for any $k \in \{1,2\}$. We denote by $\tr{A}$ the function that maps $\mu$ to $\nu = \tr{A}(\mu)$. We say that $\mu$ is an \emph{invariant probability measure} (i.p.m.) of $\PCA{A}$ if $\mu=\tr{A}(\mu)$.\par

In the following, we consider only the cases where $\LL = \ZZ$, $N_1(i) = (i,i+1)$ and $N_2(i) = (i+1)$. Such PCA of order $2$ are called, in this article, triangular probabilistic cellular automata (TPCA). The name comes from the fact that their space-time diagrams are triangular lattices (see Figure~\ref{fig:ACPVet}). To simplify reading, transitions $T((x_{i+1}),(y_i,y_{i+1});z_i)$ of TPCA are denoted now $T(y_i,x_i,y_{i+1};z_i)$.\par

\begin{figure}
\begin{center}
\begin{tabular}{cc}
\includegraphics{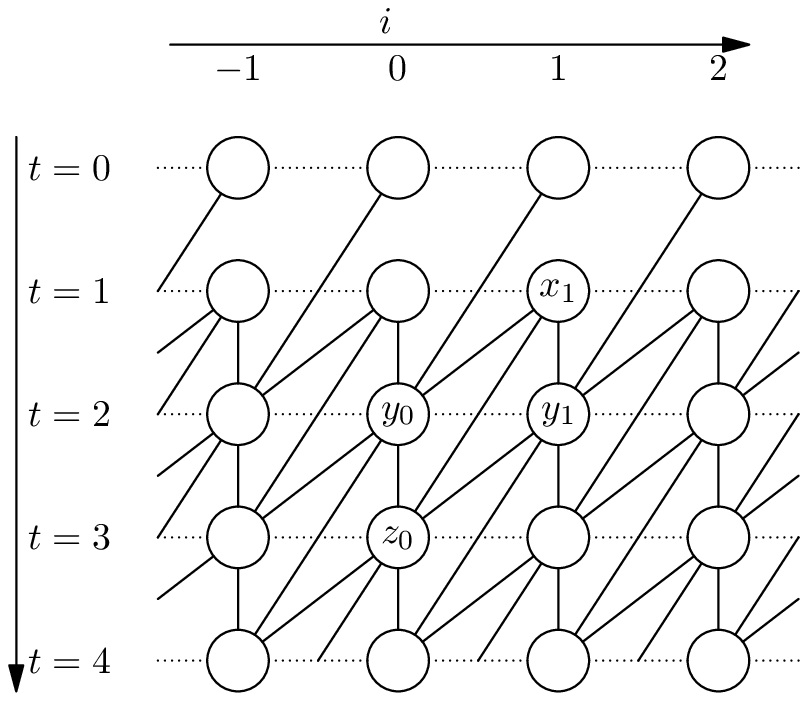} & \includegraphics{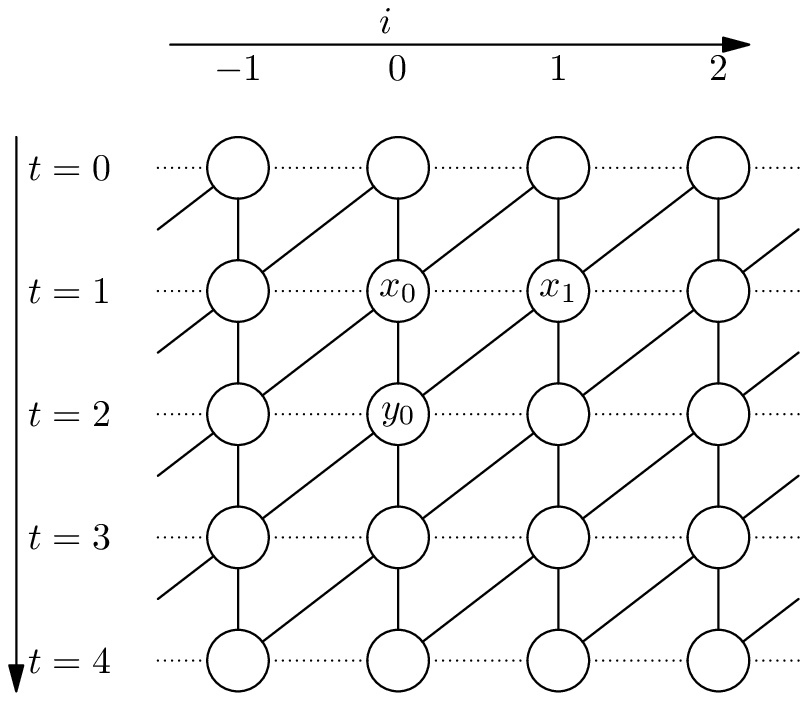}
\end{tabular}
\end{center}
\caption{Left: (empty) space-time diagram of a TPCA. Right: (empty) space-time diagram of a SPCA.}
\label{fig:ACPVet}
\end{figure}

\medskip

Before seeing new results on TPCA, we recall some theorems about ``classical'' PCA. The ``classical'' PCA, considered here, are PCA of order $2$ for which $N_1(i)= (i,i+1)$ and $N_2(i) = \emptyset$. We will call them \emph{square PCA} (SPCA) in the following because their space-time diagram are homeomorphic to $\ZZ^2$ (see Figure~\ref{fig:ACPVet}).\par

Cellular automata and ``classical'' PCA have been studied since 1940s. For more information on PCA, we refer the interested reader to the recent survey of Mairesse and Marcovici~\cite{MM14}. In the present work, we focus our attention on results on SPCA whose one of its invariant probability measures is a Markovian distribution~\cite{BGM69,Vasilyev78,TVSMKP90,DPLR02,B-M98,MM14IHP,CM15,Casse16}. In particular, we need to recall Theorem~2.6 of~\cite{CM15} that characterizes SPCA whose one of its invariant probability measures is a $(D,U)$-HZMC (Horizontal Zigzag Markov Chain).\par

A law $\mu$ on $E^\ZZ \times E^\ZZ$ is a $(D,U)$-HZMC distribution if there exists a pair $(D,U)$ of stochastic matrices from $E$ to $E$ and a family $(\rho_i: i \in \ZZ)$ of probability measures on $E$ such that, for any $k_1, k_2 \in \ZZ$, $k_1 < k_2$, for any $(x_i : k_1 \leq i \leq k_2)$, $(y_i : k_1 \leq i \leq k_2 - 1)$,
\begin{equation}
\mu((x_i: k_1 \leq i \leq k_2),(y_i:k_1\leq i \leq k_2 -1)) = \rho_{k_1}(x_{k_1}) \prod_{i=k_1}^{k_2-1} D(x_j;y_j) U(y_j;x_{j+1}) 
\end{equation}
and, for any $i \in \ZZ$ and $x_{i+1} \in E$,
\begin{equation}
\rho_{i+1}(x_{i+1}) = \sum_{x_i \in E} \rho_i(x_i) \sum_{y_i \in E} D(x_i;y_i) U(y_i;x_{i+1}). 
\end{equation}
In other words, a HZMC distribution is a Markovian distribution on states of two consecutive lines crossed from bottom to top and left to right (see Figure~\ref{fig:HZMC}). In the following, we denote $(x,y) \sim \HZ{(D,U)}$ if $(x,y)$ is distributed according to a $(D,U)$-HZMC distribution.\par

\begin{figure}
\begin{center}
\begin{tabular}{cc}
\includegraphics{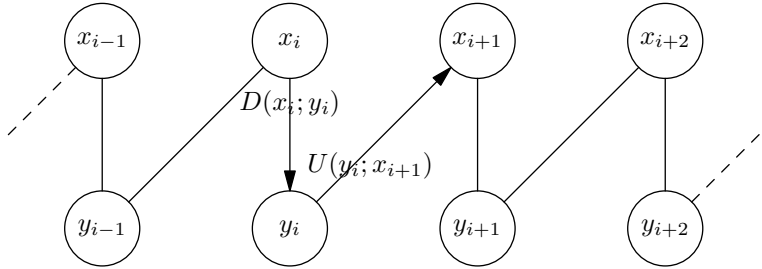}
\end{tabular}
\end{center}
\caption{Representation of a horizontal zigzag Markov chain (HZMC)}
\label{fig:HZMC}
\end{figure}

Now, we define some quantities needed to state Theorem~2.6 of~\cite{CM15}. Let $T$ be any stochastic Markov kernel from $E^2$ to $E$ with positive coefficients. 
Let $\nu = (\nu(x) : x \in E)$ be the stochastic (i.e.\ normalized such that $\sum_{x \in E} \nu(x) = 1$) left eigenvector associated to the eigenvalue $1$ of the following stochastic matrix 
\begin{displaymath}
\left(T(x,x;y) : x \in E,y \in E \right)
\end{displaymath}
(this eigenvector is unique due to the Perron-Frobenius Theorem) and $\gamma$ be the stochastic left eigenvector of the matrix
\begin{displaymath}
\left(\nu(y) \frac{T(y,y;0)}{T(y,x;0)} : x \in E, y\in E \right)
\end{displaymath}
associated with $\lambda$, its maximal eigenvalue. In this case, $\gamma$ is solution of
\begin{equation} \label{eq:gamma}
\sum_{x \in E} \frac{\gamma(x)}{T(y,x;0)} = \lambda \frac{\gamma(y)}{T(y,y;0) \nu(y)}.
\end{equation}
Define further for any $\eta=(\eta(x) : x \in E) \in \mes{E}$ with full support, the transition matrices $D^{\eta}$ and $U^{\eta}$ from $E$ to $E$:
\begin{equation}\label{eq:DUeta}
D^{\eta}(x;y)=\frac{\displaystyle \sum_{x' \in E} \eta(x') \frac{T(x,x';y)}{T(x,x';0)}}{\displaystyle  \sum_{x'' \in E}\frac{\eta(x'')}{T(x,x'';0)}} \text{ and } 
U^{\eta}(y;x')=\frac{\displaystyle \eta(x') \frac{T(0,x';y)}{T(0,x';0)}}{\displaystyle  \sum_{x'' \in E} \eta(x'') \frac{T(0,x'';y)}{T(0,x'';0)}}
\end{equation}

\begin{theorem}[Theorem~2.6 of~\cite{CM15}] \label{thm:CM15}
Let $\PCA{A}$ be a SPCA with finite alphabet $E = \{0,\dots,\kappa\}$ and transition matrix $T$ such that, for any $x_0,x_1,y_0 \in E$, $T(x_0,x_1;y_0) > 0$. One of the invariant probability measures of $\PCA{A}$ is a HZMC distribution iff $T$ satisfies the two following conditions:
\begin{cond} \label{cond:CM15-1}
for any $x,x',y \in E$, 
\begin{displaymath}
T(x,x';y) T(x,0;0) T(0,x';0) T(0,0;y) = T(0,0;0) T(x,x';0) T(0,x';y) T(x,0;y)
\end{displaymath}
\end{cond} 
\begin{cond} \label{cond:CM15-2}
the equality $D^ \gamma U^\gamma=U^\gamma D^\gamma$ holds (for $\gamma$ as defined in~\eqref{eq:gamma} and $(D^\gamma,U^\gamma)$ in~\eqref{eq:DUeta}).
\end{cond}

In this case, $(D^\gamma,U^\gamma)$-HZMC distribution is invariant by $\PCA{A}$.
\end{theorem}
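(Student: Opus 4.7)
The plan is to prove the two implications separately, treating Cond~\ref{cond:CM15-1} as a ``factorizability'' identity on $T$ and Cond~\ref{cond:CM15-2} as a ``consistency'' identity for the pair $(D^\gamma,U^\gamma)$ constructed in \eqref{eq:DUeta}.

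For the necessity direction, I would assume that some $(D,U)$-HZMC $\mu$ is invariant under $\tr{A}$, and first normalize $D$ and $U$ to a canonical form. Write the invariance equation on a one-cell cylinder: if $(x,y)\sim \mu$, then the joint law of $(S_{t+1}(i-1),S_{t+1}(i),S_{t+2}(i-1))$ can be expressed both via the HZMC structure (three successive zigzag steps with kernels $U,D$ and another application of $D$) and via the PCA dynamics (integration of $T$ against the HZMC marginal on the four underlying variables $S_t(i-1),S_t(i),S_t(i+1),S_{t+1}(i)$). Equating the two expressions and specializing $x,x'$ or the image $y$ to $0$ yields enough elementary ratios of $T$-entries to derive Cond~\ref{cond:CM15-1} by direct algebraic manipulation (taking the product of two such identities and dividing by their crossed counterparts kills all references to $D$ and $U$). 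Once Cond~\ref{cond:CM15-1} is in hand, I would show that $D$ and $U$ must equal $D^\gamma$ and $U^\gamma$ for a specific $\gamma$: the stochastic marginal on each line must be the stationary measure of $D U$, and the formulas in~\eqref{eq:DUeta} are forced up to a constant that pins down $\gamma$ via the eigenvector equation~\eqref{eq:gamma} (with $\lambda=1$ coming from stochasticity and Perron--Frobenius). Finally, the fact that $\mu$ is genuinely space-stationary forces $D^\gamma U^\gamma = U^\gamma D^\gamma$, i.e.\ Cond~\ref{cond:CM15-2}.

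For the sufficiency direction, I would assume Cond~\ref{cond:CM15-1} and Cond~\ref{cond:CM15-2}, build $\gamma$ via~\eqref{eq:gamma} (Perron--Frobenius gives existence and positivity thanks to the assumed positivity of $T$), then $D^\gamma,U^\gamma$ via~\eqref{eq:DUeta}. Define $\mu$ to be the $(D^\gamma,U^\gamma)$-HZMC whose line-marginal is the unique $D^\gamma U^\gamma$-invariant probability measure on $E$ (well-defined by Cond~\ref{cond:CM15-2}, which allows us to identify the marginals on the ``up'' and ``down'' rows). To check $\mu = \tr{A}(\mu)$, compute the image law of a generic cylinder and use Cond~\ref{cond:CM15-1} in the form
\[
T(x,x';y) = \frac{T(x,0;y)\,T(0,x';y)\,T(0,0;0)}{T(x,0;0)\,T(0,x';0)\,T(0,0;y)},
\]
which allows each local factor $T(y_i,y_{i+1};z_i)$ to split into a product of a ``left'' piece depending on $(y_i,z_i)$ and a ``right'' piece depending on $(y_{i+1},z_i)$. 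Summing against the HZMC weights rearranges into exactly the $(D^\gamma,U^\gamma)$-HZMC distribution on $(S_{t+1},S_{t+2})$, with Cond~\ref{cond:CM15-2} guaranteeing that the marginal has indeed propagated correctly.

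The main obstacle will be the sufficiency computation: the bookkeeping required to match products of $T$-entries against products of $D^\gamma$- and $U^\gamma$-entries is delicate, and one must check that the $\gamma$-dependent normalizations built into~\eqref{eq:DUeta} conspire with the factorization provided by Cond~\ref{cond:CM15-1} to leave no residual factor. The eigenvalue equation~\eqref{eq:gamma} is precisely what makes these residual normalizations collapse, so the core of the argument is linking the three relations Cond~\ref{cond:CM15-1}, \eqref{eq:gamma} and \eqref{eq:DUeta} in a single algebraic identity.
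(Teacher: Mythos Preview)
This theorem is not proved in the present paper: it is quoted verbatim as Theorem~2.6 of \cite{CM15} and used as a black box (see the sentence ``In particular, we need to recall Theorem~2.6 of~\cite{CM15}\dots'' just before the statement). There is therefore no ``paper's own proof'' to compare your proposal against; the actual proof lives in \cite{CM15}.

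That said, a few remarks on your outline. The overall architecture --- derive Cond~\ref{cond:CM15-1} from invariance by eliminating $D,U$ through well-chosen ratios, then reconstruct $(D,U)$ as $(D^\gamma,U^\gamma)$ and read off Cond~\ref{cond:CM15-2} --- matches the strategy of \cite{CM15}, and in fact the present paper records the key intermediate step as Proposition~\ref{prop:CM15}: invariance of a $(D,U)$-HZMC is equivalent to the pair of conditions $T(x,x';y)=D(x;y)U(y;x')/(DU)(x;x')$ and $DU=UD$. Your necessity argument is essentially a rediscovery of this proposition followed by the identification $D=D^\gamma$, $U=U^\gamma$. One inaccuracy: you write ``with $\lambda=1$ coming from stochasticity'', but the matrix in~\eqref{eq:gamma} is not stochastic, and $\lambda$ is its Perron eigenvalue, not $1$; the normalization of $\gamma$ absorbs $\lambda$ rather than forcing it to be $1$. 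This does not affect the logic, but your description of how $\gamma$ is pinned down needs adjustment. For sufficiency, your plan to use Cond~\ref{cond:CM15-1} to split $T(x,x';y)$ into left/right factors and then telescope against the HZMC weights is the right mechanism; the delicate point you correctly flag is that the $\gamma$-normalizations in~\eqref{eq:DUeta} must cancel, and this is exactly where~\eqref{eq:gamma} is used.
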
 

We can present, now, two new theorems on PCA that characterize TPCA whose one of its invariant probability measures is a $(D,U)$-HZMC. We establish these characterizations in two particular cases. First case is when $D=U$:
\begin{theorem} \label{thm:3col}
Let $\PCA{A}$ be a TPCA on $E$ a finite alphabet of transition matrix $T = (T(y,x,y';z) : y,x,y',z  \in E)$ with positive rate (i.e. $T(y,x,y';z) >  0$ for any $y,x,y',z \in E$). For any $y,y' \in E$, we denote $\left( \tilde{T}(y,y';x) : x \in E \right)$ the unique left stochastic eigenvector (associated to eigenvalue $1$) of the stochastic matrix $\left(T(y,x,y';z) : x,z \in E\right)$. One of the invariant probability measures of $\PCA{A}$ is a $(D,D)$-HZMC distribution iff the SPCA $\PCA{\tilde{A}}$ on $E$ with transition matrix $\tilde{T} = (\tilde{T}(y,y';z) : y,y',z \in E)$ satisfies \C~\ref{cond:CM15-1} and \C~\ref{cond:CM15-2} of Theorem~\ref{thm:CM15} with $D^\gamma = U^\gamma$. In this case, $(D^\gamma,D^\gamma)$-HZMC distribution is invariant by $\PCA{A}$.
\end{theorem}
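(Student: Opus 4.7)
The plan is to exploit the conditional-independence structure hidden in an HZMC in order to reduce the triangular dynamics to those of the auxiliary square PCA $\PCA{\tilde A}$, and then invoke Theorem~\ref{thm:CM15}. Think of $\mu = \HZ{(D,D)}$ as a law on $(S_t, S_{t+1})$, with $S_t$ as the lower line and $S_{t+1}$ as the upper one. A direct inspection of the product $\rho_{k_1}(x_{k_1}) \prod_j D(x_j;y_j) D(y_j;x_{j+1})$ that defines the HZMC shows that, conditionally on $S_{t+1} = y = (y_i)_i$, the bottom entries $x_{i+1} = S_t(i+1)$ are mutually independent with common law
$$
\eta_{y_i, y_{i+1}}(x) \;=\; \frac{D(y_i;x)\,D(x;y_{i+1})}{D^2(y_i;y_{i+1})},
$$
because each internal $x_{j+1}$-factor only couples the two adjacent $y$'s.

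Applying the TPCA transition $z_i = S_{t+2}(i) \sim T(y_i, x_{i+1}, y_{i+1}; \cdot)$, this independence propagates: given $y$, the $z_i$'s are independent with conditional law $Q(y_i,y_{i+1}; z) := \sum_x \eta_{y_i,y_{i+1}}(x)\, T(y_i,x,y_{i+1}; z)$. Hence $(S_{t+1}, S_{t+2}) \sim \HZ{(D,D)}$, now with $S_{t+1}$ as the lower line, if and only if for all $y, y', z$
\begin{equation}\label{eq:planTPCA}
\sum_x D(y;x)\, D(x;y')\, T(y,x,y';z) \;=\; D(y;z)\, D(z;y'),
\end{equation}
since the right-hand side, normalized by $D^2(y;y')$, is precisely the conditional law of the upper entry given the two adjacent lower entries in an $\HZ{(D,D)}$; the consistency of the single-line marginals is automatic from the recursion $\rho_{i+1} = \rho_i\cdot D^2$ built into the HZMC definition.

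Dividing \eqref{eq:planTPCA} by $D^2(y;y')$ rewrites it as the statement that $\eta_{y,y'}$ is a stationary measure of the Markov chain on $E$ with kernel $T(y,\cdot,y'; \cdot)$. Since $T$ has positive entries, this chain is irreducible, Perron--Frobenius yields uniqueness of its stationary measure, and by the very definition of $\tilde T$ in the theorem this measure equals $\tilde T(y,y';\cdot)$. Consequently \eqref{eq:planTPCA} is equivalent to
\begin{equation}\label{eq:planSPCA}
\tilde T(y,y'; z) \;=\; \frac{D(y;z)\,D(z;y')}{D^2(y;y')} \quad\text{for all }y,y',z,
\end{equation}
which is precisely the condition that $\PCA{\tilde A}$, a genuine SPCA, admits $\HZ{(D,D)}$ as invariant (for the square kernel the analogue of Steps~1--2 is trivial, because $S_{t+2}(i)$ is already drawn from $\tilde T(y_i,y_{i+1};\cdot)$ with no intermediate $x$ to integrate out). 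As $\tilde T$ is itself positive, Theorem~\ref{thm:CM15} applies to $\PCA{\tilde A}$ and turns \eqref{eq:planSPCA} into the conjunction of \C~\ref{cond:CM15-1}, \C~\ref{cond:CM15-2} and the matching $D = D^\gamma = U^\gamma$, which is the desired equivalence. The main subtlety I foresee is purely bookkeeping: keeping the alternating roles of the "upper" and "lower" lines consistent when passing from $(S_t,S_{t+1})$ to $(S_{t+1},S_{t+2})$, and tracking the $D^2(y;y')$ normalizations that appear and disappear throughout.
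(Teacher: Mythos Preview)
Your proposal is correct and follows essentially the same route as the paper. The paper first isolates your equation~\eqref{eq:planTPCA} as a general lemma (its Lemma~5.10, ``\C~6''), then uses Perron--Frobenius exactly as you do to identify the normalized vector $\bigl(D(y;x)D(x;y')\bigr)_x$ with $\tilde T(y,y';\cdot)$, arriving at your \eqref{eq:planSPCA}, and concludes via Theorem~\ref{thm:CM15}; your derivation of \eqref{eq:planTPCA} directly from the conditional-independence structure of the HZMC is simply an inline version of that lemma's proof.
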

The second case is when $E$ is of size $2$:
\begin{theorem} \label{thm:2col}
Let $\PCA{A}$ be a TPCA on $E = \{0,1\}$ of transition matrix $T = (T(y,x,y';z) : y,x,y',z  \in E)$ with positive rate. For any $y,y' \in E$, we denote $\left(\tilde{T}(y,y';x) : x \in E \right)$ the left eigenvector (associated to eigenvalue~$1$) of $\left( \sum_{u}  T(y',x,y;u) T(y,u,y';z) : x,z \in E\right)$. One of the invariant probability measures of $\PCA{A}$ is a $(D,U)$-HZMC distribution iff $\tilde{T}$ satisfies \C~\ref{cond:CM15-1} and 
\begin{cond} \label{cond:2col}
for the pair $(D^\gamma,U^\gamma)$ founded by application of Theorem~\ref{thm:CM15} to SPCA $\PCA{\tilde{A}}$ on E with transition matrix  $\tilde{T}$, we have, for any $y,y',z \in \{0,1\}$,
\begin{displaymath}
D^\gamma(y;z) U^\gamma(z;y') = \sum_{x \in \{0,1\}} U^\gamma(y;x) D^\gamma(x;y') T(y,x,y';z).
\end{displaymath}
\end{cond}
In this case, $(D^\gamma,U^\gamma)$-HZMC distribution is an invariant probability measure of $\PCA{A}$.
\end{theorem}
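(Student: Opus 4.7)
The plan is to reduce the problem to Theorem~\ref{thm:CM15} by ``collapsing'' two successive steps of the TPCA into an effective SPCA, and then to capture the remaining one-step compatibility by the local identity Cond~\ref{cond:2col}. The bridge is the matrix $M_{y,y'}(x;z)=\sum_{u} T(y',x,y;u)\,T(y,u,y';z)$, which describes the two-step action at a given column when the intermediate row alternates between $y$ and $y'$. Since $T$ is strictly positive, $M_{y,y'}$ is an irreducible stochastic matrix on $\{0,1\}$ and admits a unique stationary vector $\tilde T(y,y';\cdot)$, turning $\tilde T$ into the transition matrix of a well-defined SPCA $\PCA{\tilde A}$.

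\textbf{Necessity.} Assume $\mu$ is a $(D,U)$-HZMC i.p.m.\ of $\PCA{A}$. Then both $(S_t,S_{t+1})$ and $(S_{t+1},S_{t+2})$ are HZMC with the same pair $(D,U)$. Writing the law of the second pair in terms of the first and the TPCA transition, then summing column by column over the hidden variable $x=S_{t}(j+1)$, yields the local identity
\[
\sum_{x}U(y;x)\,D(x;y')\,T(y,x,y';z)=D(y;z)\,U(z;y'),
\]
which is exactly Cond~\ref{cond:2col} with $(D,U)=(D^\gamma,U^\gamma)$. Moreover, pushing $\mu$ forward on pairs of even rows $(S_t,S_{t+2})$ produces a distribution invariant under $\PCA{\tilde A}$ (by construction of $\tilde T$ as the two-step collapse); this is a HZMC i.p.m.\ for $\PCA{\tilde A}$, so Theorem~\ref{thm:CM15} forces Cond~\ref{cond:CM15-1} for $\tilde T$ and identifies the pair $(D^\gamma,U^\gamma)$ of~\eqref{eq:DUeta}.

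\textbf{Sufficiency.} Conversely, assume Cond~\ref{cond:CM15-1} holds for $\tilde T$ and Cond~\ref{cond:2col} holds for the pair $(D^\gamma,U^\gamma)$ defined by~\eqref{eq:DUeta}. Summing Cond~\ref{cond:2col} over $z\in\{0,1\}$ and using $\sum_{z}T(y,x,y';z)=1$ gives $(D^\gamma U^\gamma)(y,y')=(U^\gamma D^\gamma)(y,y')$, so the commutation Cond~\ref{cond:CM15-2} for $\tilde T$ is automatic. Theorem~\ref{thm:CM15} therefore provides a $(D^\gamma,U^\gamma)$-HZMC i.p.m.\ for $\PCA{\tilde A}$; lifted to two consecutive rows of $\PCA{A}$, this distribution is invariant under $\PCA{A}^{2}$. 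Reading Cond~\ref{cond:2col} as a per-column identity, one then checks that it is precisely what is required for the HZMC to propagate under a single step of $\PCA{A}$, which upgrades $\PCA{A}^{2}$-invariance to $\PCA{A}$-invariance.

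\textbf{Main obstacle.} The most delicate point is the last step of the sufficiency direction: showing that the local compatibility Cond~\ref{cond:2col} is strong enough to lift invariance from $\PCA{A}^{2}$ to $\PCA{A}$. This relies crucially on $|E|=2$, since only then does one-step HZMC-propagation reduce to the four scalar identities (indexed by $(y,y',z)$) encoded by Cond~\ref{cond:2col}. The same $|E|=2$ hypothesis also underlies the summation trick that makes Cond~\ref{cond:CM15-2} automatic, via the uniqueness of the stationary $\tilde T(y,y';\cdot)$ guaranteed by Perron--Frobenius on the $2\times 2$ matrix $M_{y,y'}$.
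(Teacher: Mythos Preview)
Your proposal has a genuine gap in the necessity direction, and it also misidentifies where the real difficulty lies.

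For necessity, you correctly obtain the local identity $\sum_x U(y;x)D(x;y')T(y,x,y';z)=D(y;z)U(z;y')$ (this is Lemma~\ref{lem:inv}). The problem is your next step: you claim that ``pushing $\mu$ forward on pairs of even rows $(S_t,S_{t+2})$ produces a distribution invariant under $\PCA{\tilde A}$ (by construction of $\tilde T$ as the two-step collapse)''. This is neither proved nor true as stated. The matrix $M_{y,y'}(x;z)=\sum_u T(y',x,y;u)T(y,u,y';z)$ is \emph{not} the two-step transition of the TPCA at a fixed column (note the swap of $y$ and $y'$ in the first factor), and $\tilde T(y,y';\cdot)$ is the \emph{stationary} vector of $M_{y,y'}$, not a kernel obtained by collapsing two rows. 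The paper argues quite differently: dividing Cond~\ref{cond:inv} by $(DU)(y;y')$ yields $T^S(y,y';z)=\sum_x T^R(y,y';x)\,T(y,x,y';z)$ with $T^S,T^R$ as in~\eqref{eq:TS}--\eqref{eq:TR}. The crucial step, specific to $|E|=2$, is the algebraic identity $T^R(y,y';x)=T^S(y',y;x)$ (Lemma~\ref{lem:2col}), whose proof uses that $DU=UD$ on a two-point set forces $U(0;1)D(1;0)=D(0;1)U(1;0)$. Iterating the resulting relation $T^S(y,y';z)=\sum_x T^S(y',y;x)\,T(y,x,y';z)$ once more shows that $(T^S(y,y';x))_{x}$ is a left $1$-eigenvector of $M_{y,y'}$; Perron--Frobenius then gives $T^S=\tilde T$, and since the $(D,U)$-HZMC is invariant for the SPCA with kernel $T^S$, Theorem~\ref{thm:CM15} yields Cond~\ref{cond:CM15-1} for $\tilde T$. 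You never invoke this mechanism; without Lemma~\ref{lem:2col} the link between the pair $(D,U)$ and $\tilde T$ is simply missing.

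For sufficiency you take an unnecessary detour through $\PCA{A}^2$. As you yourself note at the end, Cond~\ref{cond:2col} is exactly the hypothesis of Lemma~\ref{lem:inv} for the pair $(D^\gamma,U^\gamma)$, and that lemma gives directly that the $(D^\gamma,U^\gamma)$-HZMC is invariant under $\PCA{A}$, with no need to pass through even rows or $\PCA{\tilde A}$. So the ``main obstacle'' you highlight is not an obstacle at all; the actual work sits in the necessity direction, and that is precisely where your argument is incomplete.
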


 Proofs of these two theorems are done in Section~\ref{sec:t3col} and~\ref{sec:t2col}. These two theorems applied to two particular TPCA, $\PCA{A}_8$ and $\PCA{A}_6$ defined in Sections~\ref{sec:PCA8V} and~\ref{sec:PCA6V}, give another way to prove Propositions~\ref{cor:inv2} and~\ref{cor:inv3} (see Section~\ref{sec:p2col}).

\subsection{TPCA $\PCA{A_8}$ and 8-vertex models} \label{sec:PCA8V}
Now, we consider a family of TPCA related to the 8-vertex model when $a+c=b+d$. $\PCA{A_8}$ is a TPCA with alphabet $E=\{0,1\}$ and transition matrix $T$ such that, for any $k \in \{0,1\}$,
\begin{itemize}[topsep=0pt,itemsep=0pt,partopsep=0pt,parsep=0pt]
\item $T(k,k,k;k) = T(k,1-k,k;1-k) = r$,
\item $T(k,k,k;1-k) = T(k,1-k,k,k) = 1-r$,
\item $T(k,1-k,1-k;k) = T(k,k,1-k;1-k) = p$,
\item $T(k,1-k,1-k;1-k) = T(k,k,1-k;k) = 1-p$.
\end{itemize}

To show their links with vertex models, we define first $\overline{K}_\infty$'s faces and coloring of $\overline{K}_\infty$. We call internal faces of $\overline{K}_\infty$ any square whose vertices are $\{(i,t),(i-1/2,t-1/2),(i,t-1),(i+1/2,t-1/2)\}$ for any $(i,t) \in \overline{V}_\infty$ such that $t \neq 0$; such a face is numbered $(i-t,2t)$. And we call external faces any triangle whose vertices are $\{(i-1/2,-1/2),(i,0),(i+1/2,-1/2)\}$ for any $i \in \ZZ$; such a face is numbered $(i,0)$. Set of (internal and external) faces of $\overline{K}_\infty$ is denoted $\overline{F}_\infty$. A 2-coloring of $\overline{K}_\infty$ is any function $C$ from $\overline{F}_\infty$ to $\{0,1\}$. The set of 2-colorings is denoted $\mathcal{C}_2$.

Now we can remark that any realization of the space-time diagram of $\PCA{A_8}$ is a 2-coloring of $\overline{K}_\infty$ (see Figure~\ref{fig:2col}). Baxter\cite[Section~8.13]{Baxter82} presents a function, denoted here $\Theta_8$, from $\mathcal{C}_2$ to $\overline{\Omega}^8_\infty$. This function is the following: starting with any $C \in \mathcal{C}_2$, we obtain an orientation $O=\Theta_8(C) \in \overline{\Omega}^8_\infty$ by the following rule: take any edge $(i,t)$ (this edge is adjacent to 2 faces $f$ and $f'$), the orientation $e(i,t)$ is
\begin{equation} \label{eq:colF}
e(i,t) = \ind{C(f)=C(f')}.
\end{equation}
Conversely, starting with an orientation $O \in \overline{K}_\infty$, we can obtain two distinct 2-colorings $C$ and $C'$ $\in \mathcal{C}_2$ ($\{C,C'\} = \Theta_8^{-1}(\{O\})$) by this way: first, color any face $f$ by any color $0$ or $1$, then color adjacent faces to the previous one respecting~\eqref{eq:colF}, and make it iteratively to color any face. Two distinct 2-colorings $C$ and $C'$, obtained from the same orientation $O$, satisfy the following property: for any face $f$, $C(f) \neq C'(f)$. See Figure~\ref{fig:2col} as an example of $\Theta_8$.\par
First, note that, for any $t$, knowing $C_{|(F_t,F_{t+1})}$, the coloring of the set of faces $\{(i,t') : i \in \ZZ, t'\in \{t,t+1\}\}$, is enough to know the orientations of $(e(i,t):i \in \ZZ) = \Theta_8(C_{|(F_t,F_{t+1})})$. Hence, we can define $\mu_t = \Theta_8(\nu_t)$, a law on $(e(i,t):i \in \ZZ)$ according to $\nu_t$ a law on 2 coloring faces of $\{(i,t') : i \in \ZZ, t'\in \{t,t+1\}\}$ by, for any $n \in \NN$, for any $e_n=(e_{i,t}: i \in \llbracket -n, n \rrbracket = [-n,n] \cap \ZZ )$,
\begin{equation} \label{eq:ext8}
\mu_t(e_n) = \prob{(e(i,t)=e_{i,t} : i \in \llbracket -n, n \rrbracket)} = \sum_{C \in \{C_1,C_2\} = \Theta_8^{-1}(e_n)} \nu_t(C)
\end{equation}

\begin{figure}
\begin{center}
\begin{tabular}{ccc}
\includegraphics{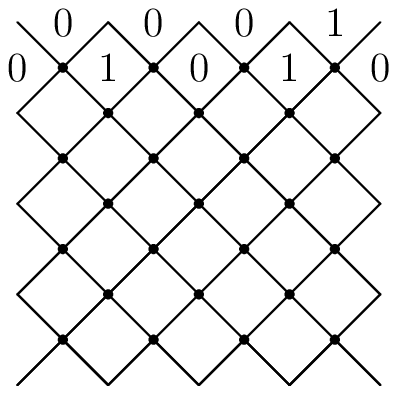} & \includegraphics{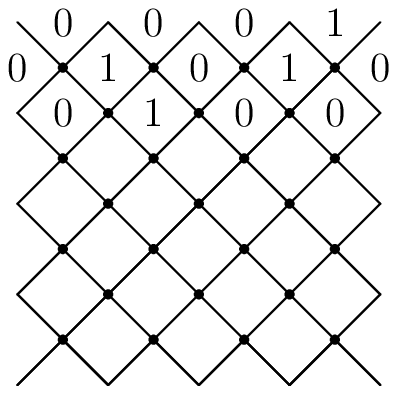} & \includegraphics{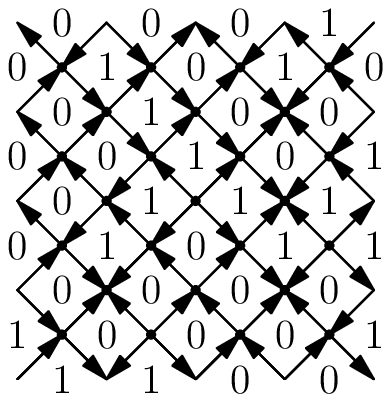} \\
initial state & after an iteration of $A_8$ & final result
\end{tabular}
\end{center}
\caption{One realization of the space time diagram of the TPCA $\PCA{A_8}$ and its associated 8-vertex model configuration.}
\label{fig:2col}
\end{figure}

Now, we can show the reason of our choice for $\PCA{A_8}$.
\begin{lemma} \label{lem:2colPas}
Let $\nu_0$ be any probability measure on $\{0,1\}^\ZZ$ and let $C$ be the space-time diagram of $\PCA{A_8}$ such that $C_{|(F_0,F_1)} \sim \nu_0$, then $\Theta_8(C) \sim \mathcal{L}(\Theta_8(\nu_0);p,r)$. 
\end{lemma}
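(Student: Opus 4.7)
The plan is to establish that, under the pushforward by $\Theta_8$, the face-valued space-time dynamics of $\PCA{A_8}$ is transported onto the edge-valued dynamics $\mathcal{L}(\cdot;p,r)$ of Definition~\ref{def:loiqpr}. The proof naturally splits into: (i) matching initial laws, (ii) matching the elementary local transition, and (iii) propagating by induction on $t$, taking into account independence of updates and the parity alternation between operators $T_0$ and $T_1$.

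For (i), by the very definition~\eqref{eq:ext8}, the edge orientations $(e(i,0):i\in\ZZ)$ recovered from the two face-rows $C_{|(F_0,F_1)}$ are by construction distributed as $\Theta_8(\nu_0)$, so the initial marginal matches immediately.

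For (ii), I would look at a single triangular update of $\PCA{A_8}$. It takes three adjacent faces $y$ (top-left), $x$ (bottom), $y'$ (top-right) at a given time and produces a new face $z$ at the next time, drawn according to $T(y,x,y';\cdot)$. The two input edges (separating $y$ from $x$ and $x$ from $y'$) carry states $e_1 = \mathbf{1}[y=x]$, $e_2 = \mathbf{1}[x=y']$, and the two output edges (separating $y$ from $z$ and $z$ from $y'$) carry states $e_1' = \mathbf{1}[y=z]$, $e_2' = \mathbf{1}[z=y']$. A short case analysis, distinguishing $y = y'$ (equivalently $e_1 = e_2$) from $y \neq y'$ (equivalently $e_1 \neq e_2$), and a direct reading of the eight entries of $T$ defining $\PCA{A_8}$, show that the conditional law of $(e_1',e_2')$ given $(e_1,e_2)$ is exactly that prescribed by $T_u$ in Definition~\ref{def:loiqpr}: the pair $(k,k)$ stays with probability $r$ and flips to $(1-k,1-k)$ with probability $1-r$; the pair $(k,1-k)$ stays with probability $1-p$ and becomes $(1-k,k)$ with probability $p$.

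For (iii), the triangular units updated by $\PCA{A_8}$ between two successive face-rows are mutually independent and partition the associated edges into consecutive pairs $(2i,2i+1)$ at one parity of $t$ and $(2i-1,2i)$ at the other, reproducing exactly the pair partition and independence clause underlying $T_0$ and $T_1$ in Definition~\ref{def:loiqpr}. Chaining these one-step identities by induction on $t$ then yields $\Theta_8(C) \sim \mathcal{L}(\Theta_8(\nu_0);p,r)$. The main obstacle is purely bookkeeping: checking that the eight entries of $T$ translate, case by case, into the four transition probabilities of $T_u$, and that the triangular lattice structure of $\PCA{A_8}$ aligns the update units with the correct pair partition at each parity of $t$—conceptually transparent but requiring careful geometric accounting.
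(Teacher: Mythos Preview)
Your proposal is correct and follows exactly the approach the paper indicates: the paper's proof is a one-line remark that ``images by $\Theta_8$ of initial laws and transitions of $\PCA{A_8}$ are those that define $\mathcal{L}(\mu;p,r)$ in Definition~\ref{def:loiqpr},'' and your three-step argument (matching initial laws, matching the local transition via a case analysis of the eight entries of $T$, and propagating by independence and parity alternation) is precisely the unpacking of that sentence. The only difference is level of detail, not substance.
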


\begin{proof}
Proofs of these two lemmas are based on the fact that images by $\Theta_8$ of initial laws and transitions of $\PCA{A_8}$ are those that define $\mathcal{L}(\mu;p,r)$ in Definition~\ref{def:loiqpr}.
\end{proof}

Hence, $\PCA{A_8}$ is related to laws $\mathcal{L}(\mu;p,r)$ and so on to the $8$-vertex model. Now, the study of invariant Markovian laws of $\PCA{A_8}$ give us a unique element.
\begin{proposition} \label{prop:2col}
For any $r \in (0,1)$ and $p \in (0,1)$. The set of invariant HZMC of $\PCA{A_8}$ has a unique element that is the $(D,U)$-HZMC whose kernels $D$ and $U$ are such that $D=U$ and, for any $i,j \in \{0,1\}$, $D(i;j) = 1/2$.
\end{proposition}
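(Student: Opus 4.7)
The plan is to apply Theorem~\ref{thm:2col} to $\PCA{A_8}$ and show that every auxiliary quantity built from $T$ collapses to a uniform distribution on $\{0,1\}$, so that both existence and uniqueness follow essentially from Perron-Frobenius.

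First I would compute, for each pair $(y,y') \in \{0,1\}^2$, the $2 \times 2$ stochastic matrix
\[
M(y,y')_{x,z} \;=\; \sum_{u \in \{0,1\}} T(y',x,y;u)\,T(y,u,y';z)
\]
whose stochastic left $1$-eigenvector is by definition $\tilde{T}(y,y';\cdot)$. A direct inspection of the eight weights of $\PCA{A_8}$ shows that $M(y,y')$ always has the doubly-stochastic symmetric form $\bigl(\begin{smallmatrix} \alpha & 1-\alpha \\ 1-\alpha & \alpha \end{smallmatrix}\bigr)$, with $\alpha = r^2+(1-r)^2$ when $y=y'$ and $\alpha = p^2+(1-p)^2$ when $y \neq y'$. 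Since $p,r \in (0,1)$ the entries of $M(y,y')$ are strictly positive, so by Perron-Frobenius its unique stochastic left $1$-eigenvector is $(1/2,1/2)$; hence $\tilde{T}(y,y';x) = 1/2$ for all $x,y,y'$.

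Second, I would apply Theorem~\ref{thm:CM15} to the SPCA $\PCA{\tilde{A}}$ whose transition matrix $\tilde{T}(y,y';z)=1/2$ is constant. Everything trivializes: \C~\ref{cond:CM15-1} reduces to $(1/2)^4=(1/2)^4$, the Perron vectors satisfy $\nu=\gamma=(1/2,1/2)$, and the formulas~\eqref{eq:DUeta} give $D^\gamma(i;j)=U^\gamma(i;j)=1/2$ for all $i,j$. Then \C~\ref{cond:CM15-2} ($D^\gamma U^\gamma = U^\gamma D^\gamma$) holds automatically, while \C~\ref{cond:2col} becomes
\[
\tfrac{1}{4} \;=\; \tfrac{1}{4} \sum_{x \in \{0,1\}} T(y,x,y';z),
\]
which is true because inspection of the eight cases in the definition of $\PCA{A_8}$ yields $T(y,0,y';z)+T(y,1,y';z)=1$ (the two nonzero weights for each fixed $(y,y',z)$ are always $\{r,1-r\}$ or $\{p,1-p\}$). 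Theorem~\ref{thm:2col} then gives that the $(D^\gamma,U^\gamma)$-HZMC --- i.e.\ the HZMC with $D=U$ the constant $1/2$ matrix --- is an invariant probability measure of $\PCA{A_8}$.

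For uniqueness, any invariant HZMC of $\PCA{A_8}$ must, by Theorem~\ref{thm:2col}, be of the form $(D^{\gamma'},U^{\gamma'})$-HZMC for a $\gamma'$ obtained from the SPCA $\PCA{\tilde{A}}$ via the procedure of Theorem~\ref{thm:CM15}; but the positivity of the relevant matrices makes $\gamma'$ unique by Perron-Frobenius, so $\gamma'=\gamma$ and the HZMC is unique. The only real obstacle I expect is bookkeeping in the first step: in the matrix $M(y,y')$ one must track carefully the roles of $y$ and $y'$ in $T(y',x,y;u)$ versus $T(y,u,y';z)$ to confirm that all four cases really yield matrices of the symmetric form above, and in particular that the parameter $p$ governs the $y \neq y'$ cases while $r$ governs the $y = y'$ cases (rather than the other way around).
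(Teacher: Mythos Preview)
Your proof is correct and follows exactly the same route as the paper: apply Theorem~\ref{thm:2col}, compute the four matrices $M(y,y')$, observe they are symmetric doubly stochastic so that $\tilde T(y,y';\cdot)=(1/2,1/2)$ by Perron--Frobenius, and then check that the constant SPCA $\tilde T\equiv 1/2$ forces $D^\gamma=U^\gamma\equiv 1/2$ and that \C~\ref{cond:2col} holds. Your assignment of parameters ($r$ on the diagonal $y=y'$, $p$ off the diagonal) is the correct one from the definition of $\PCA{A_8}$, and in any case the eigenvector $(1/2,1/2)$ is insensitive to this labelling.
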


Proof of this proposition is done in Section~\ref{sec:p2col}. Proposition~\ref{cor:inv2} is, then, an immediate consequence of Lemma~\ref{lem:2colPas} and of this proposition. In addition, 
\begin{lemma} \label{lem:ergoFace}
For any $r \in (0,1)$ and $p \in (0,1)$, $\PCA{A_8}$ is ergodic: for any initial law $\nu_0$, let $C$ be the space-time diagram of $\PCA{A_8}$ such that $C_{|(F_0,F_1)} \sim \nu_0$ and denote $\nu_t$ the law of $C_{|(F_t,F_{t+1})}$, then $\nu_t \to \PM{1/2}$ as $t \to \infty$.
\end{lemma}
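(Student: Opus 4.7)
The plan is to deduce ergodicity of $\PCA{A_8}$ from Proposition~\ref{prop:2col}, which identifies the unique invariant HZMC of $\PCA{A_8}$ as the $(D,U)$-HZMC with $D=U$ equal to the uniform stochastic matrix on $\{0,1\}$, combined with a coupling argument that exploits the positive rates of the transition matrix. First, I would observe that this invariant HZMC is precisely $\PM{1/2}$ on $\{0,1\}^{F_t \cup F_{t+1}}$: because $D$ sends every input to the uniform distribution on $\{0,1\}$, the successive face values along the zigzag are i.i.d.\ Bernoulli$(1/2)$, so the HZMC coincides with the product measure. In particular, $\PM{1/2}$ is invariant, and the problem reduces to showing convergence from an arbitrary initial distribution to this specific invariant measure.

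Next, I would construct a joint realisation of two space-time diagrams of $\PCA{A_8}$ on a common probability space: one starting from the arbitrary initial law $\nu_0$ and the other starting from $\PM{1/2}$ (hence of law $\PM{1/2}$ at every time, by invariance). At every cell, a maximal coupling of the two local transitions is applied using shared auxiliary randomness. Since $p,r \in (0,1)$, each transition probability $T(y,x,y';z)$ is bounded away from $0$ and $1$, so the two local outputs coincide with probability at least some uniform $2\varepsilon>0$, regardless of the local inputs.

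The main step is to show that, on any fixed finite window $W \subset F_t \cup F_{t+1}$, the probability that the two coupled trajectories disagree on $W$ tends to $0$ as $t \to \infty$. This is a disagreement-percolation argument tailored to finite-range positive-rate PCA in one spatial dimension, in the spirit of the ergodicity results of \cite{CM17}: one dominates the random set of disagreement sites by an oriented percolation cluster in which sites are independently extinguished at each step with probability at least $2\varepsilon$, and then shows that this percolation is subcritical — possibly after a block renormalisation in the parameter regions where $\varepsilon$ is small. Subcriticality yields almost-sure extinction on bounded windows, so $\nu_t|_W$ and $\PM{1/2}|_W$ become arbitrarily close in total variation; the weak convergence $\nu_t \to \PM{1/2}$ follows.

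The main obstacle will be making the oriented-percolation comparison precise over the entire parameter range $p,r\in(0,1)$; the TPCA's triangular, order-$2$ dependence is less standard than the SPCA setting. I would expect to invoke Theorems~\ref{thm:3col} and~\ref{thm:2col}, which reduce invariance questions for TPCAs to an auxiliary SPCA $\PCA{\tilde A}$, so that the SPCA ergodicity machinery of \cite{CM17} can be transferred back to $\PCA{A_8}$. This reduction, combined with the explicit form of $\PCA{A_8}$ (for which $\tilde T$ is easy to compute), should then furnish the quantitative subcriticality estimate needed to close the argument.
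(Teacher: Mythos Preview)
The paper does not actually prove this lemma: immediately after the statement it says that the proof ``is done in a more general context in an incoming paper on triangular probabilistic cellular automata~\cite{CM17}''. So there is no in-paper argument to compare your proposal against; the paper simply imports the result.

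Your sketch is reasonable in spirit, but it has a genuine gap at exactly the point you flag as ``the main obstacle''. The maximal-coupling/disagreement-percolation route gives ergodicity only when the uniform agreement probability $2\varepsilon$ is large enough to make the associated oriented percolation subcritical; positive rates alone do \emph{not} imply ergodicity for one-dimensional PCA (this is a well-known open issue in general), so the argument as written covers only a neighbourhood of the parameter space, not all of $(p,r)\in(0,1)^2$. Your proposed fix --- invoking Theorems~\ref{thm:3col} and~\ref{thm:2col} to pass to the auxiliary SPCA $\PCA{\tilde A}$ --- does not close the gap: those theorems characterise which HZMC distributions are \emph{invariant} for a TPCA by relating them to invariant HZMCs of an SPCA; they say nothing about convergence from arbitrary initial laws, and in particular they do not transfer ergodicity of $\PCA{\tilde A}$ back to ergodicity of $\PCA{A_8}$. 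Establishing that transfer (or a direct ergodicity argument valid for all $p,r\in(0,1)$) is precisely the content deferred to~\cite{CM17}, and your proposal does not supply it.
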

Proof of this lemma is done in a more general context in an incoming paper on triangular probabilistic cellular automata~\cite{CM17}. This lemma permits to prove Proposition~\ref{prop:ergo}.

\begin{proof}[Proof of Proposition~\ref{prop:ergo}]
Let $\mu$ be any law on $\{0,1\}^\ZZ$. Now, we have the choice for our initial law $\nu_0$ on coloring. We choose here the one that is symmetric: for any $n$, for any $C = (c_i : i \in \llbracket -2n-1,2n \rrbracket) \in \{0,1\}^{{\llbracket -n,n \rrbracket} \times {\llbracket -n-1,n \rrbracket}}$,
\begin{equation}
\nu_0(C) = \nu_0((1- c_i : i \in \llbracket -2n-1,2n \rrbracket)) = \frac{1}{2} \mu(\Theta_8(C)). 
\end{equation}
Now, by Lemma~\ref{lem:ergoFace}, $\nu_t \to \PM{1/2}$. And, so, $\mu_t = \Theta_8(\nu_t) \to \PM{1/2}$, that is the law of $(e(i,t):i \in \ZZ)$.
\end{proof}

\subsection{TPCA $\PCA{A_6}$ and 6-vertex models} \label{sec:PCA6V}
In this section, we show relations between 6-vertex model when $a+c=b$ and TPCA $\PCA{A_6}$. $\PCA{A_6}$ is a TPCA with alphabet $E=\{0,1,2\}$ and transition matrix $T$ such that, for any $i \in \{0,1,2\}$, 
\begin{itemize}[topsep=0pt,itemsep=0pt,partopsep=0pt,parsep=0pt]
\item $T(i,i+1,i+2 ;i+1) = 1$,
\item $T(i,i+1,i;i+2) = p$,
\item $T(i,i+1,i;i+1) = 1-p$
\end{itemize}
where additions on $E$ are done modulo 3. 

Links between $\PCA{A_6}$ and 6-vertex model are similar to the ones between $\PCA{A_8}$ and 8-vertex model, instead of that 2-coloring is replaced by proper 3-coloring.
A proper 3-coloring of $\overline{K}_\infty$ is any function $C$ from $\overline{F}_\infty$ to $\{0,1,2\}$ such that if two different faces $f,f'$ have a common edge then $C(f) \neq C(f')$. The set of proper 3-colorings is denoted $\mathcal{C}_3$.

We can remark that if we start iterations of $\PCA{A_6}$ with an initial state $(S_0,S_1)$ such that, for any $i \in \ZZ$, $S_{0}(i) \neq S_{1}(i)$ and $S_{1}(i) \neq S_{0}(i+1)$ a.s., then the same condition is satisfied for any $t \geq 0$, i.e., for any $t \in \NN$, for any $i \in \ZZ$, $S_{t}(i) \neq S_{t+1}(i)$ and $S_{t+1}(i) \neq S_{t}(i+1)$ a.s. Hence, a space-time diagram realization of $\PCA{A_6}$ is a proper $3$-coloring of $\overline{K}_\infty$. Moreover, there exists a function, denoted here $\Theta_6$, between $\mathcal{C}_3$ and $\overline{\Omega}^6_\infty$~\cite[Section~8.13]{Baxter82}. This function is: let $C \in \mathcal{C}_3$, take any edge $(i,t)$ of $\overline{K}_\infty$, edge $(i,t)$ is oriented such that if we look the oriented edge in front of us oriented to the top, then the value of the right face of the edge is equal (modulo 3) to the value of the left face +1 (see Figure~\ref{fig:cont3}). Conversely, starting with an orientation $O \in \overline{\Omega}^6_\infty$, we get three distinct proper 3-colorings $\{C,C',C''\}= \Theta_6^{-1}(\{O\})$. These three distinct 3-colorings $C$, $C'$ and $C''$  satisfy: for any face $f \in \overline{F}_\infty$, $\{C(f),C'(f),C''(f)\} = \{0,1,2\}$.\par

\begin{figure}
\begin{center}
\begin{tabular}{cccc}
\includegraphics{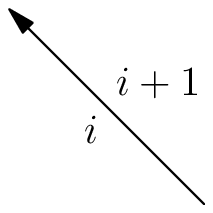} & \includegraphics{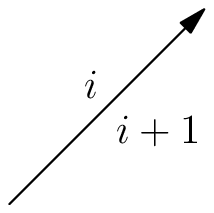} & \includegraphics{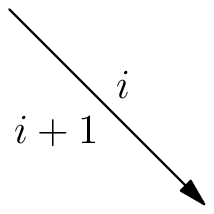} & \includegraphics{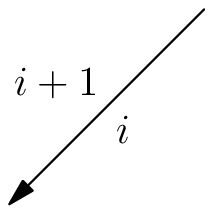}
\end{tabular}
\end{center}
\caption{Relations observed by $\Theta_6$ permitting to go from a proper 3-coloring of $\overline{K}_\infty$ to a configuration of the 6-vertex model.}
\label{fig:cont3}
\end{figure}

In a similar way that has been done in~\eqref{eq:ext8}, we can define $\Theta_6$ as a function on measure of $\{0,1,2\}^\ZZ$.
\begin{lemma} \label{lem:3colPas}
Let $\mu$ be any measure on $\{0,1,2\}^\ZZ$ such that if $(S_0,S_1) \sim \mu$ then for any $i \in \ZZ$, $S_{0}(i) \neq S_{1}(i)$ and $S_{1}(i) \neq S_{0}(i+1)$ a.s. Let $C$ be the space-time diagram of $\PCA{A_6}$ such that $(S_0,S_1) \sim \mu$, then $\Theta_6(C) \sim \mathcal{L}(\Theta_6(\mu);p,1)$.
\end{lemma}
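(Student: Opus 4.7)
The plan is to mirror the strategy used for Lemma~\ref{lem:2colPas}, checking separately that (i) the dynamics of $\PCA{A_6}$ preserves the proper-coloring constraint so that $C$ is a well-defined element of $\mathcal{C}_3$, (ii) the initial distribution of orientations matches $\Theta_6(\mu)$, and (iii) each local stochastic step of $\PCA{A_6}$ is transported by $\Theta_6$ to the local transition rule of $\mathcal{L}(\cdot;p,1)$ given in Definition~\ref{def:loiqpr}.

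First I would inspect the transition matrix of $\PCA{A_6}$: its support consists of triples $(y,x,y')$ with $y\neq x$ and $x\neq y'$ (these are precisely the cyclic shifts of $(i,i+1,i+2)$ and $(i,i+1,i)$ mod $3$), and in every such case the output $z$ is chosen so that $z\neq y$ and $z\neq y'$. Combined with the hypothesis on $(S_0,S_1)$, a straightforward induction on $t$ yields that the conditions $S_t(i)\neq S_{t+1}(i)$ and $S_{t+1}(i)\neq S_t(i+1)$ hold for all $i\in\ZZ$ and all $t\geq 0$. Therefore the space-time diagram $C$ realises an element of $\mathcal{C}_3$, so $\Theta_6(C)\in\overline{\Omega}^6_\infty$ is well defined. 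Step (ii) is then an immediate consequence of the definition of the pushforward measure $\Theta_6(\mu)$ used in the analogue of \eqref{eq:ext8}.

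For step (iii), I would decompose a single TPCA update into the creation of the faces $(S_{t+2}(i):i\in\ZZ)$, lying one level beneath the two rows $(S_t,S_{t+1})$ already coloured. Creating the face $z=S_{t+2}(i)$ from its neighbours $(y,x,y')=(S_{t+1}(i),S_t(i+1),S_{t+1}(i+1))$ adds exactly the two $\overline{K}_\infty$-edges separating $z$ from $y$ and from $y'$, and these are precisely the two edges whose orientations are updated by one application of $T_0$ (or $T_1$, depending on the parity of~$t$). Two cases arise from the proper-coloring constraint: (a) if $y\neq y'$ then $x$ is forced to be the third colour and the TPCA rule enforces $z=x$ with probability one; under Figure~\ref{fig:cont3} this translates to the pair of orientations remaining of the form $(k,k)$, matching the rule $(k,k)\mapsto(k,k)$ with probability $r=1$; (b) if $y=y'$ then the old edges adjacent to $x$ carry opposite orientations $(k,1-k)$, and the TPCA rule $z=x$ with probability $1-p$ or $z$ equal to the third colour with probability $p$ corresponds respectively to $(k,1-k)\mapsto(k,1-k)$ and $(k,1-k)\mapsto(1-k,k)$, matching exactly the rule of $\mathcal{L}(\cdot;p,1)$. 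The product structure of the TPCA step across positions $i$ (with non-overlapping triangles at each parity) matches the independence requirement of Definition~\ref{def:loiqpr}.

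The main obstacle is the geometric bookkeeping: one has to verify that each pair of edges produced by a single TPCA step is exactly one of the pairs acted on by $T_0$ or $T_1$, and that the three-colour encoding translates consistently, under the Figure~\ref{fig:cont3} convention, to the two-valued orientation $e\in\{0,1\}$ across the alternation of row parities. Once this bookkeeping is laid out in the same style as the proof of Lemma~\ref{lem:2colPas}, each of the finitely many cases collapses to a short check on the transition weights, and the proof is complete.
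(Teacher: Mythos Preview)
Your proposal is correct and is exactly the paper's approach spelled out in detail: the paper's proof is the single sentence that the images by $\Theta_6$ of the initial law and of the transitions of $\PCA{A_6}$ are those defining $\mathcal{L}(\Theta_6(\mu);p,1)$. One small slip worth fixing: the admissible triples $(y,x,y')$ with $y\neq x$ and $x\neq y'$ number twelve, not six, so the patterns $(i,i{+}1,i{+}2)$ and $(i,i{+}1,i)$ cover only half of them; the mirror patterns $(i,i{-}1,i{-}2)$ and $(i,i{-}1,i)$ also arise and need the obvious symmetric extension of the transition table (implicit in the paper), after which your case analysis goes through unchanged.
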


\begin{proof}
Images by $\Theta_6$ of initial laws and transition of $\PCA{A_6}$ are those that define $\mathcal{L}(\Theta_6(\mu);p,1)$.
\end{proof}

An interesting property of $\PCA{A_6}$ is:
\begin{proposition} \label{prop:3col}
For any $p \in [0,1]$.  The set of invariant HZMC of $\PCA{A_6}$ contains the set of $(D,U)$-HZMC whose kernels $D$ and $U$ are such that $D=U$ and, for any $i \in \{0,1,2\}$, $D(i;i+1 \text{ mod } 3) = q$ and $D(i;i-1 \text{ mod } 3) = 1-q$ for any $q \in [0,1]$.
\end{proposition}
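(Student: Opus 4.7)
The plan is a direct verification: start from $(x,y) \sim (D,D)$-HZMC with $D(i;i+1)=q$, $D(i;i-1)=1-q$, $D(i;i)=0$, apply one iteration of $\PCA{A_6}$ to obtain $z$, and show $(y,z)$ has the same $(D,D)$-HZMC law. First observe that $D$ is doubly stochastic, so its unique stationary distribution is uniform $\rho=(1/3,1/3,1/3)$. Hence under the HZMC every single site is marginally uniform and the one-row marginal on $y$ is a Markov chain with kernel $D^2 = DU$ and uniform stationary distribution. Since the TPCA merely relabels the middle row of the space-time diagram, this $y$-marginal is automatically preserved, so it is enough to check the conditional law of $z$ given $y$.

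By the Markov property of the zigzag $\ldots, y_j, x_{j+1}, y_{j+1}, \ldots$, conditional on the full $y$-sequence the random variables $\{x_{j+1}\}_{j \in \ZZ}$ are mutually independent, with
\[
P(x_{j+1} = b' \mid y_j = a,\, y_{j+1} = c) = \frac{D(a;b')\, D(b';c)}{D^2(a;c)}.
\]
Since $\PCA{A_6}$ samples each $z_j$ independently from $T(y_j, x_{j+1}, y_{j+1}; \cdot)$, the $z_j$'s are likewise conditionally independent given $y$, and inherit the conditional marginal
\[
P(z_j = b \mid y_j = a,\, y_{j+1} = c) = \sum_{b'} \frac{D(a;b')\, D(b';c)}{D^2(a;c)}\, T(a, b', c; b).
\]
The invariance of the $(D,D)$-HZMC then reduces to the single algebraic identity
\begin{equation} \label{eq:plan-key}
\sum_{b'} D(a;b')\, D(b';c)\, T(a,b',c;b) = D(a;b)\, D(b;c) \qquad \forall\, a,b,c \in \{0,1,2\}.
\end{equation}

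I will verify~\eqref{eq:plan-key} case by case. The cyclic symmetry $i \mapsto i+1 \pmod 3$, shared by $D$ and $T$, reduces the verification to $a = 0$. For $a = 0,\, c = 2$ (so $y_j \neq y_{j+1}$), only $b' = 1$ contributes because $D(0;0) = D(2;2) = 0$, and the deterministic rule $T(0,1,2;1) = 1$ forces the LHS to equal $q^2 \delta_{b,1} = D(0;1) D(1;2)\delta_{b,1}$, matching the RHS; the case $a = 0, c = 1$ is symmetric, handled via the rule $T(0,2,1;2) = 1$ in the same orbit of $T(i,i+1,i+2;i+1) = 1$ under the natural mirror symmetry of $\PCA{A_6}$. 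The main case is $a = c = 0$: both $b' = 1$ and $b' = 2$ contribute and the LHS collapses to $q(1-q) \bigl[ T(0,1,0;b) + T(0,2,0;b) \bigr]$; the crucial algebraic identity $T(0,1,0;b) + T(0,2,0;b) = \mathbf{1}_{b \neq 0}$, which follows from $(1-p)+p = 1$ in the two nontrivial $b$-entries, gives $q(1-q)\,\mathbf{1}_{b \neq 0} = D(0;b) D(b;0)$, matching the RHS. The main obstacle is organizing the case analysis so that the cancellations involving the free parameter $p$ work uniformly, guaranteeing that the same identity holds for every $q \in [0,1]$ and thereby producing the entire one-parameter family of invariant HZMC distributions at once.
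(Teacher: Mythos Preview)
Your argument is correct and follows the same route as the paper: the paper's proof simply says to verify \C~\ref{cond:inv} (which is exactly your displayed identity with $D=U$) and then cites Remark~\ref{rem:inv}, whereas you re-derive that reduction and carry out the case-by-case verification explicitly. Your appeal to the mirror symmetry $i\mapsto -i\pmod 3$ to read off the transitions $T(y,y{-}1,\cdot\,;\cdot)$ is correct and in fact makes explicit something the paper's definition of $\PCA{A_6}$ leaves unstated.
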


This property associated to Lemma~\ref{lem:3colPas} permits to get an alternative proof of Proposition~\ref{cor:inv3}. Proof of Proposition~\ref{prop:3col} is done in Section~\ref{sec:p2col}.

\subsection{Proofs of previous results on  TPCA} \label{sec:PACP}
\subsubsection{Preliminary results on TPCA and invariant HZMC distributions} \label{sec:LACP}
First of all, we recall necessary and sufficient conditions for a $(D,U)$-HZMC to be an invariant probability measure of a SPCA.
\begin{proposition}[Proposition~1.2 of~\cite{CM15}] \label{prop:CM15}
Let $E$ be a finite set. Let $\PCA{A}$ be a PCA with positive rate and transition matrix $T$ and $(D,U)$ be two transition matrices from $E$ to $E$. The $(D,U)$-HZMC distribution is an invariant probability measure of $\PCA{A}$ iff the two following conditions hold:
\begin{cond} \label{cond:CM15T}
for any $x,x',y \in E$,
\begin{equation} \label{eq:CM15T}
T(x,x';y) = \frac{D(x;y) U(y;x')}{(DU)(x;x')}
\end{equation}
\end{cond}
and
\begin{cond} \label{cond:CM15DU}
\begin{equation} \label{eq:CM15DU}
DU = UD.
\end{equation}
\end{cond}
\end{proposition}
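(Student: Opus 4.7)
The plan is to compare, row by row, the joint distribution of two consecutive rows produced by $\PCA{A}$ with a $(D,U)$-HZMC density, and to read off the two conditions as the compatibility constraints on the marginal and on the conditional law of the newly produced row. Write $\mu$ for a $(D,U)$-HZMC distribution on a pair of rows $(x, y)$ (with $x$ the longer row, playing the role of the previous line of the PCA, and $y$ the next line), and let $\nu$ denote the law of $(y, z)$ obtained by one further application of $\PCA{A}$, so that $\nu(y,z) = \mu_y(y) \prod_i T(y_i, y_{i+1}; z_i)$, where $\mu_y$ is the $y$-marginal of $\mu$.

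For the sufficiency direction, I assume \C~\ref{cond:CM15T} and \C~\ref{cond:CM15DU}. Summing out $x$ in the HZMC formula for $\mu$ (telescoping through the zigzag) yields $\mu_y(y) = \pi_{k_1}(y_{k_1}) \prod_i (UD)(y_i; y_{i+1})$, which by \C~\ref{cond:CM15DU} equals $\pi_{k_1}(y_{k_1}) \prod_i (DU)(y_i; y_{i+1})$. Substituting the expression for $T$ from \C~\ref{cond:CM15T} and using the resulting cancellation of $(DU)$-factors leaves exactly $\pi_{k_1}(y_{k_1}) \prod_i D(y_i; z_i) U(z_i; y_{i+1})$, which is the target $(D,U)$-HZMC density on $(y, z)$.

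For necessity, I would marginalize $\nu$ in $z$ in two ways. From the PCA definition, $\sum_z \nu(y,z)$ equals $\mu_y(y)$, which the HZMC structure of $\mu$ realizes as the Markov chain with transition kernel $UD$. From the assumed HZMC structure of $\nu$, the same sum is a Markov chain with transition kernel $DU$. Agreement of these two Markov chains on every finite window (for every $k_1 < k_2$) forces $UD = DU$, which is \C~\ref{cond:CM15DU}. Once the marginals match, the conditional law of $z$ given $y$ must coincide in the two descriptions: on one side $\prod_i T(y_i, y_{i+1}; z_i)$, on the other $\prod_i D(y_i; z_i) U(z_i; y_{i+1}) / (DU)(y_i; y_{i+1})$. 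Each factor on either side is a probability distribution on its own $z_i$, and two such products of probability kernels can coincide only factor by factor, yielding \C~\ref{cond:CM15T}.

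The main point requiring care is the bookkeeping of the zigzag: which row is longer, which kernel plays the role of $D$ versus $U$, and whether the telescoping produces $UD$ or $DU$ before versus after one PCA step. The positive-rate hypothesis ensures every entry of $DU$ is strictly positive, so the formula in \C~\ref{cond:CM15T} is well defined and the factor-by-factor identification of the conditional law is unambiguous; no deeper ingredient is required beyond this careful manipulation of marginals and conditionals.
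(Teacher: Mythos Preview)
The paper does not prove this proposition: it is quoted verbatim from~\cite{CM15} as ``Proposition~1.2 of~\cite{CM15}'' and used as a black box to set up the analogous TPCA statement (Lemma~\ref{lem:inv}). So there is no in-paper argument to compare against.

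That said, your outline is the standard one and is essentially what underlies the reference; it also mirrors the computation the paper carries out for the TPCA analogue in the proof of Lemma~\ref{lem:inv}. The sufficiency direction is correct as written: marginalizing out the previous row turns the zigzag into a Markov chain with kernel $UD$, \C~\ref{cond:CM15DU} converts this to $DU$, and substituting \C~\ref{cond:CM15T} makes the $(DU)$ factors cancel against the denominator, leaving the $(D,U)$-HZMC density on the new pair of rows. For necessity, your two observations are the right ones: equality of the $S_{t+1}$-marginal computed from $\mu$ (kernel $UD$) and from $\nu$ (kernel $DU$) on every finite window forces $UD=DU$ by positivity; and once the marginals match, the conditional law of the new row given $S_{t+1}$ is a product measure in both descriptions, so the factors must agree, giving \C~\ref{cond:CM15T}. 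The only point worth tightening is the treatment of the initial weight $\rho_{k_1}$ when you pass from the $x$-row to the $y$-row: one should note that on $\ZZ$ with positive rates the one-site marginals are forced to be the common stationary law of $DU=UD$, so no spurious extra constraint appears. The paper's own proof of Lemma~\ref{lem:inv} glosses over exactly this point in the same way.
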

This proposition is weaker than Theorem~\ref{thm:CM15} in the sense that \C~\ref{cond:CM15T} and~\C~\ref{cond:CM15DU} hold both on $T$ and $(D,U)$ and not just only on $T$. First step to prove Theorems~\ref{thm:3col} and~\ref{thm:2col} is to generalize this proposition to TPCA.
\begin{lemma} \label{lem:inv}
Let $E$ be a finite set. Let $\PCA{A}$ be a TPCA of transition matrix $T$ with positive rate and let $D$ and $U$ be two transition matrices from $E$ to $E$. The $(D,U)$-HZMC distribution is an invariant probability measure of $\PCA{A}$ iff 
\begin{cond} \label{cond:inv}
for any $y,y',z \in E$,
\begin{equation} \label{eq:inv}
D(y;z) U(z;y') = \sum_{x \in E} U(y;x) D(x;y') T(y,x,y';z).
\end{equation}
\end{cond}
\end{lemma}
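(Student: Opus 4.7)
The plan is to unravel both $\mu$ and $\tr{A}(\mu)$ on a finite cylinder and compare the resulting expressions. Fix $k_1=0$, $k_2=n$ and write the $(D,U)$-HZMC density on $((x_0,\dots,x_n),(y_0,\dots,y_{n-1}))$ as $\rho_0(x_0)\prod_{i=0}^{n-1} D(x_i;y_i)U(y_i;x_{i+1})$; the TPCA then produces $(z_0,\dots,z_{n-2})$ with conditional factor $\prod_{i=0}^{n-2} T(y_i,x_{i+1},y_{i+1};z_i)$, since $N_2(i)=\{i+1\}$. The crucial combinatorial observation is that after summing out the $x_j$, each $x_j$ with $1\le j\le n-1$ lies inside exactly one block $U(y_{j-1};x_j)D(x_j;y_j)T(y_{j-1},x_j,y_j;z_{j-1})$, while $x_0$ is confined to $\rho_0(x_0)D(x_0;y_0)$ and $x_n$ to $U(y_{n-1};x_n)$, the latter summing to $1$.

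The resulting factorization is
$\tr{A}(\mu)((y),(z))=(\rho_0 D)(y_0)\prod_{j=0}^{n-2}\Bigl[\sum_{x\in E} U(y_j;x)D(x;y_{j+1})T(y_j,x,y_{j+1};z_j)\Bigr]$,
while the target $(D,U)$-HZMC density on $(y,z)$ reads $(\rho_0 D)(y_0)\prod_{j=0}^{n-2} D(y_j;z_j)U(z_j;y_{j+1})$ (the marginal prefactor is forced, because $\mu$ and $\tr{A}(\mu)$ share the same $S_{t+1}$-marginal). Matching factor by factor collapses the invariance question onto the single cell-level identity that is precisely \C~\ref{cond:inv}. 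Sufficiency is then immediate; for necessity, the positive-rate assumption on $T$ (together with the positivity of $(\rho_0 D)(y_0)$ implicit in the HZMC assumption) lets us take $n=2$ and divide to recover \C~\ref{cond:inv} for every triple $(y,y',z)\in E^3$.

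A last consistency check is required: summing \C~\ref{cond:inv} over $z\in E$ and using that $T$ is stochastic in its last argument gives $(DU)(y;y')=(UD)(y;y')$, so $DU=UD$. This commutativity ensures that the marginal sequence $\rho_i$ of the HZMC is compatible with its image $\rho_i D$ under the PCA shift, so the $(D,U)$-HZMC structure on $(S_{t+1},S_{t+2})$ is genuinely of the same family. The main obstacle is essentially notational: one must align the TPCA neighborhood convention with the zigzag order $x_0\to y_0\to x_1\to y_1\to\cdots$ so that the sums over $x_j$ regroup into exactly one clean block per cell, with no residual cross-terms. Once this regrouping is in place, the lemma follows by a one-line identification of local factors, exactly in parallel with Proposition~\ref{prop:CM15} for SPCA but with the $T$-factor now intertwined with the HZMC weight through the shared variable $x_{i+1}$.
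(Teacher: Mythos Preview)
Your argument is correct and follows essentially the same route as the paper: expand the $(D,U)$-HZMC on a finite cylinder, apply the TPCA, sum out the $x_j$ so that each interior $x_j$ sits in exactly one block $U(y_{j-1};x_j)D(x_j;y_j)T(y_{j-1},x_j,y_j;z_{j-1})$, and then match the resulting product against the target HZMC density; for necessity the paper likewise reads off the identity from the smallest cylinder (conditioning on $x_1(0)=a$), which is your $n=2$ case. Your extra remark that summing \C~\ref{cond:inv} over $z$ forces $DU=UD$ is a pleasant consistency check not made explicit in the paper.
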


\begin{proof}
Let $\PCA{A}$ be a TPCA of t.m.\ $T$ with positive rate and $(D,U)$ two t.m.\ from $E$ to $E$.
 
$\bullet$  Suppose that the $(D,U)$-HZMC distribution is an i.p.m.\ of $\PCA{A}$. Suppose that a pair of lines $(x_0,x_1) \sim \HZ{(D,U)}$, then $(x_1,x_2) \sim \HZ{(D,U)}$ where $x_2$ is the image of $(x_0,x_1)$ by $\PCA{A}$. Now, for any $a,c,d \in E$,  we compute $\prob{x_1(0) = a, x_2(0) = d, x_1(1) = c | x_1(0)=a}$. On one hand,  the lines $(x_1,x_2) \sim \HZ{(D,U)}$, so
\begin{equation} \label{eq:truc1}
\prob{x_1(0) = a, x_2(0) = d, x_1(1) = c | x_1(0)=a} = D(a;d) U(d;c)
\end{equation}
and, on the other hand, the pair $(x_0,x_1) \sim \HZ{(D,U)}$ and $x_2$ is their image by $\PCA{A}$, so
\begin{align} 
& \prob{x_1(0) = a, x_2(0) = d, x_1(1) = c | x_1(0)=a} \nonumber \\
& = \sum_{b \in E} \prob{x_1(0) = a, x_0(1) = b, x_2(0) = d, x_1(1) = c | x_1(0)=a} \nonumber \\
& = \sum_{b \in E} \prob{x_1(0) = a, x_0(1) = b, x_2(0) = d | x_1(0)=a} \prob{x_1(1) = c | x_1(0)=a, x_0(1)=b, x_1(1)=c} \nonumber \\
& = \sum_{b \in E} U(a;b) D(b;c) T(a,b,c;d). \label{eq:truc2}
\end{align}
By~\eqref{eq:truc1} and~\eqref{eq:truc2}, we finally obtain that \C~\ref{cond:inv} is necessary.

$\bullet$ Reversely, suppose that \C~\ref{cond:inv} holds, and take a pair of lines $(x_0,x_1) \sim \HZ{(D,U)}$ and $x_2$ their image by $\PCA{A}$. Then, for any $k_1,k_2 \in \ZZ$, $k_1 < k_2$ for any $(b_i : k_1 \leq i \leq k_2) \in E^{k_2-k_1+1}$ and $(c_i : k_1 \leq i \leq k_2-1) \in E^{k_2-k_1}$, 
\begin{align}
& \prob{ (x_1(i) = b_i : k_1 \leq i \leq k_2) , (x_2(i) = c_i : k_1 \leq i \leq k_2-1) } \nonumber \\
& = \sum_{a_i \in E : k_1 \leq i \leq k_2+1} \rho_{k_1}(a_{k_1}) \left( \prod_{i=k_1}^{k_2} D(a_i;b_i) U(b_i;a_{i+1}) \right) \left( \prod_{i=k_1}^{{k_2}-1} T(b_i,a_{i+1},b_{i+1};c_i) \right) \\
& = \left(\sum_{a_{k_1} \in E} \rho_{k_1}(a_{k_1}) D(a_{k_1};b_{k_1}) \right) \prod_{i=k_1}^{{k_2}-1} \sum_{a_{i+1} \in E} U(b_i;a_{i+1})  D(a_{i+1};b_{i+1}) T(b_i,a_{i+1},b_{i+1};c_i)  \\
& = \rho_{k_1}(b_{k_1}) \prod_{i=k_1}^{k_2-1} D(b_i;c_i)  U(c_i;b_{i+1})
\end{align}
Then, the pair $(x_1,x_2) \sim \HZ{(D,U)}$.
\end{proof}

\begin{remark} \label{rem:inv}
When the transition matrix has not positive rates, \C~\ref{cond:inv} implies always that the $(D,U)$-HZMC is an invariant probability measure of $\PCA{A}$, but reverse is not true because \C~\ref{cond:inv} can hold on a subset of $E$, but not $E$ entirely.
\end{remark}

We continue proving Theorems~\ref{thm:3col} and~\ref{thm:2col} by seeing that, in Prop~\ref{prop:CM15}, for any $(D,U)$-HZMC, there exists a unique SPCA $\PCA{A^S}$ that lets the $(D,U)$-HZMC invariant, its transition matrix $T^{S}$ is, for any $y,y',z \in E$,
\begin{equation} \label{eq:TS}
T^{S}(y,y';z) = \frac{D(y;z)U(z;y')}{(DU)(y;y')}.
\end{equation}

For the same reason, there exists a unique SPCA $\PCA{A}^R$ that lets the $(U,D)$-HZMC invariant, its transition matrix $T^R$ is, for any $y,y',x \in E$,
\begin{equation} \label{eq:TR}
T^{R}(y,y';x) = \frac{U(y;x)D(x;y')}{(DU)(y;y')}.
\end{equation}

Then, \C~\ref{cond:inv} is equivalent, dividing by $(DU)(y;y')$ (not equal to zero in positive rates cases), to 
\begin{cond} \label{cond:inv2}
for any $y,y',z$,
\begin{equation} \label{eq:inv2}
T^{S}(y,y';z) = \sum_{x \in E} T^{R}(y,y';x) T(y,x,y';z).
\end{equation}
\end{cond}

\begin{corollary} \label{cor:TSPCA}
Let $E$ be a finite set. Let $\PCA{A}$ be a TPCA of transition matrix $T$ with positive rates and let $D$ and $U$ be two transition matrices from $E$ to $E$. The $(D,U)$-HZMC is an invariant probability measure of $\PCA{A}$ iff \C~\ref{cond:inv2} is satisfied with $T^S$ the transition matrix of the unique SPCA $\PCA{A^S}$ that lets the $(D,U)$-HZMC invariant and $T^R$ the transition matrix of the unique SPCA $\PCA{A^R}$ that lets the $(U,D)$-HZMC invariant.
\end{corollary}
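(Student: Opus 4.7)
The plan is to derive Corollary~\ref{cor:TSPCA} as a direct algebraic reformulation of Lemma~\ref{lem:inv}. Since $\PCA{A}$ has positive rates, Lemma~\ref{lem:inv} tells me that the $(D,U)$-HZMC is an invariant probability measure of $\PCA{A}$ iff \C~\ref{cond:inv} holds, i.e.\ $D(y;z) U(z;y') = \sum_{x \in E} U(y;x) D(x;y') T(y,x,y';z)$ for every $y,y',z \in E$. My aim is to transform this identity into \C~\ref{cond:inv2} by dividing both sides by $(DU)(y;y')$.

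Before dividing, I would briefly verify two ancillary points. First, the uniqueness claim for $\PCA{A^S}$ follows from Proposition~\ref{prop:CM15}: its \C~\ref{cond:CM15T} forces the transition matrix of any SPCA preserving a $(D,U)$-HZMC to be precisely $T^S(y,y';z) = D(y;z)U(z;y')/(DU)(y;y')$, i.e.\ exactly~\eqref{eq:TS}. Swapping the roles of $D$ and $U$ yields the corresponding statement for $\PCA{A^R}$ with $T^R$ as in~\eqref{eq:TR}. Second, the quantity $(DU)(y;y')$ is positive for all $y,y'$: this is the tacit non-degeneracy hypothesis accompanying the positive-rate setting (the paper itself invokes it immediately before stating the corollary). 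On configurations where $(DU)(y;y')=0$ the HZMC would place no mass on $(y,y')$ and both sides of \eqref{eq:inv} reduce trivially, so this case is harmless.

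The main step is then purely mechanical. Dividing \eqref{eq:inv} through by $(DU)(y;y')$, the left-hand side becomes $T^S(y,y';z)$ by~\eqref{eq:TS}, and each summand on the right becomes $T^R(y,y';x)\, T(y,x,y';z)$ by~\eqref{eq:TR}. Hence \C~\ref{cond:inv} is equivalent to $T^S(y,y';z) = \sum_{x \in E} T^R(y,y';x)\, T(y,x,y';z)$, which is \C~\ref{cond:inv2}. The reverse implication is obtained by multiplying back through by $(DU)(y;y')$. Combined with Lemma~\ref{lem:inv}, this yields the claimed equivalence. I do not anticipate any genuine obstacle: the whole argument is an algebraic repackaging via division by a strictly positive quantity, the only subtlety being the identification of $T^S$ and $T^R$ with the transition matrices of the respective unique invariant SPCA, which Proposition~\ref{prop:CM15} supplies directly.
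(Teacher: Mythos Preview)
Your proposal is correct and follows exactly the paper's own reasoning: the paper derives \C~\ref{cond:inv2} from \C~\ref{cond:inv} simply by dividing both sides of~\eqref{eq:inv} by $(DU)(y;y')$ and identifying the resulting quotients with $T^S$ and $T^R$ via~\eqref{eq:TS} and~\eqref{eq:TR}, then invokes Lemma~\ref{lem:inv}. Your added remarks on the uniqueness of $\PCA{A^S}$, $\PCA{A^R}$ (via Proposition~\ref{prop:CM15}) and on the positivity of $(DU)(y;y')$ make explicit points the paper leaves implicit, but the core argument is identical.
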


The main idea to prove Theorems~\ref{thm:3col} and~\ref{thm:2col} is to find, for a fixed transition matrix $T$ from $E^3$ to $E$, all the pair of transition matrices $(T^S,T^R)$ from $E^2$ to $E$ such that \C~\ref{cond:inv2} is satisfied, and then verify if $(T^S,T^R)$ satisfies (or not) the other wanted properties: conservation of a $(D,U)$-HZMC and of a $(U,D)$-HZMC thanks to Theorem~\ref{thm:CM15}. In the particular cases where $D=U$ or $E=\{0,1\}$, we are able to find a unique possible pair of $(T^S,T^R)$ related to $T$ that can satisfy~\C~\ref{cond:inv2}. All other cases are open problems.

\subsubsection{Proof of Theorem~\ref{thm:3col}} \label{sec:t3col}
Let $T$ be a t.m.\ from $E^3$ to $E$ of a TPCA with positive rate. We denote, for any $y,y'$, $(\tilde{T}(y,y';x) : x \in E)$, the unique left eigenvector related to the eigenvalue~$1$ of $\left(T(y,x,y';z) : x \in E, z \in E\right)$ normalized such that $\sum_{x\in E} \tilde{T}(y,y';x) = 1$, i.e.\ for any $y,y'$,
\begin{equation} \label{eq:p3col}
\tilde{T}(y,y';z) = \sum_{x \in E} \tilde{T}(y,y';x) T(y,x,y';z)
\end{equation}
and $\tilde{T}$ is a t.m.\ from $E^2$ to $E$, this eigenvector exists due to Perron-Frobenius theorem.
Moreover, we suppose that $\tilde{T}$ satisfies \C~\ref{cond:CM15-1} and~\C~\ref{cond:CM15-2} of Theorem~\ref{thm:CM15} with $D^\eta=U^\eta$, i.e.\ there exists $D^\eta$ such that the $(D^\eta,D^\eta)$-HZMC distribution is an i.p.m.\ of $\PCA{\tilde{A}}$, the SPCA with t.m.\ $\tilde{T}$. In this case, we remark that SPCAs that let invariant $(D,U)$-HZMC and $(U,D)$-HZMC are the same, i.e.\ $T^R = T^S = \tilde{T}$ in Corollary~\ref{cor:TSPCA} and so \eqref{eq:p3col} imply~\C~\ref{cond:inv2}. We finish the proof using Corollary~\ref{cor:TSPCA}.\par
\smallskip
Reversely, if the $(D,D)$-HZMC distribution is an i.p.m.\ of $T$, by Lemma~\ref{lem:inv}, for any $y,y',z \in E$,
\begin{displaymath}
D(y;z) D(z;y') = \sum_{x \in E} D(y;x) D(x;y') T(y,x,y';z),
\end{displaymath}
i.e.\ for any $y,y'$, $\left(D(y;x)D(x;y') : x\in E\right)$ is a left eigenvector of $\left(T(y,x,y';z) : x \in E, z \in E\right)$ associated to the eigenvalue~$1$. By Perron-Frobenius theorem, the eigenspace associated to eigenvalue~$1$ is of dimension~$1$. And, so, for any $y,y',x$, $D(y;x)D(x;y') = \lambda_{y,y'} \tilde{T}(y,y';x)$. Moreover, as we want that $(\tilde{T}(y,y';x) : x \in E)$ is a probability vector, we obtain
\begin{displaymath}
\tilde{T}(y,y';x) = \frac{D(y;x)D(x;y')}{(DD)(y;y')}.
\end{displaymath}
Hence, by Proposition~\ref{lem:inv}, the $(D,D)$-HZMC distribution is an i.p.m.\ of SPCA $\PCA{\tilde{A}}$ of t.m.\ $\tilde{T}$. And so, by Theorem~\ref{thm:CM15}, $\PCA{\tilde{A}}$ needs to satisfy \C~\ref{cond:CM15-1} and~\C~\ref{cond:CM15-2} with $D=U$.

\subsubsection{Proof of Theorem~\ref{thm:2col}} \label{sec:t2col}
In the case $E=\{0,1\}$, we have the following algebraic property on $T^S$ and $T^R$.
\begin{lemma} \label{lem:2col}
Let $E=\{0,1\}$ and $T^S$ and $T^R$ two transition matrices from $E^2$ to $E$ such that the $(D,U)$-HZMC is an invariant probability measure of $\PCA{A^S}$ with transition matrix $T^S$ and the $(U,D)$-HZMC is an invariant probability measure of $\PCA{A^R}$ with transition matrix $T^R$. Then, for any $y,y',x \in E$,
\begin{equation} \label{eq:TRS}
T^S(y,y';x) = T^R(y',y;x).
\end{equation}
\end{lemma}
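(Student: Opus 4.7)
The plan is to derive explicit formulas for $T^S$ and $T^R$ via Proposition~\ref{prop:CM15}, and then exploit the fact that in the binary alphabet every $2\times 2$ stochastic matrix is automatically reversible with respect to its stationary distribution.

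First, I would apply Proposition~\ref{prop:CM15} to the SPCA $\PCA{A^S}$ preserving the $(D,U)$-HZMC. This forces both \C~\ref{cond:CM15T} and \C~\ref{cond:CM15DU}, giving
\begin{equation*}
T^S(y,y';x) = \frac{D(y;x)\,U(x;y')}{(DU)(y;y')}\quad \text{and}\quad DU = UD.
\end{equation*}
Applying the same proposition to $\PCA{A^R}$ (with the roles of $D$ and $U$ swapped) yields $T^R(y,y';x) = U(y;x)D(x;y')/(UD)(y;y')$, hence
\begin{equation*}
T^R(y',y;x) = \frac{U(y';x)\,D(x;y)}{(DU)(y';y)}
\end{equation*}
(using $UD = DU$). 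So the claim reduces to the identity
\begin{equation*}
D(y;x)\,U(x;y')\,(DU)(y';y) \;=\; U(y';x)\,D(x;y)\,(DU)(y;y')
\end{equation*}
for all $y, y', x \in \{0,1\}$.

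The key step is the observation that in the binary case $E=\{0,1\}$, the commutation relation $DU = UD$ is equivalent (by a direct computation on the off-diagonal entries) to the equality of the unique stationary distributions of $D$ and $U$; denote this common distribution by $\pi$. Moreover, any $2\times 2$ stochastic matrix satisfies detailed balance with respect to its stationary distribution — this is a single scalar check — so $D$, $U$, and the product $DU$ are all $\pi$-reversible.

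Using $\pi$-reversibility of $D$ and then of $U$, one gets
\begin{equation*}
\pi(y)\,D(y;x)\,U(x;y') \;=\; \pi(x)\,D(x;y)\,U(x;y') \;=\; \pi(y')\,D(x;y)\,U(y';x),
\end{equation*}
so $D(y;x)U(x;y') = \frac{\pi(y')}{\pi(y)} D(x;y) U(y';x)$; and $\pi$-reversibility of $DU$ gives $(DU)(y';y) = \frac{\pi(y)}{\pi(y')}(DU)(y;y')$. Multiplying these two identities makes the ratios $\pi(y')/\pi(y)$ cancel, producing exactly the required equality. The only conceptual obstacle is spotting that the binary hypothesis is precisely what forces the reversibility of every $2\times 2$ stochastic matrix; once this is noted, the argument collapses to a one-line cancellation. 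Degenerate cases where some denominator vanishes (i.e.\ $D$ or $U$ has a null row sum, etc.) can be handled by a continuity argument or, equivalently, by observing that the corresponding configurations have zero probability under the HZMC.
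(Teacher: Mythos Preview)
Your argument is correct and takes a genuinely different route from the paper's proof. The paper proceeds by direct case analysis: it first disposes of the diagonal case $y=y'$ by noting that $T^S(y,y;y)=T^R(y,y;y)$ immediately from~\eqref{eq:TS}--\eqref{eq:TR}, and then handles $y=0$, $y'=1$ by expanding $(DU)(0;0)=(UD)(0;0)$ to get the scalar identity $U(0;1)D(1;0)=D(0;1)U(1;0)$, from which it deduces $U(0;1)(UD)(1;0)=U(1;0)(DU)(0;1)$ and hence the claimed equality for $T^S(0,1;0)$. Your approach replaces this explicit manipulation by the structural observation that on a two-letter alphabet every stochastic matrix is automatically reversible with respect to its stationary law, and that $DU=UD$ forces $D$, $U$ (and hence $DU$) to share a common stationary $\pi$; the identity then drops out of a single detailed-balance cancellation valid for all $y,y',x$ simultaneously. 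Your argument is more conceptual and explains \emph{why} the binary hypothesis is essential, whereas the paper's proof is shorter and entirely self-contained without invoking stationary distributions. One small caution: your equivalence ``$DU=UD$ $\Leftrightarrow$ $\pi_D=\pi_U$'' tacitly assumes both matrices have a unique stationary law (it fails as stated if, say, $D$ is the identity); this is covered by your closing remark on degenerate cases, but it would be cleaner to note that when one matrix is the identity the lemma is trivial anyway.
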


\begin{proof}
As $\PCA{A^S}$ lets invariant the $(D,U)$-HZMC distribution, by Pro\-position~\ref{prop:CM15}, for any $y,x,y'$, \eqref{eq:TS} holds and, due to similar reason, for any $y,x,y'$, \eqref{eq:TR} holds too.\par
When $y=y'=x$, by~\eqref{eq:TS} and~\eqref{eq:TR}, $T^S(y,y;y) = T^R(y,y;y)$, and, moreover, as $T^S(y,y;.)$ and $T^R(y,y;.)$ are probability measures on $\{0,1\}$, \eqref{eq:TRS} holds if $y=y'$.\par
Now, we look the more complicated case $y=0$ and $y'=1$ (case $y=1$ and $y'=0$ is similar replacing $S$ by $R$) and $x=0$ ($x=1$ will be then immediate because $T^S(0,1;0) + T^S(0,1;1) = 1 = T^R(1,0;0) + T^R(1,0;1)$). By Proposition~\ref{prop:CM15}, $DU=UD$, so $(UD)(0;0) = (DU)(0;0)$ that simplifies in $U(0;1) D(1;0) = D(0;1) U(1;0)$, that implies $U(0;1) (UD)(1;0) = U(1;0) (DU)(0;1)$ and, finally,
\begin{displaymath}
T^S(0,1;0) = \frac{D(0;0) U(0;1)}{(DU)(0;1)} = \frac{U(1;0) D(0;0)}{(UD)(1;0)} = T^R(1,0;0).
\end{displaymath}
\end{proof}

Now, we can prove Theorem~\ref{thm:2col}.\par
\begin{proof}[Proof of Theorem~\ref{thm:2col}] 
$\bullet$ Suppose that one of the i.p.m.\ of TPCA $\PCA{A}$ of t.m.\ $T$ with positive rates is a $(D,U)$-HZMC distribution and denote $\left( \tilde{T}(y,y';x) : x \in E\right)$ the left eigenvector related to the eigenvalue~$1$ of $\left(\sum_{k \in E} T(y',x,y;k) T(y,k,y';z) : x \in E, z \in E\right)$ and such that $\sum_{x \in E} T(y,y';x)=1$.\par By Lemma~\ref{lem:inv}, \C~\ref{cond:inv} holds. As $E=\{0,1\}$, by Lemma~\ref{lem:2col}, this condition rewrites: for any $y,y',z$,
\begin{equation}
T^{S}(y,y';z) = \sum_{x \in E} T^{S}(y',y;x) T(y,x,y';z).
\end{equation}
Applying this equation twice establishes that, for any $y,y',z \in \{0,1\}$,
\begin{equation}
\sum_{x \in E} T^{S}(y,y';x) \left(\sum_{u \in E} T(y',x,y;u) T(y,u,y';z) \right) = \sum_{x \in E} T^S(y',y;x) T(y,x,y';z) = T^S(y,y';z).
\end{equation}
In other words, for any $y,y'$, $\left(T^S(y,y';x) : x \in E \right)$ is a left eigenvector related to eigenvalue $1$ of $\left(\sum_{u \in E} T(y',x,y;u) T(y,u,y';z) : x \in E, z \in E \right)$. So, by Perron-Frobenius theorem, for any $y,x,y'$, $T^S(y,y';x) = \lambda_{y,y'} \tilde{T}(y,y';x)$ with $\lambda_{y,y'} =1$ because the sum in $x$ is equal to $1$ in both sides. Then, the TPCA $\PCA{\tilde{A}}$ with t.m.\ $\tilde{T} = T^S$ lets invariant the $(D,U)$-HZMC distribution. Hence, by Theorem~\ref{thm:CM15} or results of Belyaev~\cite{BGM69}, $\tilde{T}$ satisfies
\begin{equation} \label{eq:Belyaev}
\tilde{T}(0,0;0) \tilde{T}(0,0;1) \tilde{T}(1,0;0) \tilde{T}(0,1;0)  = \tilde{T}(1,1;1) \tilde{T}(1,1;0) \tilde{T}(0,1;1) \tilde{T}(1,0;1) 
\end{equation}
And so \C~\ref{cond:2col} holds by Lemma~\ref{lem:inv}. 

$\bullet$ Reversely, if $\tilde{T}$ satisfies \C~\ref{cond:CM15-1} (i.e.~\eqref{eq:Belyaev} if $E=\{0,1\}$), then we apply Theorem~\ref{thm:CM15} to find a pair $(D^\eta,U^\eta)$ such that the $(D^\eta,U^\eta)$-HZMC distribution is an i.p.m.\ of $\PCA{\tilde{A}}$ of t.m.\ $\tilde{T}$. If, moreover, this pair $(D^\eta,U^\eta)$ satisfies \C~\ref{cond:2col} then, by Lemma~\ref{lem:inv}, the $(D^\eta,U^\eta)$-HZMC is an i.p.m.\ of $\PCA{A}$.
\end{proof}

\subsubsection{Proofs of Proposition~\ref{prop:2col} and~\ref{prop:3col}} \label{sec:p2col}
\begin{proof}[Proof of Proposition~\ref{prop:2col}]
To prove Proposition~\ref{prop:2col}, we apply Theorem~\ref{thm:2col} to $\PCA{A_8}$. First, let us compute matrices 
\begin{displaymath}
\left(\sum_{k \in E} T(y',x,y;k) T(y,k,y';z) : x \in E, z \in E\right)
\end{displaymath}
for any $y,y'$. We obtain the following four matrices:
\begin{displaymath}
\begin{array}{c|c|c}
y \backslash y' & 0 & 1 \\
\hline
0 & \begin{pmatrix}
p^2 + (1-p)^2 & 2p(1-p) \\
2p(1-p) & p^2 + (1-p)^2
\end{pmatrix} &
\begin{pmatrix}
r^2 + (1-r)^2 & 2r(1-r) \\
2r(1-r) & r^2 + (1-r)^2
\end{pmatrix} \\
\hline
1 & 
\begin{pmatrix}
r^2 + (1-r)^2 & 2r(1-r) \\
2r(1-r) & r^2 + (1-r)^2
\end{pmatrix} &
\begin{pmatrix}
p^2 + (1-p)^2 & 2p(1-p) \\
2p(1-p) & p^2 + (1-p)^2
\end{pmatrix}
\end{array}
\end{displaymath}

Left eigenvectors related to eigenvalue $1$ are all equal for these four matrices and their common value is $\begin{pmatrix} 1/2 & 1/2 \end{pmatrix}$. So, we know have to study the SPCA whose t.m.\ is, for any $y,x,y' \in \{0,1\}$, $T(y,y';x) = 1/2$. We observe easily that this SPCA as for unique invariant HZMC distribution, the $(D,U)$-HZMC such that, for any $x,y \in \{0,1\}$, $D(x;y) = U(x;y)= 1/2$. Then, \C~\ref{cond:2col} holds for this pair $(D,U)$. We deduce, by Theorem~\ref{thm:2col}, that $\PCA{A_8}$ lets invariant this $(D,U)$-HZMC distribution and, moreover,  it is the unique HZMC distribution that is invariant by $\PCA{A_8}$.
\end{proof}

\begin{proof}[Proof of Proposition~\ref{prop:3col}]
To prove Proposition~\ref{prop:3col}, we check that \C~\ref{cond:inv} holds with $T$, the t.m.\ of $\PCA{A_6}$, and for any $(D,U)$ such that $D=U$ and, for any $i \in \ZZ/3\ZZ$, $D(i,i+1) = 1 - D(i,i-1) = q$. And, then, Remark~\ref{rem:inv} concludes the proof.
\end{proof}

\section{Conclusion} \label{sec:concl}
We have computed the edge correlation function of the 8-vertex model on $\overline{K}_\infty$ with free boundary conditions and $a+c=b+d$ and we have bounded the influence of being not in a free boundary conditions case.\par
Moreover, as stated in Proposition~\ref{prop:SKN}, edge correlation function of Theorem~\ref{thm:Cit} is the one of the 8-vertex model on $K_N$ with $a+c=b+d$ and free boundary conditions. If, instead of a rotation of an angle $-\pi/4$ to pass from the 8-vertex model on $K_N$ to the 8-vertex model on $\overline{K}_N$, we have done a rotation by an angle $\pi/4$, then we would have obtained the correlation function of the 8-vertex model on $K_N$ with $a+d = b+c$ and free boundary conditions.

\medskip
\bibliographystyle{abbrv}

\end{document}